\title[On Finding Small Hyper-Gradients in Bilevel Optimization]{On Finding Small Hyper-Gradients in Bilevel Optimization: \\
 Hardness Results and Improved Analysis}
\definecolor{mydarkgreen}{RGB}{39,130,67}
\definecolor{mydarkred}{RGB}{192,25,25}
\definecolor{bgcolor}{rgb}{0.93,0.99,1}
\def\eqref#1{equation~(\ref{#1})}
\def\Eqref#1{Equation~(\ref{#1})}
\def\1{\bf{1}}
\def\vone{{\bf{1}}}
\def\fA{{\mathcal{A}}}
\def\fB{{\mathcal{B}}}
\def\fL{{\mathcal{L}}}
\def\fM{{\mathcal{M}}}
\def\fN{{\mathcal{N}}}
\def\fO{{\mathcal{O}}}
\def\fP{{\mathcal{P}}}
\def\fR{{\mathcal{R}}}
\def\fS{{\mathcal{S}}}
\def\fT{{\mathcal{T}}}
\def\fX{{\mathcal{X}}}
\def\fY{{\mathcal{Y}}}
\def\sB{{\mathbb{B}}}
\def\BE{{\mathbb{E}}}
\def\BI{{\mathbb{I}}}
\def\BR{{\mathbb{R}}}
\DeclareMathOperator*{\argmin}{arg\,min}
\newtheorem{thm}{Theorem}[section]
\newtheorem{dfn}{Definition}[section]
\newtheorem{exmp}{Example}[section]
\newtheorem{lem}{Lemma}[section]
\newtheorem{asm}{Assumption}[section]
\newtheorem{rmk}{Remark}[section]
\newtheorem{prop}{Proposition}[section]
\def\Ddots{\mathinner{\mkern1mu\raise\p@
\vbox{\kern7\p@\hbox{.}}\mkern2mu
\raise4\p@\hbox{.}\mkern2mu\raise7\p@\hbox{.}\mkern1mu}}
\newcommand*{\rom}[1]{\expandafter\@slowromancap\romannumeral #1@}
\begin{document}

\maketitle 
\begingroup
\begin{NoHyper}
\renewcommand\thefootnote{${}^*$}
\footnotetext{Equal contributions.}
\end{NoHyper}
\begin{NoHyper}
\renewcommand\thefootnote{${}^\dagger$}
\footnotetext{The corresponding author.}
\end{NoHyper}
\endgroup
\begin{abstract}%
Bilevel optimization reveals the inner structure of otherwise oblique optimization problems, such as hyperparameter tuning, neural architecture search, and meta-learning. 
A common goal in bilevel optimization is to minimize a hyper-objective that implicitly depends on the solution set of the lower-level function. Although this hyper-objective approach is widely used, its theoretical properties have not been thoroughly investigated in cases where \textit{the lower-level functions lack strong convexity}. 
In this work, we first provide hardness results to show that the goal of finding stationary points of the hyper-objective for
nonconvex-convex bilevel optimization can be intractable for zero-respecting algorithms.
Then we study a class of tractable nonconvex-nonconvex bilevel problems when the lower-level function satisfies the Polyak-Łojasiewicz (PL) condition. We show a simple first-order algorithm can achieve complexity bounds of $\tilde{\mathcal{O}}(\epsilon^{-2})$,
$\tilde{\mathcal{O}}(\epsilon^{-4})$ and $\tilde{\mathcal{O}}(\epsilon^{-6})$ in the deterministic, partially stochastic, and fully stochastic setting respectively. 
The complexities in the first two cases are optimal up to logarithmic factors.
% Bilevel optimization reveals the inner structure of otherwise oblique optimization problems, such as hyperparameter tuning, neural architecture search, and meta-learning. 
% A common goal is to minimize a hyper-objective which implicitly depends on the solution set of the lower-level function. 
% Recent works have proposed various methods that have non-asymptotic convergence to a stationary point of the hyper-objective when the lower-level function is strongly convex, or more generally satisfies the Polyak-Łojasiewicz (PL) condition. We first 
% We first examine whether these conditions can be relaxed, and propose a hard instance with convex lower-level problems such that existing methods can get stuck. Our hand instance also applies to a more general class of zero-respecting algorithms for bilevel optimization. Then we focus on the complexity of nonconvex-PL bilevel problems, and show a simple first-order algorithm can achieve a complexity of $\tilde \fO(\epsilon^{-2})$,
% $\tilde \fO(\epsilon^{-4})$ and $\tilde \fO(\epsilon^{-6})$ in the deterministic, partially stochastic, and fully stochastic setting. The complexities in the first two cases are optimal up to logarithmic factors.
\end{abstract}

\begin{keywords}%
bilevel optimization, optimization theory, oracle complexity %
\end{keywords}

\section{Introduction}

The goal of bilevel optimization is to minimize the upper-level function $f(x,y)$ under the constraint that $y$ is minimized with respect to the lower-level function $g(x,y)$. Formally, it is defined as,
\begin{align} \label{prob:main}
    \min_{x \in \BR^{d_x},y \in Y^*(x) } f(x,y), \quad  Y^*(x) = \arg \min_{y \in \BR^{d_y}} g(x,y).
\end{align}
%where $\fY \subseteq \BR^{d_y}$ is 
%and the solution mapping
%$Y^*(x): \BR^{d_x} \rightrightarrows \fY$ is assumed to be a nonempty compact set for any $x\in\BR^{d_x}$. 
% We assume $f(x,y)$ and $g(x,y)$ is convex in $y$.
% , since minimizing a nonconvex function is in general NP-hard~\ref{} and thus 
Bilevel optimization in this form has received increasing attention due to its wide applications in many machine learning problems, including hyperparameter  tuning~\citep{franceschi2018bilevel,pedregosa2016hyperparameter}, neural architecture search~\citep{liu2018darts,wang2022zarts,zoph2016neural,zhang2021idarts}, meta-learning~\citep{franceschi2018bilevel,hospedales2021meta,ravi2017optimization,pham2021contextual,rajeswaran2019meta}, out-of-distribution learning~\citep{zhou2022model},
adversarial training~\citep{goodfellow2020generative,sinha2017certifying,lin2020gradient,lin2020near}, reinforcement learning~\citep{konda1999actor,hong2023two}, causal learning~\citep{jiang2022invariant,arjovsky2019invariant}. 

The hyper-objective approach \citep{dempe2002foundations} reformulates Problem (\ref{prob:main}) by a minimization problem defined below,
\begin{align} \label{hyper-refo}
    \min_{x \in \BR^d} \varphi(x), \text{ where } \varphi(x) = \min_{y \in Y^*(x) } f(x,y)
\end{align}
is called the hyper-objective. When $\varphi(x)$ has Lipschitz continuous gradients, a common is to find almost stationary points of $\varphi(x)$.

 Finding stationary points can be especially easy when the lower-level function is strongly convex, because \Eqref{hyper-refo} can be simplified to the composite optimization problem below. Specifically, since $Y^*(x)$ has only one element when the lower-level function is strongly convex, we have $ Y^*(x) = \{y^*(x)\}$ and
\begin{align} \label{prob:sc}
    \min_{x \in \BR^{d_x}} \varphi(x):= f(x,y^*(x)), ~{\rm where}~ y^*(x) = \arg \min_{y \in \BR^{d_y}} g(x,y).
\end{align}
Further, the implicit function theorem~\citep{dontchev2009implicit} implies
\begin{align} \label{grad-sc}
    \nabla \varphi(x) = \underbrace{\nabla_x f(x,y^*(x))}_{\text{Explicit Gradient}} + \underbrace{\left( \nabla y^*(x) \right)^\top \nabla_y f(x,y^*(x))}_{\text{Implicit Gradient}}.
\end{align}
This equation enables one to estimate the hyper-gradient $\nabla \varphi(x)$ and perform gradient descent on $\varphi(x)$.
AID~\citep{ghadimi2018approximation} and ITD~\citep{ji2021bilevel} estimate 
$\nabla \varphi(x)$ with Hessian-vector-product oracles. 
The more recently proposed F${}^2$BA~\citep{chen2023near} estimate $\nabla \varphi(x)$  with gradient oracles.
All these methods 
can find a stationary point of $\varphi(x)$. 

% Recently, \citet{kwon2023fully} proposed a 
% gradient-based method F and showed that this method is also capable of finding a stationary point of $\varphi(x)$. 
%\citet{chen2023near} further showed that F${}^2$BA hasthe same convergence rate as AID and ITD.
%Many works assume the strong convexity of lower-level problems, which ensures $\nabla \varphi(x)$ has an explicit expression as \Eqref{grad-sc}. 

However, due to the prevalence of nonconvex functions in real-world scenarios, the strong convexity assumption may limit the applicability of algorithms. Therefore, our work first aims to study whether convexity, as a relaxation of strong convexity, suffices for finding small hyper-gradients efficiently. Specifically, we hope to answer the question below.
\begin{center}
    \textit{Can we find stationary points  of $\varphi(x)$ when the lower level function $g(x,y)$ is (strictly) convex but not strongly convex in $y$?} 
\end{center}
% For single-level minimization problems, there exist many theoretical results for strongly convex, convex, and nonconvex functions.
% One first challenge to the above question comes from continuity.
We provide a negative answer to the above question. We first prove in 
Example \ref{exmp:disconti} that in the case where $g(x, \cdot)$ is merely convex, $\varphi(x)$ may not have stationary points since
$\varphi(x) $ may be discontinuous.
Furthermore, we show that the continuity of $\varphi(x)$ can be fully characterized by the Pompeiu–Hausdorff continuity of $Y^*(x)$
in Theorem \ref{thm:cont-cont}.

%$\varphi(x)$ can be discontinuous,

% {\color{red} zero-respecting/ formal ref / regular conditions.}

We then study the cases when $\nabla \varphi(x)$ exists, \textit{e.g.} when 
the lower-level function is strictly convex.  We demonstrate that the stationary points of $\varphi(x)$ may still be computationally hard for any zero-respecting algorithms. 
This algorithm class contains a broad range of existing algorithms, including all the algorithms mentioned in this paper.
\begin{thm}[Informal version of Theorem \ref{thm:NO}]
There exists a bilevel problem instance whose lower-level function is strictly convex and both $f,g$ satisfy regular smoothness conditions, such that any zero-respecting algorithm gets stuck at the initialization $x_0$.
% In this example, $\varphi(x)$ is a strongly convex quadratic. 
\end{thm}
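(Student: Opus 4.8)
The plan is to construct an explicit bilevel instance where the hyper-gradient $\nabla\varphi(x)$ vanishes at $x_0$ together with all its higher-order derivatives (or at least where every partial derivative that a zero-respecting algorithm could "discover" at $x_0$ is zero), while $\varphi$ itself is nonconstant away from $x_0$, so that the true stationary point is bounded away from $x_0$. The key tension is that $g$ must be strictly convex in $y$ (so $Y^*(x)=\{y^*(x)\}$ is a singleton and $\nabla\varphi$ exists via the implicit-function-type formula) yet the lower-level problem must be "flat" enough near the initialization that the coupling between $x$ and $y^*(x)$ is invisible to first-order (and all-order) queries at $x_0$.

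First I would recall that a zero-respecting algorithm, when all its function/gradient queries at a point return a zero vector (in the relevant coordinates), cannot leave that point: its iterate support never grows. So it suffices to build $f,g$ such that $\nabla_x f$, $\nabla_y f$, $\nabla_x g$, $\nabla_y g$, and the relevant Hessian blocks (whatever oracle the algorithm class is allowed) all evaluate to zero at $(x_0,y^*(x_0))$, while the global minimizer of $\varphi$ lies elsewhere. The standard device is a bump/flat function: let $\phi$ be a smooth function that is identically $0$ on a neighborhood of $0$ and strictly decreasing past it (a mollified version of the classical $e^{-1/t}$ construction), so that $\phi^{(k)}(0)=0$ for all $k$. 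Then take something like $g(x,y)=\tfrac12\|y - \phi(x)\vone\|^2 + h(y)$ with a small strictly convex perturbation $h$ ensuring strict (but not strong) convexity, and $f(x,y)$ designed so that $\varphi(x)=f(x,y^*(x))$ inherits the flat-then-decreasing shape of $\phi$. One checks $y^*(x)$ depends on $x$ only through $\phi(x)$, hence is locally constant near $x_0$, making the implicit gradient vanish there; and $\nabla_x f$ is arranged to vanish at $x_0$ as well. The regular smoothness conditions ($f,g$ with Lipschitz gradients/Hessians on the relevant domain) are then a matter of bounding the derivatives of the mollifier, which is routine.

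The main obstacle I anticipate is the combination of three requirements that pull in different directions: (i) $g(x,\cdot)$ must be \emph{strictly} convex — not merely convex — so that $\varphi$ is well-defined and differentiable (otherwise we are back in the discontinuous regime of Example~\ref{exmp:disconti}), yet (ii) not \emph{strongly} convex, since with strong convexity the cited algorithms (AID, ITD, F${}^2$BA) provably succeed, so the construction must exploit the degeneracy of the lower-level curvature near the relevant points; and (iii) the smoothness constants of $f$ and $g$ must be genuine finite constants on the whole space (or a fixed bounded region containing the trajectory), not blowing up. Reconciling (i) and (ii) likely forces the strict-convexity "witness" to be something like adding $\delta\|y\|^{2+\alpha}$ or $\delta\sum_i y_i^4$ — a function that is strictly convex but has vanishing Hessian at the origin — which is exactly the feature that lets $y^*(x)$ respond only super-linearly (hence flatly, to first order) to changes in $x$.

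Finally I would assemble the pieces: verify $Y^*(x)$ is a singleton for all $x$ (strict convexity), write down $y^*(x)$ in closed form or implicitly, compute $\varphi(x)$ and confirm $\varphi(x_0) > \min_x \varphi(x)$ with the minimizer at a definite distance, check that all oracle responses at $x_0$ are zero in the coordinates the algorithm tracks (so by the zero-respecting property the algorithm never moves), and bound the smoothness parameters. I would state the resulting gap $\varphi(x_0)-\inf_x\varphi(x)$ and the smoothness constants explicitly in the formal Theorem~\ref{thm:NO} so the lower bound is quantitative.
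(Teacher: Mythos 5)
Your central idea---engineer $f,g$ so that $\nabla\varphi$ and all of its higher derivatives vanish at $x_0$, so that a zero-respecting algorithm sees only zero gradients and cannot move---unfortunately undermines the very statement you are trying to prove. If $\nabla\varphi(x_0)=0$ then $x_0$ is already a stationary point of the hyper-objective, and the algorithm staying there is a \emph{success}, not a failure: Theorem~\ref{thm:NO} is about the inability to find points with small hyper-gradient, and the formal statement there crucially delivers $\varphi(x)=(x+1)^2/2$ with $\|\nabla\varphi(0)\|=1\ne 0$. You do gesture at the right alternative in your parenthetical (``or at least where every partial derivative that a zero-respecting algorithm could discover at $x_0$ is zero''), but your bump-function construction does not realize it: by setting $\phi(x_0)=0$, $\phi'(x_0)=0$ and $y_0=y^*(x_0)$, you simultaneously kill the explicit gradient, the implicit gradient, and hence $\nabla\varphi(x_0)$ itself, rendering the conclusion vacuous. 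Moreover, your $g$ depends on $x$ through $\phi$, whereas the hardness result is stated for the simplified Problem~(\ref{prob:sim-C}) in which $Y^*$ is a fixed set and the algorithm class of Algorithm~\ref{alg:class} is calibrated to that structure.

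The paper's mechanism is of an entirely different nature and is essentially a dimension-counting argument, not a flat-function argument. It fixes the budget $T,K$ first and then takes $d_y=q=2TK$, with $g(y)$ a strictly convex \emph{quadratic} built from a Nesterov-type worst-case zero-chain (Definition~\ref{worse:smooth}), so that each $y$-gradient query can expose at most one new coordinate. The upper level $f(x,y)=2(x+1)^2\sum_{j>q/2}\psi(y_{[j]})$ depends only on the last $q/2$ coordinates, and $\psi(0)=\psi'(0)=0$. Since the inner loop can activate at most $TK=q/2$ coordinates in total, every iterate keeps $y_{t,[j]}^k=0$ for $j>q/2$, hence $\nabla_x f(x_t,y_t^K)=4(x_t+1)\sum_{j>q/2}\psi(y_{t,[j]}^K)=0$ and $x_t$ never leaves $0$, even though $\nabla\varphi(0)=1$. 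The tension between strict and strong convexity is also resolved differently than you anticipate: no quartic perturbation $\delta\|y\|^4$ is used; the tridiagonal Hessian is positive definite but its smallest eigenvalue scales like $\fO(1/q^2)$, so conditioning deteriorates as the budget grows. Any correct proof must hide information in a way that scales with $T$ and $K$; a fixed-dimensional construction with a flat $\phi$ cannot defeat an arbitrary zero-respecting algorithm.
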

% {\color{red} the proof breaks}
% We will formally define the algorithm class $\fA$ later and it includes most algorithms that are used in practice.  

% {in addition, regularization not helps}

% It forms a sharp contrast with single-level optimization,
% where one can easily reduce convex optimization to a strongly convex one by adding a small regularizer (ref xxx). Our hardness result provides a clear separation for bilevel optimization with and without lower-level strong convexity.

% {\red add more explanations to connect these two parts. }

Given the negative result, we aim to study intermediate function classes that are not strongly convex but computationally tractable. In particular, we study the cases when the lower-level function satisfies the Polyak-Łojasiewicz (PL) conditions. This condition allows global nonconvexity but ensures local strong convexity uniformly in a subspace. 

The  PL condition can pose nontrivial challenges since we have neither \Eqref{prob:sc} nor (\ref{grad-sc}) in this case.
Researchers have provided novel analyses in this case.
\citet{xiao2023generalized} proposed a Hessian-vector-product-based method GALET with non-asymptotic convergence to KKT points of the gradient-based reformulated problem (\Eqref{grad-refo}) when $g$ is PL in $y$. 
\citet{kwon2023penalty} proved the differentiability of $\varphi(x)$ when the penalty function $\sigma f+g$ is uniformly PL in $y$ for all $\sigma$ in the neighborhood of zero, and 
showed a proximal variant of F${}^2$BA, which we call Prox-F${}^2$BA, can find a stationary point of $\varphi(x)$. Based on their differentiability result, we show that GALET also converges to a stationary point of $\varphi(x)$ in Appendix~\ref{apx:GALET}.

% More recently, \citet{kwon2023penalty} proposed Prox-F${}^2$BA which incorporates F${}^2$BA with Moreau-Yosida smoothing and showed this method can provably find the stationary points of $\varphi(x)$ when the lower-level function satisfies the
%  Polyak-Łojasiewicz (PL) condition, which is more general than strong convexity. 
% However, since a PL function is strongly convex in a subspace near the minimum,  this condition still seems to be somewhat restricted. 

Although Prox-F${}^2$BA \citep{kwon2023penalty} has been shown to converge to an $\epsilon$-stationary point of $\varphi(x)$ with $\tilde \fO(\epsilon^{-3})$ first-order oracle calls,
the rate is worse than the $\fO(\epsilon^{-2})$ optimal rate of gradient descent on nonconvex single-level optimization.
Therefore, listed as an important future direction, \citet{kwon2023penalty} asked the question below.
\begin{center}
\textit{Can one achieve the (near)-optimal rate for nonconvex-PL bilevel problems with gradient oracles?}
\end{center}
We give a positive answer to this question. 
% Interestingly, the algorithm that achieves this is 
% the same algorithm that is already near-optimal for the strongly convex case, namely F${}^2$BA~\citep{chen2023near}. 
% Interestingly, we prove that the original F${}^2$BA~\citep{chen2023near} can already achieve this fast convergence rate.
We show the F${}^2$BA~\citep{chen2023near} can already achieve this fast convergence rate with a sharp analysis.
Our improvement over \citet{kwon2023penalty} comes from establishing a tighter bound on the smoothness constant of $\varphi(x)$.

% by a tighter analysis of F${}^2$BA.

\begin{table*}[t]
    \centering
    \caption{
    We present the complexities of different methods for nonconvex-PL bilevel problems. 
   } 
    \label{tab:res}
    \begin{threeparttable}
    \begin{tabular}{c c c c c c}
    \hline 
    Oracle     & Method  &  Deterministic & \makecell[c]{Partially \\
    Stochastic } &
    \makecell[c]{Fully\\ Stochastic}
      & Reference  \\
    \hline \hline \addlinespace
    2nd & GALET~\tnote{\color{blue}(a)} & $\tilde \fO(\kappa^5 \epsilon^{-2})$ & - & - & \citet{xiao2023generalized} \\
    1st & Prox-F${}^2$BA~\tnote{\color{blue}(b)} & $\tilde \fO(\kappa^{p_1} \epsilon^{-3})$ &  $ \tilde \fO(\kappa^{p_2}  \epsilon^{-5})$ & $\tilde \fO(\kappa^{p_3} \epsilon^{-7})$ &   \citet{kwon2023penalty} \\
    1st & F${}^2$BA & $\tilde \fO(\kappa^4 \epsilon^{-2})$ & $ \tilde \fO(\kappa^6 \epsilon^{-4})$ &$\tilde \fO(\kappa^{12} \epsilon^{-6})$ & This Paper \\
     \hline
    \end{tabular}
    \begin{tablenotes}
    {\scriptsize   
     \item  {\color{blue} (a)}  Although \citet{xiao2023generalized} did not provide the dependency on $\kappa$ in the complexity, we can calculate by the way in our Remark \ref{rmk:comp-GALET}.
     Their analysis additionally requires the smallest singular value of $\nabla_{yy}^2 g(x,y)$ has a constant gap between zero, which makes $\nabla_{yy}^2 g(x,y)$ have a constant rank.
     \item {\color{blue} (b)} We use $p_1,p_2,p_3$ to denote the polynomial dependency in $\kappa$ since they are not provided by \citet{kwon2023penalty}.
    }
    \end{tablenotes}
    \end{threeparttable}
\end{table*}

\begin{thm}[Informal version of Theorem \ref{thm:YES}]
Under regular conditions as \citet{kwon2023penalty}, 
F${}^2$BA can provably find an $\epsilon$-stationary point of $\varphi(x)$ with $\tilde \fO(\epsilon^{-2})$ first-order oracle calls for nonconvex-PL bilevel problems.
\end{thm}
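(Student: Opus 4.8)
**Proof proposal for Theorem~\ref{thm:YES} (informal: F${}^2$BA achieves $\tilde{\mathcal{O}}(\epsilon^{-2})$ for nonconvex-PL bilevel problems).**

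The plan is to follow the penalty-function route of \citet{kwon2023penalty} but to sharpen the key smoothness estimate, which is where their extra factor of $\epsilon^{-1}$ comes from. Recall that F${}^2$BA replaces the bilevel problem by the penalized surrogate $\mathcal{L}_\sigma(x,y,z) = \frac{1}{\sigma}\big(g(x,y) - g(x,z)\big) + f(x,y)$ (or the analogous two-variable penalty $f + \frac{1}{\sigma} g$ after subtracting the optimal lower value), and runs gradient descent/ascent on it with a suitably small penalty parameter $\sigma \asymp \epsilon$. The first step is to recall from \citet{kwon2023penalty} the differentiability of $\varphi$ and of the penalty value function $\varphi_\sigma(x) := \min_y \max_z \mathcal{L}_\sigma(x,y,z)$ under the uniform-PL assumption on $\sigma f + g$, together with the approximation bound $\|\nabla\varphi_\sigma(x) - \nabla\varphi(x)\| = \mathcal{O}(\sigma)$ and $|\varphi_\sigma(x)-\varphi(x)| = \mathcal{O}(\sigma)$. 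Then it suffices to (i) show $\varphi_\sigma$ is $L_\sigma$-smooth with $L_\sigma = \tilde{\mathcal{O}}(\mathrm{poly}(\kappa))$ \emph{independent of $\sigma$} (this is the crux), (ii) control the inner-loop error in solving $\min_y\max_z \mathcal{L}_\sigma$ to accuracy depending on $\sigma$, which is where the PL condition of $\sigma f + g$ gives a linear rate with condition number $\tilde{\mathcal{O}}(\kappa)$, and (iii) assemble the standard nonconvex descent recursion: with $L_\sigma = \Theta(\kappa^c)$ constant in $\sigma$, gradient descent on $\varphi_\sigma$ finds an $\mathcal{O}(\sigma)$-stationary point of $\varphi_\sigma$ in $\mathcal{O}(L_\sigma \Delta \epsilon^{-2})$ outer steps, each costing $\tilde{\mathcal{O}}(\kappa)$ inner gradient steps, yielding the claimed $\tilde{\mathcal{O}}(\kappa^4 \epsilon^{-2})$.

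The heart of the improvement is step (i). The previous analysis bounds $L_\sigma$ by something like $\mathcal{O}(1/\sigma)$ because it differentiates $\nabla\varphi_\sigma$ naively through the implicit solution maps $y_\sigma^*(x), z_\sigma^*(x)$ of the penalized saddle problem, and the Hessian of $\mathcal{L}_\sigma$ in $(y,z)$ carries a $1/\sigma$ factor, so the implicit-function formula for $\nabla y_\sigma^*$ appears to blow up. The key observation I would exploit is cancellation: writing $\nabla\varphi_\sigma(x) = \nabla_x \mathcal{L}_\sigma(x, y_\sigma^*(x), z_\sigma^*(x))$ and differentiating again, the $1/\sigma$-singular parts coming from the $g$-terms in $y_\sigma^*$ and in $z_\sigma^*$ partially cancel because $g$ enters $\mathcal{L}_\sigma$ with opposite signs at $y$ and $z$, and because at the penalized saddle $y_\sigma^*$ and $z_\sigma^*$ are $\mathcal{O}(\sigma)$-close. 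More precisely, I would Taylor-expand: $z_\sigma^*(x)$ is the exact lower-level minimizer plus an $\mathcal{O}(\sigma)$ correction driven by $\nabla_y f$, and $y_\sigma^*(x)$ is the minimizer of $\sigma f + g$; both have $x$-Jacobians that are bounded by $\tilde{\mathcal{O}}(\kappa)$ after the $1/\sigma$ factors are absorbed against the PL/strong-convexity-in-a-subspace constant of $\sigma f + g$ (which itself scales like $1$, not $1/\sigma$, since we divided $\mathcal{L}_\sigma$ appropriately). Carrying this expansion through the second derivative — and here the PL condition must be used in its ``local strong convexity uniformly on a subspace'' form, restricting all inverse-Hessian operators to the range of $\nabla_{yy}^2 g$ via the Moore--Penrose pseudoinverse as in \citet{xiao2023generalized,kwon2023penalty} — gives $\|\nabla^2 \varphi_\sigma(x)\| = \tilde{\mathcal{O}}(\mathrm{poly}(\kappa))$ with no $\sigma$ dependence.

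The main obstacle I anticipate is exactly making this cancellation rigorous under the PL (rather than strong convexity) assumption: the inverse-Hessian operators are only pseudoinverses on a subspace, that subspace may move with $x$, and one must verify that the $\mathcal{O}(\sigma)$-closeness of $y_\sigma^*$ and $z_\sigma^*$ holds not just in value but in a way compatible with differentiating the projection onto the (varying) range space. I would handle this by invoking the uniform-PL assumption to guarantee a uniform spectral gap of $\nabla_{yy}^2 g$ bounded away from its kernel, so that the relevant eigenprojections are smooth in $x$ with $\tilde{\mathcal{O}}(\kappa)$-Lipschitz constants, and then estimate each term of the expanded second derivative separately, tracking powers of $\kappa$. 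A secondary, more routine obstacle is the stochastic extensions: once $L_\sigma$ is $\sigma$-free, the partially- and fully-stochastic rates $\tilde{\mathcal{O}}(\epsilon^{-4})$ and $\tilde{\mathcal{O}}(\epsilon^{-6})$ follow by plugging the variance-controlled inner solver (SGD with the PL linear rate, or nested SGD for the fully stochastic case) into the same outer recursion, though the $\kappa^{11}$ in the fully stochastic row signals that propagating variance through two stochastic levels and the penalty will require careful bookkeeping of $\kappa$ powers rather than any new idea.
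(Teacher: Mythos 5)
Your high-level strategy is aligned with the paper's: use the value-function penalty $\varphi_\sigma$, recall the $\fO(\sigma)$-closeness of $\nabla\varphi_\sigma$ and $\nabla\varphi$ from \citet{kwon2023penalty}, establish a $\sigma$-independent smoothness constant, and plug into the standard nonconvex descent recursion with inner loops converging linearly under the PL condition. The conclusion ($\tilde\fO(\kappa^4\epsilon^{-2})$) is correct. But the specific route to the crucial smoothness bound differs from the paper's in two substantive ways, and the paper's version sidesteps precisely the difficulties you flag as potential obstacles. First, the paper does \emph{not} prove that $\varphi_\sigma$ is $\fO(1)$-smooth; it proves (Lemma~\ref{lem:grad-Lip}) that the original hyper-objective $\varphi$ has $\fO(\ell\kappa^3)$-Lipschitz gradients, and then runs the descent lemma directly on $\varphi$ while treating $\hat\nabla\varphi(x_t)$ as a biased estimator whose bias $\Vert\nabla\varphi_\sigma-\nabla\varphi\Vert=\fO(\sigma\ell\kappa^3)$ is absorbed into the $\fO(\epsilon)$ error by choosing $\sigma\asymp\epsilon/(\ell\kappa^3)$. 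This means no $1/\sigma$ cancellation ever needs to be carried through a second derivative. Second, the paper's smoothness proof is a \emph{two-point Lipschitz argument}, not a differentiation argument: it writes the explicit formula $\nabla\varphi(x)=\nabla_x f - \nabla_{xy}^2 g\,(\nabla_{yy}^2 g)^\dagger\nabla_y f$ at $x_1$ and at $x_2$, and bounds the difference using the algebraic pseudoinverse identity $U(A^\dagger-B^\dagger)V = UA^\dagger(B-A)B^\dagger V$ (Lemma~\ref{lem:UV}, valid under the range conditions of Lemma~\ref{lem:space}) together with the spectral lower bound $\lambda_{\min}^+\ge\mu$ from the PL condition (Lemma~\ref{lem:PL-singular}) and the Pompeiu--Hausdorff Lipschitzness of $Y^*(x)$ (Lemma~\ref{lem:set-Lip}). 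This completely avoids the obstacle you raise --- having to differentiate through a moving range space, or to justify that the eigenprojection of $\nabla_{yy}^2 g$ is smooth in $x$ via perturbation theory, or even that $y_\sigma^*(x)$ is single-valued/differentiable (under PL the solution set is generically set-valued). Your route of proving $\nabla^2\varphi_\sigma=\tilde\fO(\mathrm{poly}(\kappa))$ via Taylor expansion and cancellation is plausible and can likely be made to work with Kato-type perturbation estimates, but it demands more regularity than the problem actually needs; the algebraic two-point comparison is both more elementary and more robust, and is what lets the paper work directly with $\varphi$ and the pseudoinverse without ever assuming the range space varies smoothly.
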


We also extend our analysis to stochastic bilevel problems when an algorithm only has access to a noisy estimator of $\nabla f$ and $\nabla g$. Under the partially stochastic setting when the noise is only in $\nabla g$, we prove the stochastic F${}^2$BA has the $\tilde \fO(\epsilon^{-4})$ first-order oracle complexity, which is also near-optimal for stochastic optimization~\citep{arjevani2023lower}. Under the more general fully stochastic setting when noise appears both in $\nabla f$ and $\nabla g$, we prove the method has an $\tilde \fO(\epsilon^{-6})$ complexity.

Compared with \citet{xiao2023generalized}, our deterministic F${}^2$BA achieves the same $\tilde \fO(\epsilon^{-2})$ rate without the assistance of Hessian-vector-product oracles. Our method also has a better dependency on $\kappa$ since we do not use a squared trick on the Hessian of $g$. And we additionally study stochastic problems that have not been studied by \citet{xiao2023generalized}. Compared with \citet{kwon2023penalty}, we strictly improve the complexities for both deterministic and stochastic cases under the same assumptions.
We compare our results with prior works in Table \ref{tab:res}, and leave a more detailed introduction of related works in Appendix \ref{apx:related}.

% The above convergence rate matches that of GALET~\citep{xiao2023generalized} on $\epsilon$, with only first-order information and a better dependency on the condition number $\kappa$.
%  which improves the prior best-known result of $\tilde \fO(\epsilon^{-7})$ by \citet{kwon2023penalty}.
% The algorithm to achieve this near-optimal rate simply applies gradient descent on a penalty function, known as 
% F${}^2$BA (Fully First-order Bilevel Approximation).  
% Due to the method relying only on gradient oracles, it is more suitable for large-scale bilevel problems compared to previous methods that require Hessian-vector product oracles~\citep{ji2021bilevel}. There have been numerous experimental studies demonstrating the effectiveness of this approach~\citep{liu2022bome,liu2021value,kwon2023fully,shen2023penalty}.
% However, to the best of our knowledge, we are the first one to show such a strong theoretical guarantee.

% When stochastic noise ins

% Extenstion to stochastic. Near optimal when $g$ has noi noise. State of art when fully stochastic.

\paragraph{Notations.} Throughout this paper, we use $\Vert \cdot\Vert $ to denote the $\ell_2$-norm of a vector or the operator norm of a matrix. We use $\sB_{\delta}(z) = \{z': \Vert z' - z \Vert \le \delta \}$ to denote  the $\ell_2$-ball centered at $z$ with radius $\delta$. %We use notations $\fO(\,\cdot\,),~ \tilde \fO(\,\cdot\,),~ \Omega(\,\cdot\,),~ \asymp$ as follows: 
% Given two functions $p: \BR^+ \rightarrow \BR^+$ and $q: \BR^+ \rightarrow \BR^+$,  $p(x) = \fO(q(x))$ means $ \lim \sup_{x \rightarrow +\infty} {p(x)}/{q(x)} < +\infty $; $p(x) = \tilde \fO(p(x)) $ means there exists some positive integer $k \ge 0$ such that $p(x) = \fO(q(x) \log^k (q(x)))$, $p(x) = \Omega(q(x))$ means $ \lim \sup_{x \rightarrow + \infty} p(x) / q(x) >0$, and $p(x) \asymp q(x)$ means we both have $p(x) = \fO(q(x))$ and $p(x) = \Omega(q(x))$.
We use notation $\tilde \fO(\,\cdot\,)$ to hide logarithmic factors in notation $\fO(\,\cdot\,)$.
For a matrix $A$, we use $A^\dagger$ to denote the  Moore–Penrose inverse. Notations ${\rm Ker}(A) = \{ x:Ax=0\}$ and ${\rm Range}(A) = \{Ax \}$ denote the kernel space and range space of $A$, respectively. For a vector $v$, we use the subscript $v_{[j]}$ to denote its $j$-th coordinate.

% %we denote the LL solution mapping as $Y^*(x) = \arg \min_{y \in \fY} g(x,y)$, the LL value function as $g^*(x) = \min_{y \in \fY} g(x,y)$, and the hyper-objective as $\varphi(x) = \min_{y \in Y^*(x)} f(x,y)$. If $\varphi(x)$ has a finite minimum, we denote $\varphi^* = \inf_{x \in \BR^{d_x}} \varphi(x)$. We use $\Vert \cdot\Vert $ to denote the $\ell_2$-norm of a vector, and $z_{[j]}$ to denote  the $j$-th coordinate of vector $z$. We use $\sB_{\delta}(z) = \{z': \Vert z' - z \Vert \le \delta \}$ to denote  the $\ell_2$-ball centered at $z$ with radius $\delta$. We let $\sigma_{\max}(A)$ 
% to be the largest singular value of matrix $A$, and
% $\sigma_{\rm min}^+(A)$ to be its smallest non-zero singular value.

\section{Preliminaries}

This section presents some basic definitions that are commonly used in optimization~\citep{nesterov2018lectures}.
To start with, the following definitions describe different orders of smoothness and levels of convexity for a function.
\begin{dfn}
We say an operator $\fT(x): \BR^{d_1} \rightarrow \BR^{d_2 \times d_3}$ is $C$-Lipschitz for some $C > 0$  if
\begin{align*}
    \Vert \fT(x) - \fT(x') \Vert \le C \Vert x - x'\Vert, \quad \forall  x, x' \in \BR^{d_1}.
\end{align*}
For a function $h(x): \BR^d \rightarrow \BR$, we say it has $L$-Lipschitz gradients (or it is $L$-smooth) if it is differentiable and $\nabla h(x)$ is $L$-Lipschitz; we say it has $\rho$-Lipschitz Hessians if it is twice differentiable and $\nabla^2 h(x)$ is $\rho$-Lipschitz.
\end{dfn}

% , and follows the convention in the optimization literature. 
% \begin{dfn}
%     We say a function $h(x) : \BR^d \rightarrow \BR$ is $C$-Lipschitz for some $C > 0$  if 
%     \begin{align*}
%         \vert h(x) - h(x') \vert \le C \Vert x - x'\Vert, \quad \forall  x, x' \in \BR^d.
%     \end{align*}
%     % where $C>0$.
% \end{dfn}
% \begin{dfn}
%     We say a function $h(x): \BR^d \rightarrow \BR$ has $L$-Lipschitz gradients for some $L>0$ if it is differentiable and $\nabla h(x)$ is $L$-Lipschitz.
%     Alternatively, we also call the function is $L$-smooth.
% \end{dfn}
% \begin{dfn}
%     We say a function $h(x) : \BR^d \rightarrow \BR$ has $\rho$-Lipschitz Hessians for some $\rho > 0$ 
%  if it is twice differentiable and $\nabla^2 h(x)$ is $\rho$-Lipschitz.
%     % where $\rho>0$.
% \end{dfn}
\begin{dfn}
 We say a function $h(x): \BR^d \rightarrow \BR$ is $\mu$-strongly convex for some $\mu > 0$  if for any $x,x' \in \BR^d$ and $t \in (0,1)$, we have that
 % \begin{align*}
 %     h(x') \ge h(x) + \nabla h(x)^\top (x' -x) + \frac{\mu}{2} \Vert x - x'\Vert^2, \quad \forall x, x' \in \BR^d,
 % \end{align*}
 \begin{align*}
     h(tx+(1-t)x') \le t f(x) + (1-t) h(x') - \frac{1}{2} \mu t(1-t) \Vert x - x' \Vert^2.
 \end{align*}
 We say $h(x)$ is convex if $\mu=0$.
\end{dfn}
As one relaxation of the above strong convexity condition, the Polyak-Łojasiewicz (PL) condition,  independently introduced by \citet{polyak1967general} and \citet{lojasiewicz1963topological}, is formally defined as follows.
\begin{dfn} \label{dfn:PL}
We say a function $h(x) : \BR^d \rightarrow \BR$ is $\mu$-PL for some $\mu > 0$ if it has a non-empty solution set and for any $ x \in \BR^d$ it holds that $\Vert \nabla h(x) \Vert^2 \ge 2 \mu \left( h(x) - \min_{x \in \BR^d } h(x) \right)$.
% where $\mu>0$.
\end{dfn}
Compared to the strong convexity, the PL condition allows nonconvexity and multiple minima. 
%n important property is that a stationary point is also a global minimum, and therefore PL functions have no spurious minima. 
The PL condition has wide applications in establishing global convergence of many nonconvex learning problems, including neural network training~\citep{charles2018stability,liu2022loss,hardt2016identity,li2018algorithmic} and optimal control ~\citep{fazel2018global}.

Recall our goal is to minimize the hyper-objective $\varphi(x)$. Since $\varphi(x)$ is typically nonconvex in bilevel optimization, the common goal is to find an $\epsilon$-stationary point, defined as follows.
\begin{dfn}
    We say $x$ is an $\epsilon$-stationary point of a differentiable function $\varphi(x)$ if $\Vert \nabla \varphi(x) \Vert \le \epsilon$.
\end{dfn}
% We are now ready to state our hardness result in the next section. 
% Throughout this paper, we assume $\inf_{x \ in \BR^{d_x}} \varphi(x) > -\infty$.

\section{Negative Results for General Convex Lower-Level Functions} 

This section formally describes the challenges for bilevel optimization without lower-level strong convexity assumption. 
In Section \ref{sec:cont-cont}, we show that $\varphi(x)$ may not have stationary points and analyze the underlying reasons behind it. In Section \ref{sec:NO}, we demonstrate that even if a stationary point of $\varphi(x)$ exists, a zero-respecting algorithm may not be able to find it within a finite time.

% One natural way to tackle BLO without lower-level strong convexity is to add a small strongly convex regularizer and then apply an algorithm designed for the strongly convex case \citep{rajeswaran2019meta}.
% However, one can show that the regularization transforms $Y^*(x)$ from a set to a singleton, thus breaking the original problem structure (see Remark 4 \citep{liu2022general})
% As we will see in Section \ref{sec:cont-cont}, the hyper-objective $\varphi(x)$ can be discontinuous 
% when the lower-level function only has convexity.
% Furthermore, we show that its continuity (of different types) can be fully characterized by the continuity
% of the solution mapping $Y^*(x)$ in Hausdorff distance. Then in Section \ref{sec:NO}, we show that even $\nabla \varphi(x)$ is guaranteed to exist when the lower-level function is strictly convex, the goal of finding a stationary point of $\varphi(x)$ may still be intractable. 

% Failure of regularization xxx.

\subsection{Stationary Points May not Exist } \label{sec:cont-cont}

The following example shows that when the lower-level function only has convexity, $\varphi(x)$ (\Eqref{hyper-refo}) can be discontinuous and has no stationary points.

\begin{exmp}[\citet{lucchetti1987existence}] \label{exmp:disconti}
Consider a bilevel problem as Problem (\ref{prob:main}) with $d_x = 1$, $d_y = 1$.
Let $f(x,y) = x^2 + y^2$, $g(x,y) = xy + I_{C}(y)$, where $I_{C}(\,\cdot\,)$ is the indicator function of the set $ \{ y: 0 \le y \le 1 \}$. In this example $g(x,y)$ is convex in $y$. But the hyper-objective $\varphi(x)$  is discontinuous at $x=0$, because $\lim_{x \rightarrow 0^+} \varphi(x) = 0$, $\lim_{x \rightarrow 0^-} \varphi(x) = 1$.
\end{exmp}

\begin{remark}
In the above example, the lower-level problem is a constrained optimization in $y$. We can also give a similar counter-example for unconstrained problems by replacing $I_C(y)$ with a smoothed surrogate
$h(y) = (y - 1)\BI[y \ge 1] - y\BI[y \le 0]$ and then letting $g(x, y) = xy + h(y)$.
\end{remark}
In this example, the discontinuity of $\varphi(x)$ comes from the discontinuity of $Y^*(x)$. Below, we prove that this statement and its reverse generally holds.
As $Y^*(x)$ is a set-valued mapping, we introduce the Hausdorff distance and use it to define different types of continuity.

% Since continuity is necessary for differentiability, this example shows that when the lower-level function is merely convex, the hyper-gradient $\nabla \varphi(x)$ may not exist, and we can not define its stationary points. Next, we delve into the discontinuous issue and study the conditions that guarantee the continuity of $\varphi(x)$. One can observe that the discontinuity of $\varphi(x)$ in this example comes from that of $Y^*(x)$. Below, we show this is a general phenomenon: $\varphi(x)$ is continuous \textit{if and only if} $Y^*(x)$ is.
% As $Y^*(x)$ is a set-valued mapping, we introduce the Hausdorff distance and use it to define different types of continuity.

\begin{dfn} \label{def:hausdorff}
The Hausdorff distance between two sets $S_1,S_2\subseteq\mathbb{R}^d$ is defined as
\begin{align*}
{\rm dist}(S_1,S_2) =
\max\left\{\sup_{x_1\in S_1}  \inf_{x_2\in S_2} \Vert x_1 - x_2 \Vert ,\sup_{x_2\in S_2}\inf_{x_1\in S_1} \Vert x_1 - x_2 \Vert\right\}.
\end{align*}  
We also denote ${\rm dist}(v,S) = {\rm dist}(\{v\},S) $ for $v\in \mathbb{R}^d$, $S\subseteq\mathbb{R}^d$.
\end{dfn}

\begin{dfn}
We say a set-valued mapping $S(x): \BR^{n} \rightrightarrows \BR^{m}$ is (Pompeiu–Hausdorff) continuous if for any $x\in \BR^{n}$ and any  $\epsilon>0$, there exists $\delta>0$, such that for any $x^{\prime} \in \BR^{n}$ satisfying $\|x^{\prime}-x\|\le \delta$, we have ${\rm  dist }(S(x),S(x^\prime))\le \epsilon$.
\end{dfn}

\begin{dfn} 
We say a set-valued mapping $S(x): \BR^{n} \rightrightarrows \BR^{m}$ is (Pompeiu–Hausdorff) locally Lipschitz if for any $x \in \BR^{n}$, there exists $\delta>0$ and $L>0$ such that for any $x^{\prime} \in \sB_{\delta}(x)$ we have ${\rm dist }(S(x),S(x^\prime))\le L \Vert x -x' \Vert$. We say $S(x)$ is (globally) Lipschitz it holds for $\delta \rightarrow \infty$.
% We call a set-valued mapping $S(x): \BR^{d_1} \rightrightarrows \BR^{d_2}$ is $L$-Lipschitz if
% \begin{align*}
%     {\rm dist}(S(x_1),S(x_2))\le L\|x_1-x_2\|,\quad \forall x_1,x_2 \in \BR^{d_1}.
% \end{align*}
\end{dfn}
% \begin{dfn}
% We call a set-valued mapping $S(x): \BR^{n} \rightrightarrows \BR^{m}$ (Pompeiu–Hausdorff) Lipschitz if there exists some $L$ such that
% \begin{align*}
%     {\rm dist}(S(x),S(x') \le L \Vert x - x' \Vert, \quad \forall x,x' \in \BR^n.
% \end{align*}
% \end{dfn}
Based on the definitions, we show the following connections between $\varphi(x)$ and $Y^*(x)$. 
\begin{restatable}{thm}{thmcontcont} \label{thm:cont-cont}
Suppose that for any given $x \in \BR^{d_x}$ the set $Y^*(x)$ is non-empty and compact.
\begin{enumerate}[label=(\alph*)]
\item If $f(x,y)$ and $Y^*(x)$ are continuous , then $\varphi(x)$ is continuous. 
\item 
Conversely, if  $\varphi(x)$ is continuous for any continuous $f(x,y)$, then $Y^*(x)$ is continuous.
\item If $f(x,y)$ and $Y^*(x)$ are locally Lipschitz , then $\varphi(x)$ is locally Lipschitz. 
\item 
Conversely, if  $\varphi(x)$ is locally Lipschitz for any locally Lipschitz function $f(x,y)$, then $Y^*(x)$ is locally Lipschitz.
    \item If $f(x,y)$ is $C_f$-Lipschitz and $Y^*(x)$ is $\kappa$-Lipschitz, then we have that $\varphi(x)$ is $C_{\varphi}$-Lipschitz with $ C_{\varphi}=(\kappa+1)C_f$. 
\item 
Conversely, if $\varphi(x)$ is $C_{\varphi}$-Lipschitz for any $C_f$-Lipschitz $f(x,y)$, then we know that $Y^*(x)$ is $\kappa$-Lipschitz with~$\kappa = C_{\varphi}/C_f$.
    \end{enumerate}
\end{restatable}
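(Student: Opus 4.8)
The plan is to prove the six items by relating $\varphi(x)$, evaluated as the minimum of the continuous function $f(x,\cdot)$ over the compact set $Y^*(x)$, to the behavior of the set-valued map $Y^*$. The unifying tool is a basic ``change-of-set'' estimate: if $S_1, S_2$ are nonempty compact sets and $f$ is, say, $C_f$-Lipschitz jointly (or in $y$ locally), then
\begin{align*}
\Bigl| \min_{y \in S_1} f(x, y) - \min_{y \in S_2} f(x, y) \Bigr| \le C_f \,{\rm dist}(S_1, S_2).
\end{align*}
This follows by taking a minimizer $y_1 \in S_1$, picking $y_2 \in S_2$ with $\Vert y_1 - y_2 \Vert \le {\rm dist}(S_1, S_2)$ (possible because $S_2$ is compact and the Hausdorff distance controls the one-sided sup), so $\min_{S_2} f \le f(x,y_2) \le f(x,y_1) + C_f \Vert y_1 - y_2\Vert = \min_{S_1} f + C_f\,{\rm dist}(S_1,S_2)$, and symmetrically. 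For items (a) and (c) I would combine this with the triangle-inequality decomposition $|\varphi(x) - \varphi(x')| \le |\min_{Y^*(x)} f(x,\cdot) - \min_{Y^*(x')} f(x,\cdot)| + |\min_{Y^*(x')} f(x,\cdot) - \min_{Y^*(x')} f(x',\cdot)|$; the first term is bounded via the estimate above using continuity/local-Lipschitzness of $Y^*$, and the second term is bounded using continuity/local-Lipschitzness of $f$ in $x$ uniformly over the (locally bounded) family of sets $Y^*(x')$ for $x'$ near $x$. For item (e) this is quantitatively clean: one directly gets $|\varphi(x) - \varphi(x')| \le C_f\,{\rm dist}(Y^*(x), Y^*(x')) + C_f \Vert x - x' \Vert \le C_f(\kappa + 1)\Vert x - x'\Vert$.

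For the converse directions (b), (d), (f), the strategy is to choose a specific adversarial $f$ that reads off the geometry of $Y^*$. The natural choice is $f(x,y) = \Vert y - v \Vert$ for a fixed point $v$, or more precisely, to detect the one-sided Hausdorff gap, fix $\bar x$ and set $v$ to be a point of $Y^*(\bar x)$ (resp. $Y^*(x')$) achieving the supremum in the definition of ${\rm dist}$; then $\varphi(x') = \min_{y \in Y^*(x')} \Vert y - v \Vert = {\rm dist}(v, Y^*(x'))$, and comparing $\varphi(\bar x)$ with $\varphi(x')$ recovers (one side of) ${\rm dist}(Y^*(\bar x), Y^*(x'))$. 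The function $y \mapsto \Vert y - v\Vert$ is $1$-Lipschitz (hence continuous and locally Lipschitz), and is jointly continuous / locally Lipschitz in $(x,y)$ since it doesn't depend on $x$ — so it is an admissible test function in each of the three hypotheses. Scaling it by $C_f$ handles (f) with the stated constant $\kappa = C_\varphi / C_f$. One subtlety for (b), (d): to cover \emph{all} $x$ (not just a fixed $\bar x$) and to handle the fact that the adversarial $v$ depends on the pair $(x, x')$, I would note that the hypothesis quantifies over \emph{all} continuous (resp. locally Lipschitz) $f$, so for each $x, x'$ we may invoke it with the $f$ tailored to that pair; since $\varphi$-continuity at $x$ must hold for every such $f$ with the same $\delta$ only if we're careful — actually it's cleaner to fix $x$, let $x' \to x$, and for each such $x'$ select $v = v(x')$ realizing the sup, then use that continuity of the corresponding $\varphi$ at $x$ gives ${\rm dist}(v(x'), Y^*(x)) \to 0$ (since $v(x') \in Y^*(x')$ so $\varphi_{f}(x') = 0$ when $v(x') \in Y^*(x')$... ), deducing the one-sided bound; swapping roles of $x, x'$ gives the other side.

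The main obstacle I anticipate is the careful bookkeeping in the converse directions, specifically making sure the quantifier ``for any continuous $f$'' is used correctly: the adversarial test function depends on the perturbation $x'$ (through the point $v$ realizing the Hausdorff sup), so I cannot fix one $f$ and then let $x' \to x$; instead I must either (i) for each $x'$ separately invoke the hypothesis with its own $f$ and extract a uniform-in-$x'$ conclusion, or (ii) build a single test function, e.g. something like $f(x,y) = {\rm dist}(y, Y^*(x_0))$ for a cleverly chosen reference, that simultaneously witnesses the gap for all nearby $x'$. I expect option (i) to work: fixing $x$, for each $x'$ near $x$ choose $v(x') \in Y^*(x') \cup Y^*(x)$ achieving $\max$ in ${\rm dist}(Y^*(x), Y^*(x'))$, apply the hypothesis to $f_{v(x')}(y) = \Vert y - v(x')\Vert$; if $v(x') \in Y^*(x)$ then $\varphi_{f_{v(x')}}(x) = 0$ and continuity forces $\varphi_{f_{v(x')}}(x') = {\rm dist}(v(x'), Y^*(x')) \le \epsilon$, and if $v(x') \in Y^*(x')$ a symmetric argument applies. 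A second, minor obstacle is confirming that in (a), (c) the inner minima are attained and $\varphi$ is finite-valued — this is exactly what the standing hypothesis ``$Y^*(x)$ nonempty and compact'' together with continuity of $f(x,\cdot)$ guarantees via Weierstrass, and local boundedness of $\{Y^*(x') : \Vert x' - x\Vert \le \delta\}$ (needed to get the uniform modulus for the $f$-in-$x$ term) follows from continuity of $Y^*$ at $x$.
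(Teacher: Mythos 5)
Your forward directions (a), (c), (e) are essentially the paper's argument: pick a minimizer in one set, find a nearby point in the other via the Hausdorff bound, and split the increment into a ``change $y$'' term and a ``change $x$'' term. That part is fine.

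The converse directions contain a real gap, which you partially flag but do not close. Your option (i) is circular: you pick a test point $v(x')$ depending on the perturbation $x'$, set $f_{v(x')}(y)=\Vert y - v(x')\Vert$, and then want to ``apply continuity of $\varphi_{f_{v(x')}}$ at $x$'' to the specific increment $x'$. But the $\delta$ guaranteed by continuity of $\varphi_{f}$ at $x$ is allowed to depend on $f$, and $f$ depends on $x'$ — so nothing forces $\Vert x'-x\Vert<\delta(f_{v(x')})$. The quantifiers simply do not close. Your option (ii) also does not quite work as written, because $f(x,y)={\rm dist}(y,Y^*(x_0))$ (with a \emph{plus} sign) yields $\varphi(x)=\min_{y\in Y^*(x)}{\rm dist}(y,Y^*(x_0))$, which is an $\inf$, not the $\sup$ appearing in the Hausdorff distance.

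The paper resolves this with two distinct devices, one per direction of the Hausdorff distance. For $d_1 := \sup_{y_2\in Y^*(x_2)}{\rm dist}(y_2,Y^*(x_1))$ it uses the single test function $f(x,y)=-{\rm dist}(y,Y^*(x_1))$: this depends \emph{only} on the base point $x_1$, and crucially the minus sign converts the outer $\min$ over $Y^*(x_2)$ into a $\max$, so $\varphi(x_1)=0$ and $\varphi(x_2)=-d_1$. No circularity. For $d_2 := \sup_{y_1\in Y^*(x_1)}{\rm dist}(y_1,Y^*(x_2))$, no single test function depending only on $x_1$ can read it off (the worst $y_1$ depends on $x_2$), so the paper argues by contradiction: if $d_2(x_n)\ge\epsilon$ along $x_n\to x_1$, extract a convergent subsequence of maximizers $y_{n}\in Y^*(x_1)$ (compact and fixed!) with limit $y_1'\in Y^*(x_1)$, and \emph{only then} fix the test function $f(y)=\Vert y-y_1'\Vert$; because this $f$ is now fixed, continuity of the corresponding $\varphi$ at $x_1$ yields a contradiction. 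The subsequence-extraction step is exactly what breaks the dependence on $x'$ that plagues your option (i), and it is the idea missing from your proposal. (The same pattern is reused, more briefly, in (d) and (f).)
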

The theorem implies that continuity of the hyper-objective $\varphi(x)$ requires a strong assumption on the set of minima $Y^*(x)$ for $g$ and suggests that one would need local strong convexity of $g$ for the hyper-gradients to exist. However, we will see in the next subsection that even in such cases, finding a small hyper-gradient can be hard.

\subsection{Stationary Points May be Intractable to Find} \label{sec:NO}

% \begin{algorithm*}[t]  
% \caption{ An algorithm in the Hyper-Gradient Descent class $\fA_{\rm HGD}$} \label{alg:class}
% \begin{algorithmic}[1] 
% \STATE \textbf{inputs:} initialization $x_0,y_0$, number of outer loops $T$, number of inner loops $K$ \\[1mm]
% \STATE \textbf{for} $ t=0,\cdots, T-1$ \\[1mm]
% \STATE \quad Specify $y_{t}^0$ such that 
% $
% {\rm supp} (y_t^0) \subseteq  \bigcup_{ 0 \le s \le t,~0 \le k \le K} {\rm supp} \left(y_s^k\right)$. \\[1mm]
% \STATE \quad  \textbf{for} $k = 0,\cdots,K-1$ \\[1mm]
% \STATE \quad  \quad Generate $y_t^{k+1}$ such that $ {\rm supp}(y_t^{k+1}) \subseteq \bigcup_{0 \le i \le k, ~h \in \{f,g \} } {\rm supp} \left( \nabla_y h\left(x_t, y_t^i\right) \right)$. \\[1mm]
%  \STATE  \quad \textbf{end for} \\[1mm]
% \STATE   \quad Uses an algorithm in $\fA_{\rm IGE}$ to construct $\hat G\left(x_t, \{ y_t^k\}_{k=0}^K\right)$. \\[1mm]
% \STATE  \quad Let $\hat \nabla \varphi(x_t) = \nabla_x f(x_t,y_t^{K}) + \hat G\left(x_t, \{ y_t^k\}_{k=0}^K\right) $. \\[1mm]
% \STATE  \quad Generate $ x_{t+1}$ such that $ {\rm supp}(x_{t+1}) \subseteq \bigcup_{0 \le s \le t} {\rm supp} \left(\hat \nabla \varphi(x_s) \right) $. \\[1mm]
% \STATE \textbf{end for}
% \end{algorithmic}
% \end{algorithm*}

\begin{algorithm*}[t]  
\caption{Zero-respecting algorithms for Problem (\ref{prob:sim-C})} \label{alg:class}
\begin{algorithmic}[1] 
\STATE \textbf{inputs:} initialization $x_0,y_0$, number of outer loops $T$, number of inner loops $K$ \\[1mm]
\STATE \textbf{for} $ t=0,\cdots, T-1$ \\[1mm]
\STATE \quad Generate $y_{t}^0$ such that 
$
{\rm supp} (y_t^0) \subseteq  \bigcup_{ 0 \le s < t,~0 \le k \le K} {\rm supp} \left(y_s^k\right)$. \\[1mm]
\STATE \quad  \textbf{for} $k = 0,\cdots,K-1$ \\[1mm]
\STATE \quad  \quad Generate $y_t^{k+1}$ such that $ {\rm supp}(y_t^{k+1}) \subseteq \bigcup_{0 \le i \le k, ~h \in \{f,g \} } {\rm supp} \left( \nabla_y h\left(x_t, y_t^i\right) \right)$. \\[1mm]
 \STATE  \quad \textbf{end for} \\[1mm]
% \STATE   \quad Uses an algorithm in $\fA_{\rm IGE}$ to construct $\hat G\left(x_t, \{ y_t^k\}_{k=0}^K\right)$. \\[1mm]
\STATE  \quad Generate $ x_{t+1}$ such that $ {\rm supp}(x_{t+1}) \subseteq \bigcup_{0 \le s \le t} {\rm supp} \left( \nabla_x f(x_s,y_s^K) \right) $. \\[1mm]
\STATE \textbf{end for}
\end{algorithmic}
\end{algorithm*}

This subsection shows that even for nonconvex-strictly-convex bilevel problems where $\nabla \varphi(x)$ is guaranteed to exist, finding a point with a small hyper-gradient can still be intractable. We prove the negative result on the following simplified case of Problem (\ref{prob:main}) when the lower-level function does not depend on $x$:
\begin{align} \label{prob:sim-C}
    \min_{x \in \BR^{d_x}, y \in Y^*} f(x,y), \quad {\rm s.t.} \quad Y^* = \arg \min_{y \in \BR^{d_y}} g(y).
\end{align}
This problem is much simpler because the implicit gradient in \Eqref{grad-sc} disappears as $Y^*$ is a fixed set. However, we can show that this problem is hard enough for the following algorithm class.

\paragraph{Zero-respecting algorithms for bilevel problems.} 
We define an algorithm class that covers a wide range of existing algorithms designed for bilevel optimization.
This definition is inspired by the classical definitions in~\citet{nesterov2018lectures} but has an additional structure for bilevel problems (\ref{prob:sim-C}). We first recall the definition of ``zero-respecting'' algorithms.
\begin{dfn}[Definition 1 \citet{carmon2021lower}]
For a vector $v \in \BR^d$, we use 
${\rm supp}(v) = \{ j \in [d] : v_{[j]} \ne 0\}$ to denote its support.
We say an algorithm $\fA$ is zero-respecting to oracle $\mathbb{O}$ $:\mathbb{R}^d\to \mathbb{R}^d$ if the sequence $\{z_t \}$ generated by algorithm $\fA$ only explore coordinates in the support of the previous oracles, \textit{i.e.} the sequence satisfies ${\rm supp}(z_{t+1}) \subseteq  \bigcup_{0 \le s \le t} {\rm supp} \left( \mathbb{O} (z_s) \right)$.
\end{dfn}
% For first-order methods, oracle \texttt{O} returns the gradient of a given testing point. 
% For deriving lower bounds for minimization problems with gradient-based methods, the oracle \texttt{O} is the gradient oracle. 
We then define the zero-respecting algorithms for Problem (\ref{prob:sim-C}).
\begin{dfn}
    We say a (first-order) algorithm for Problem (\ref{prob:sim-C})  is zero-respecting if it has the procedure as Algorithm \ref{alg:class}. Such an algorithm consists of two loops: In the inner loop, it generates a sequence $\{ y_t^k\}_{k=1}^K$ that is zero-respecting to $\nabla_y f(x_t,y)$ and $ \nabla g(y)$ for a fixed $x_t$; In the outer loop, it generates a sequence $\{x_t\}_{t=1}^T$ that is zero-respecting to $\nabla_x f(x, y^K)$.
\end{dfn}

The above zero-respecting algorithm class for bilevel problems subsumes many known algorithms  when applied to Problem (\ref{prob:sim-C}), including: AID~\citep{ghadimi2018approximation}, ITD~\citep{ji2021bilevel}, GALET~\citep{xiao2023generalized}, (Prox)-F${}^2$BA~\citep{kwon2023penalty,kwon2023fully,chen2023near}, FdeHBO~\citep{yang2024achieving}, 
    BGS-Opt~\citep{arbel2022non}, BDA~\citep{liu2020generic}, BVFIM~\citep{liu2021value}, PDBO~\citep{sow2022primal}, 
    SLM~\citep{lu2023slm},
    LV-HBA~\citep{yao2024constrained}.

Below, we give a hard instance such that all these algorithms cannot find small hypergradients as they get stuck at the initialization $x_0$.

\begin{restatable}{thm}{ThmNO} \label{thm:NO}
Without loss of generality, suppose $x_0= y_0 = 0$ (otherwise we can translate the functions and the result still holds).
Fix $T$ and $K$. Let $d_x = 1$, $d_y = q = 2TK$, and
\begin{align*}
    f(x,y) = 2 (x +1)^2 \sum_{j=q/2}^{q} \psi(y_{[j]}), \quad g(y) = \frac{1}{8} (y_{[1]} -1/\sqrt{q} )^2 +  \frac{1}{8} \sum_{j=1}^{q-1} \left(y_{[j+1]} - y_{[j]} \right)^2, 
\end{align*}
where $\psi(\,\cdot\,): \BR \rightarrow \BR$ (defined in \Eqref{eq:hermitte-phi}) is a function with $\psi'(0) = \psi(0) = 0$. Define the sublevel set $ \mathcal{X} := \{ x: \varphi(x) \le \varphi(0)\}$. There exists numerical constants $c_1,c_2>0$ such that
\begin{enumerate}
\item $f(x,y)$ is $c_1$-Lipschitz in $y$ on $\fX \times \BR^{d_y}$;
    \item $f(x,y)$ has $c_2$-Lipschitz gradients on $\fX \times \BR^{d_y}$;
    % {\color{red} \item $f(x,y)$ has $c_3$-Lipschitz Hessian on $\fX \times \BR^{d_y}$};
    \item $g(y)$ is a strictly convex quadratic and has $1$-Lipschitz gradients;
    \item The resulting hyper-objective is $\varphi(x) = (x+1)^2 /2$.
\end{enumerate}
For this problem,
any algorithm with a procedure as Algorithm \ref{alg:class}  stays at $x_t = 0$ for any iteration number $t \le T$.
\end{restatable}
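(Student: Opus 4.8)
The plan is to analyze the support structure of the iterates produced by any algorithm following the procedure in Algorithm~\ref{alg:class}, and to show that the lower-level gradient dynamics can never ``activate'' any coordinate $y_{[j]}$ with $j \ge q/2$, which are precisely the coordinates on which $f$ depends through $x$. First I would record the key structural facts about the hard instance: since $g(y)$ is a chain-structured quadratic with $g(y) = \frac{1}{8}(y_{[1]} - 1/\sqrt q)^2 + \frac18 \sum_{j=1}^{q-1}(y_{[j+1]}-y_{[j]})^2$, the partial derivative $\partial g / \partial y_{[j]}$ couples only the coordinates $y_{[j-1]}, y_{[j]}, y_{[j+1]}$ (with the affine constant $1/\sqrt q$ appearing only in $\partial g/\partial y_{[1]}$). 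Crucially, at $y=0$ only $\partial g/\partial y_{[1]} = -\frac14 \cdot 1/\sqrt q$ is nonzero; every other coordinate of $\nabla g(0)$ vanishes. Similarly, since $\psi'(0) = \psi(0) = 0$, we have $\nabla_y f(x, 0) = 0$ for every $x$, so $f$ contributes nothing to support growth when $y$ is at the origin, and more generally $\partial f/\partial y_{[j]}$ is nonzero only if $y_{[j]} \ne 0$ and $j \ge q/2$.

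Next I would prove by induction on the outer index $t$ the invariant that $x_t = 0$ and that, at the start of outer iteration $t$, every inner iterate satisfies ${\rm supp}(y_t^k) \subseteq \{1, 2, \dots, t + k\}$ (or some similar linear-in-$(t,k)$ bound; the exact constant is a bookkeeping detail). The inductive step has two parts. For the inner loop: starting from $y_t^0$ supported in $\{1,\dots, t\}$, one step of any zero-respecting update can add at most the coordinates appearing in $\bigcup_i {\rm supp}(\nabla_y f(x_t, y_t^i)) \cup {\rm supp}(\nabla g(y_t^i))$; since $\nabla g$ applied to a vector supported on $\{1,\dots,m\}$ has support contained in $\{1,\dots,m+1\}$ (the chain structure only spreads support by one coordinate at a time), and since $x_t = 0$ combined with $\psi'(0)=0$ forces $\nabla_y f$ to never introduce a coordinate past the current $y$-support, each inner step grows the support by at most one coordinate. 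Hence after $K$ inner steps ${\rm supp}(y_t^K) \subseteq \{1, \dots, t + K\}$. For the outer step: $x_{t+1}$ is zero-respecting to $\nabla_x f(x_s, y_s^K)$; but $\nabla_x f(x,y) = 4(x+1)\sum_{j \ge q/2}\psi(y_{[j]})$, which is a scalar (since $d_x = 1$), and this scalar is zero whenever $y_s^K$ has no support in $\{q/2, \dots, q\}$. Because $q = 2TK$ and $t + K \le TK + K < q/2$ for the relevant range of $t$ (here is where the choice $q = 2TK$ is used — after at most $T$ outer iterations and $K$ inner iterations each, the $y$-support has never reached coordinate $q/2$), we get $\nabla_x f(x_s, y_s^K) = 0$ for all $s \le t$, so ${\rm supp}(x_{t+1}) \subseteq \emptyset$, i.e. $x_{t+1} = 0$. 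This closes the induction.

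Finally I would verify the four listed smoothness/structure properties, which are largely routine: property (3) is immediate from the explicit quadratic form of $g$ (its Hessian is a fixed tridiagonal matrix whose operator norm one checks is at most $1$, and it is strictly positive definite since the quadratic form vanishes only at a unique point); property (4) follows because $Y^*$ is the singleton $\{y^*\}$ with $y^*_{[j]} = 1/\sqrt q$ for all $j$ (solving the chain recursion), whence $\varphi(x) = f(x, y^*) = 2(x+1)^2 \sum_{j \ge q/2}\psi(1/\sqrt q)$, and one designs $\psi$ (via the explicit formula referenced as \Eqref{eq:hermitte-phi}) so that $\sum_{j=q/2}^q \psi(1/\sqrt q) = (q/2 + 1)\psi(1/\sqrt q) = 1/4$, giving $\varphi(x) = (x+1)^2/2$; properties (1) and (2) follow from boundedness of $\psi, \psi', \psi''$ together with the fact that on the sublevel set $\fX = \{x : \varphi(x) \le \varphi(0)\} = \{x : (x+1)^2 \le 1\} = [-2, 0]$ the prefactor $(x+1)^2$ and its derivative are bounded by absolute constants. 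The main obstacle is getting the support-propagation bookkeeping in the inner loop exactly right — in particular making sure that the factor $2$ in $q = 2TK$ genuinely suffices to keep the support away from coordinate $q/2$ through all $T$ outer rounds, and confirming that no path through the oracle calls (including the carryover from $y_s^k$ across outer iterations via line 3 of Algorithm~\ref{alg:class}) can leak support faster than ``one new coordinate per gradient evaluation of $g$.'' The construction of $\psi$ with the prescribed vanishing at $0$, bounded derivatives, and the normalization $\sum \psi(1/\sqrt q) = 1/4$ is the only other place requiring care, but it is a one-dimensional interpolation problem (a smoothed/rescaled bump) that the reference to \Eqref{eq:hermitte-phi} presumably handles.
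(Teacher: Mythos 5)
Your proof takes essentially the same approach as the paper: an induction on the support of the $y$-iterates, using (i) the zero-chain property of $g$ to show that $\nabla g$ spreads support by at most one new coordinate per evaluation, (ii) the vanishing $\psi'(0)=0$ to ensure $\nabla_y f$ never introduces a coordinate past the current support, and (iii) the consequence that all coordinates with index above $q/2$ remain zero, hence $\nabla_x f(x_t,y_t^K)=0$ and $x_t$ never moves.

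However, the bookkeeping in your inductive invariant is off, and it matters for checking the dimension budget. You claim ${\rm supp}(y_t^k)\subseteq\{1,\ldots,t+k\}$, but this cannot hold: by line~3 of Algorithm~\ref{alg:class}, $y_t^0$ inherits the union of the supports of all inner iterates from the previous rounds $s<t$, so after round $t-1$ (which itself runs $K$ inner steps) the support may already be $\{1,\ldots,tK\}$, and after $k$ inner steps of round $t$ the correct bound is ${\rm supp}(y_t^k)\subseteq\{1,\ldots,tK+k\}$, exactly as in the paper. Your downstream inequality ``$t+K\le TK+K<q/2$'' is then false as written, since $q=2TK$ gives $TK+K>q/2$. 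The correct accounting is $tK+k\le TK=q/2$ for $t\le T-1$, $k\le K$ (an equality at the very last inner step), so coordinates $q/2+1,\ldots,q$ stay zero throughout. For the conclusion $\nabla_x f(x_t,y_t^K)=0$ to follow, one must use the proof-level indexing $r(y)=\sum_{j=q/2+1}^{q}\psi(y_{[j]})$ (so $r$ depends only on coordinates strictly above $q/2$); the theorem display's $\sum_{j=q/2}^{q}$ is an off-by-one that you yourself implicitly flagged in verifying property~(4), where it produces $(q/2+1)\psi(\beta)=1/4+1/(2q)$ rather than the claimed $1/4$. With these indexing fixes your argument is correct and matches the paper's.
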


In our construction, we let $\psi(t)$ be a function which is $ \frac{1}{2} t^2$ near zero, while remaining
bounded when $\vert t \vert$ is large. 
We assign different dimensions in $y$ to functions $\psi$ and $g$ separately.
Note that $g$ is designed such that any
zero-respecting algorithm can only make progress at most one dimension per oracle calls. As the progress in $g$ is slow, it will not affect the dimensions that $\psi$ depends on.

Below we discuss the insights brought by the hard instances that we have constructed.

\begin{rmk}
    Our results in these two subsections, from two complementary perspectives, motivate us to focus on more well-behaved lower-level functions that: (1) can confer the continuity of  $Y^*(x)$, to avoid the case as Example \ref{exmp:disconti}. (2) an algorithm can converge rapidly to a neighborhood of $Y^*(x)$, to avoid the case as Theorem \ref{thm:NO}. In the next section, we will show that the PL condition simultaneously satisfies both of these requirements.
\end{rmk}

\section{Positive Results for Lower-Level Functions Satisfying PL Conditions} \label{sec:YES}

As we have shown finding small hyper-gradients of nonconvex-convex bilevel problems is intractable for nonconvex-convex bilevel problems, we turn our attention to the tractable cases. 
Motivated by recent works~\citep{shen2023penalty,kwon2023penalty,arbel2022non}, we study the case when the lower-level problem satisfies the PL condition.

\subsection{ The Assumptions for Nonconvex-PL Bilevel Problems } \label{sec:intro-F2BA}

Without the typical lower-level strong convexity assumption, it is difficult to directly analyze the implicit gradient as \Eqref{grad-sc} since we can not directly use the implicit function theorem. Recently, \citet{kwon2023penalty} proposed a novel way to study the differentiability of $\varphi(x)$ for nonconvex-PL bilevel problems. Instead of directly studying the original hyper-objective $\varphi(x)$, they studied  the 
following value-function penalized hyper-objective {as a bridge}:
\begin{align} \label{eq:varphi-sigma}
   \varphi_{\sigma}(x):= \min_{y \in \fY} \left\{ f(x,y) + 
   \frac{g(x,y) - g^*(x)}{\sigma}
   \right\},
\end{align}
where  $g^*(x) = \min_{y \in \fY } g(x,y)$ is the lower-level value-function and $\fY \subseteq \BR^{d_y}$. They first studied the differentiability of $\varphi_{\sigma}(x)$ and then showed the limit of $\sigma \rightarrow 0^+$ exists.

However, the PL condition on $g$ is not sufficient to guarantee the differentiability of $\varphi_{\sigma}(x)$. Below, we give a concrete example to illustrate this.

\begin{exmp}
Consider Problem (\ref{prob:main}) with $d_x = 1$, $d_y = 2$. Let the upper-level function $f(x,y) =xy_{[1]} $, and the lower-level function $g(x,y) = \frac{1}{2} y_{[2]}^2$. The lower-level function is $1$-PL in $y$. Consider the penalized hyper-objective $\varphi_{\sigma}(x)$ in \Eqref{eq:varphi-sigma}. For any $\sigma \ge 0$,
if taking minimum in $y$ over all the domain $y \in \BR^{2}$, $\varphi_{\sigma}(x)$ is not well-defined since $\varphi_{\sigma}(x) = -\infty$ for any $x \ne 0$; if taking minimum in $y$ over a compact domain such as $\fY = [0,1] \times [0,1] $, $\varphi_{\sigma}(x) = \min \{ x,0\}$ is not differentiable at the point $x=0$.
\end{exmp}

To guarantee the differentiability of $\varphi_{\sigma}(x)$, \citet{kwon2023penalty} assumed the PL condition not only holds for $g$, but also holds for the penalty function $h_{\sigma} = \sigma f+ g$ uniformly for all $\sigma$ near zero. In addition to this assumption, the authors also imposed other standard smoothness assumptions which are typically required in previous works. These assumptions, as stated in \citet{kwon2023penalty}, are formally presented below.

% {\color{red} By imposing such a condition as well as other standard smoothness assumptions that are also required by nonconvex-strongly-convex bilevel problems, we formally state the assumptions in \citet{kwon2023penalty} below.}

\begin{asm}
\label{asm:PL} Recall the bilevel problem defined in~\Eqref{prob:main}, where $f$ is the upper-level problem, $g$ is the lower-level problem. Let $h_{\sigma}= \sigma f + g$ be the penalty function. Suppose that 
\begin{enumerate}[label=(\alph*)]
\item The penalty function $h_{\sigma}(x,y)$ is $\mu$-PL in $y$ for any $0 \le \sigma \le \overline{\sigma}$;
    \item The upper-level function $f(x,y)$ is $C_f$-Lipschitz in $y$ and has $L_f$-Lipschitz gradients;
    \item The lower-level function $g(x,y)$ has $L_g$-Lipschitz gradients;
    \item The upper-level function $f(x,y)$ has $\rho_f$-Lipschitz Hessians in $y$, \textit{i.e.} $\nabla_{xy}^2 f$ and $\nabla_{yy}^2 f$ are $\rho_f$-Lipschitz continuous;
\item The lower-level function $g(x,y)$ has $\rho_g$-Lipschitz Hessians.
\end{enumerate}
Under this assumption, we define the largest smoothness constant $\ell = \max\{C_f,L_f, L_g, \rho_g \}$ and the condition number $\kappa: = \ell/ \mu$.
\end{asm}

\begin{rmk} \label{rmk:asm}
Because we focus on the behavior when $\sigma$ is close to zero, whenever we mention $\sigma$ in the context, we always assume that $\sigma \in [0,\overline{\sigma}] $ even when this condition is not explicitly stated.
Note that all the assumptions are the same as \citet{kwon2023penalty}, except Assumption \ref{asm:PL}(a) may seem different from the Prox-EB assumption (Assumption 1 in \citet{kwon2023penalty}). In Appendix \ref{apx:Prox-F2BA} we show these two assumptions can imply each other in the unconstrained case. Our narrative uses the PL condition and is therefore more convenient. As a by-product, we show the proximal operator in \citet{kwon2023penalty} is unnecessary, and the original F${}^2$BA~\citep{chen2023near} can also converge under PL conditions.
In Assumption \ref{asm:PL}(d), \citet{kwon2023penalty} simplily assumes $f(x,y)$ has Lipschitz Hessians, but we note that their analysis only requires $\nabla_{xy}^2 f$ and $\nabla_{yy}^2 f$ are Lipschitz continuous. We use this refined assumption because the $\Omega(\epsilon^{-2})$ lower bound function for finding an $\epsilon$-stationary point of $f(x)$ does not have Lipschitz continuous Hessians in $x$, so we also do not assume $\nabla_{xx}^2 f$ is Lipschitz continuous in our upper bounds.
\end{rmk}

\begin{rmk} \label{rmk:huang}
Assumption \ref{asm:PL}(a) can be also replaced by $g(x,y)$ satisfies $\mu$-PL condition with a unique minimizer $y^*(x)$ and $\nabla_{yy}^2 g(x,y^*(x))$ is non-singular as \citet{huang2023momentum,huang2024optimal}. The analysis would be almost the same, but it is an easier case as discussed in Section 3.2.1 \citep{kwon2023penalty}. In this paper, we focus on the more challenging case as in \citep{kwon2023penalty}.
\end{rmk}
Assumption~\ref{asm:PL} ensures that the solution set $Y^*$ is stable under perturbations of $\sigma$ and $x$. 

\begin{restatable}{lem}{lemsetLip} \label{lem:set-Lip}
Let $Y^*_\sigma:= \arg \min_{y \in \BR^{d_y}} h_{\sigma}(x,y)$ denote the set of minima for the penalty function $h_\sigma(x, y) = \sigma f(x,y) + g(x,y)$. Under Assumption \ref{asm:PL}, we have that
    \begin{align*}
        {\rm dist}( Y_{\sigma_1}^*(x_1), Y_{\sigma_2}^*(x_2)) \le \frac{C_f}{\mu} \vert \sigma_1 - \sigma_2 \vert + \frac{\sigma L_f + L_g}{\mu} \Vert x_1 -x_2 \Vert.
    \end{align*}
\end{restatable}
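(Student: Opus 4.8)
The plan is to bound the Hausdorff distance between the two solution sets by chaining a perturbation-in-$\sigma$ estimate with a perturbation-in-$x$ estimate, using in each case the standard quadratic-growth consequence of the PL condition. Concretely, let $y_1 \in Y^*_{\sigma_1}(x_1)$ be arbitrary; I want to produce a point $y_2 \in Y^*_{\sigma_2}(x_2)$ close to it (and symmetrically). The key tool is that a $\mu$-PL function $h$ satisfies the quadratic growth bound ${\rm dist}(y, \arg\min h)^2 \le \tfrac{2}{\mu}\bigl(h(y) - \min h\bigr)$; this is a classical implication of PL that I would either cite or prove in one line. So to control ${\rm dist}(y_1, Y^*_{\sigma_2}(x_2))$ it suffices to control $h_{\sigma_2}(x_2, y_1) - h^*_{\sigma_2}(x_2)$.

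The first step is the pure $\sigma$-perturbation at fixed $x$. Fix $x$ and compare $h_{\sigma_1}(x,\cdot)$ and $h_{\sigma_2}(x,\cdot)$: since $h_{\sigma_1} - h_{\sigma_2} = (\sigma_1-\sigma_2) f$ and $f(x,\cdot)$ is $C_f$-Lipschitz, the two functions differ by at most $C_f|\sigma_1-\sigma_2|$ times the distance travelled in $y$, but more usefully their \emph{minimum values} differ by at most $C_f |\sigma_1 - \sigma_2| \cdot {\rm diam}$ on the relevant region — I would instead argue directly: for $y_1 \in Y^*_{\sigma_1}(x)$, writing $\Delta = |\sigma_1 - \sigma_2|$, one has $h_{\sigma_2}(x,y_1) - h^*_{\sigma_2}(x) \le h_{\sigma_1}(x,y_1) + \Delta C_f \|y_1 - y_2^\star\| - h_{\sigma_1}(x, y_2^\star) + \Delta C_f\|y_1-y_2^\star\|$ where $y_2^\star$ is the projection of $y_1$ onto $Y^*_{\sigma_2}(x)$; bounding $h_{\sigma_1}(x,y_1) \le h_{\sigma_1}(x,y_2^\star)$ and combining with quadratic growth $\|y_1 - y_2^\star\|^2 \le \tfrac2\mu (h_{\sigma_2}(x,y_1)-h^*_{\sigma_2}(x))$ gives a self-bounding inequality that, after solving the resulting quadratic in $\|y_1-y_2^\star\|$, yields ${\rm dist}(y_1, Y^*_{\sigma_2}(x)) \le \tfrac{2 C_f}{\mu}\Delta$. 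The cleaner route I would actually pursue: show $\nabla_y h_{\sigma_2}(x,y_1) = \nabla_y h_{\sigma_1}(x,y_1) + (\sigma_2-\sigma_1)\nabla_y f(x,y_1) = (\sigma_2-\sigma_1)\nabla_y f$, so $\|\nabla_y h_{\sigma_2}(x,y_1)\| \le C_f \Delta$, then PL gives $h_{\sigma_2}(x,y_1) - h^*_{\sigma_2} \le \tfrac{1}{2\mu}\|\nabla_y h_{\sigma_2}(x,y_1)\|^2 \le \tfrac{C_f^2\Delta^2}{2\mu}$, and quadratic growth gives ${\rm dist} \le \tfrac{C_f \Delta}{\mu}$. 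That is tighter and matches the claimed $\tfrac{C_f}{\mu}|\sigma_1-\sigma_2|$ coefficient exactly.

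The second step is the $x$-perturbation at fixed $\sigma$, done identically: for $y \in Y^*_{\sigma}(x_1)$, $\nabla_y h_\sigma(x_2, y) = \nabla_y h_\sigma(x_2,y) - \nabla_y h_\sigma(x_1, y)$, and since $\nabla_y h_\sigma = \sigma \nabla_y f + \nabla_y g$ with $\nabla_y f$ being $L_f$-Lipschitz in $x$ and $\nabla_y g$ being $L_g$-Lipschitz in $x$ (both follow from the gradient-Lipschitz assumptions in Assumption~\ref{asm:PL}), we get $\|\nabla_y h_\sigma(x_2,y)\| \le (\sigma L_f + L_g)\|x_1 - x_2\|$; then PL plus quadratic growth yields ${\rm dist}(y, Y^*_\sigma(x_2)) \le \tfrac{\sigma L_f + L_g}{\mu}\|x_1 - x_2\|$. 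Finally I combine the two steps with the triangle inequality for the Hausdorff distance: ${\rm dist}(Y^*_{\sigma_1}(x_1), Y^*_{\sigma_2}(x_2)) \le {\rm dist}(Y^*_{\sigma_1}(x_1), Y^*_{\sigma_2}(x_1)) + {\rm dist}(Y^*_{\sigma_2}(x_1), Y^*_{\sigma_2}(x_2))$, and each bound above is in fact one-sided-symmetric (the argument with the roles of the two sets swapped is identical because PL holds for both $h_{\sigma_1}$ and $h_{\sigma_2}$), so it upgrades to a genuine Hausdorff bound.

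The main obstacle is being careful about which PL inequality applies to which function: the quadratic-growth estimate I need for the target set must use the PL constant of $h_{\sigma_2}$ (resp. $h_\sigma(x_2,\cdot)$), which is exactly why Assumption~\ref{asm:PL}(a) is stated \emph{uniformly} over $\sigma \in [0,\overline\sigma]$ — without uniformity the chaining breaks. A secondary subtlety is verifying that $\nabla_y f$ and $\nabla_y g$ are Lipschitz in the $x$-argument with the stated constants; this is immediate from the joint gradient-Lipschitz hypotheses but should be stated explicitly. Everything else is a short, routine computation.
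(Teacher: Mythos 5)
Your proof is correct and uses the same core mechanism as the paper's: bound $\|\nabla_y h_{\sigma_2}(x_2, y_1)\|$ for a point $y_1$ with $\nabla_y h_{\sigma_1}(x_1,y_1)=0$ via the gradient-Lipschitz hypotheses, then convert this gradient bound into a distance bound via the PL error bound (your composition of PL with quadratic growth is exactly Lemma~\ref{lem:PL-EB}). The paper does this in a single one-shot estimate rather than chaining through the intermediate set $Y^*_{\sigma_2}(x_1)$ with the Hausdorff triangle inequality, but that is only a cosmetic reorganization and produces the same coefficients.
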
 

Then we can use Theorem \ref{thm:cont-cont} to get the continuity of hyper-objective $\varphi(x)$.

%{\color{red} say about Hausdoff and recall the previous discussion?}

\subsection{F${}^2$BA Can also be Applied to Nonconvex-PL Bilevel Problems} \label{sec:alg-epsilon3}

Although F${}^2$BA is originally proposed for nonconvex-strongly-convex bilevel problems,
 we show that F${}^2$BA can also be applied to nonconvex-PL bilevel problems in this section. 
 
 Our starting points are the following lemmas that hold once we have Lipschitz continuity of solution set from Lemma \ref{lem:set-Lip}, which unnecessarily requires strong convexity. Firstly, we can obtain the following result by using the generalized Danskin's theorem  \citep{shen2023penalty} twice, specifically, in both $Y^*(x)$ and $Y_{\sigma}^*(x)$.

\begin{restatable}[\citet{shen2023penalty}]{lem}{lemDanskin} \label{lem:Danskin} 
Recall that $\varphi_{\sigma}(x)$ is the penalized hyper-objective defined in \Eqref{eq:varphi-sigma}.
Under Assumption \ref{asm:PL}, $\nabla \varphi_{\sigma}(x)$ exists and takes the form of 
    \begin{align} \label{eq:nabla-sigma}
    \nabla \varphi_{\sigma}(x) =  \nabla_x f(x,y_{\sigma}^*(x)) + \frac{\nabla_x g(x,y_{\sigma}^*(x)) - \nabla_x g(x,y^*(x))}{\sigma},
\end{align}
where $y^*(x)$, $y_{\sigma}^*(x)$ can be arbitrary elements in $Y^*(x)$ and $Y_{\sigma}^*(x)$, respectively.
\end{restatable}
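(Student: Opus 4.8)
The plan is to express $\varphi_\sigma$ as a difference of two lower-level value functions and then differentiate each of them via the generalized Danskin's theorem of \citet{shen2023penalty}. First, since $g^*(x)$ and the factor $1/\sigma$ do not depend on $y$, the minimization in \Eqref{eq:varphi-sigma} has the same set of minimizers as $\min_y h_\sigma(x,y)$, namely $Y^*_\sigma(x)$, and
\begin{align*}
\varphi_\sigma(x) = \frac{1}{\sigma}\Big( h^*_\sigma(x) - g^*(x) \Big), \qquad h^*_\sigma(x) := \min_{y \in \BR^{d_y}} h_\sigma(x,y), \qquad g^*(x) := \min_{y \in \BR^{d_y}} g(x,y).
\end{align*}
Assumption \ref{asm:PL}(a), applied both at a general $\sigma \in [0,\overline\sigma]$ and at $\sigma = 0$ (so that $h_\sigma$ and $g = h_0$ are PL), guarantees via Definition \ref{dfn:PL} that these minima are attained and that $Y^*_\sigma(x)$ and $Y^*(x) = Y^*_0(x)$ are nonempty; in particular the restriction to a compact $\fY$ in \Eqref{eq:varphi-sigma} is immaterial under Assumption \ref{asm:PL}.

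Next I would verify the hypotheses of the generalized Danskin's theorem for $\phi = h_\sigma$, and then separately for $\phi = g = h_0$. The function $\phi$ has a Lipschitz gradient jointly in $(x,y)$: $g$ has $L_g$-Lipschitz gradients and $f$ has $L_f$-Lipschitz gradients by Assumption \ref{asm:PL}(b,c), hence $h_\sigma = \sigma f + g$ has $(\sigma L_f + L_g)$-Lipschitz gradients. The argmin map is nonempty-valued by the PL condition as above. Crucially, the argmin map $x \mapsto Y^*_\sigma(x)$ is globally Lipschitz in the Hausdorff distance — this is exactly Lemma \ref{lem:set-Lip} taken with $\sigma_1 = \sigma_2 = \sigma$ (and with $\sigma_1 = \sigma_2 = 0$ for $g$). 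The generalized Danskin's theorem then yields that $h^*_\sigma$ and $g^*$ are differentiable with
\begin{align*}
\nabla h^*_\sigma(x) = \sigma \nabla_x f(x, y^*_\sigma(x)) + \nabla_x g(x, y^*_\sigma(x)), \qquad \nabla g^*(x) = \nabla_x g(x, y^*(x)),
\end{align*}
for \emph{any} selections $y^*_\sigma(x) \in Y^*_\sigma(x)$ and $y^*(x) \in Y^*(x)$. Substituting into $\nabla \varphi_\sigma(x) = \tfrac{1}{\sigma}\big(\nabla h^*_\sigma(x) - \nabla g^*(x)\big)$ and simplifying produces \Eqref{eq:nabla-sigma}, with the $y$ appearing in $\nabla_x f(x,y)$ being $y^*_\sigma(x)$.

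The main obstacle is invoking the generalized Danskin's theorem in the strong form that simultaneously (i) certifies differentiability of the value function and (ii) shows the gradient formula holds for an \emph{arbitrary} minimizer rather than a distinguished one; both conclusions rest on the Lipschitz stability of the argmin set supplied by Lemma \ref{lem:set-Lip}, without which $\nabla_x \phi(x,\cdot)$ need not be constant across the solution set and the value function need not even be differentiable. A secondary point to confirm is that $\sigma = 0$ is an admissible choice in Assumption \ref{asm:PL}(a) and in Lemma \ref{lem:set-Lip} — it is — so that the identical argument differentiates the lower-level value function $g^*$.
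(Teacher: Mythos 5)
Your proposal is correct and matches the paper's intended argument: the paper's proof is a one-line citation to the generalized Danskin's theorem of \citet{shen2023penalty}, and the surrounding narrative explicitly says it is applied ``twice, specifically, in both $Y^*(x)$ and $Y_{\sigma}^*(x)$,'' which is exactly your decomposition $\varphi_\sigma = \sigma^{-1}(h^*_\sigma - g^*)$ with the Lipschitz stability from Lemma~\ref{lem:set-Lip} supplying the needed hypothesis. You have simply filled in the details the paper delegated to the cited reference, including the correct reading that the $y$ in $\nabla_x f(x,y)$ of \Eqref{eq:nabla-sigma} should be $y^*_\sigma(x)$.
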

Secondly, the stability of $Y_{\sigma}^*$ under perturbations of $\sigma$ by Lemma \ref{lem:set-Lip} implies the stability of $\varphi_{\sigma}$ and $\nabla \varphi_{\sigma}$ by invoking the result by
\citet{kwon2023penalty}. 

\begin{restatable}[\citet{kwon2023penalty}]{lem}{lemKwon} \label{lem:Kwon}
Recall that $\varphi(x)$ is the original hyper-objective in \Eqref{hyper-refo}, while $\varphi_{\sigma}(x)$ is the penalized hyper-objective in \Eqref{eq:varphi-sigma}.
Under Assumption \ref{asm:PL}, 
{$\nabla \varphi(x)$ exists and can be defined as the limit  $\lim_{\sigma \rightarrow 0^+} \nabla \varphi_{\sigma}(x)$ }. Moreover, $\varphi_{\sigma}(x)$ is close to  $\varphi(x)$. Formally, for any $0 \le \sigma \le \min\{\rho_g /\rho_f, \overline{\sigma} \}$, we have that
\begin{align*}
    \vert \varphi_{\sigma}(x) - \varphi(x) \vert &= \fO(\sigma \ell \kappa), \quad {\rm and} \quad 
    \Vert \nabla \varphi_{\sigma}(x) - \nabla \varphi(x) \Vert = \fO(\sigma \ell \kappa^3).
\end{align*}
\end{restatable}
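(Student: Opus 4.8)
The plan is to establish the two claims of the lemma in turn, using only Lemma~\ref{lem:Danskin} (the closed form of $\nabla\varphi_\sigma$) and Lemma~\ref{lem:set-Lip} (Lipschitz stability of the solution sets $Y^*_\sigma$). For the value gap I would first note the easy direction: picking $\bar y\in\arg\min_{y\in Y^*(x)}f(x,y)$ makes the penalty term $(g(x,\bar y)-g^*(x))/\sigma$ vanish, so $\varphi_\sigma(x)\le f(x,\bar y)=\varphi(x)$. For the reverse direction, drop the nonnegative penalty term to get $\varphi_\sigma(x)\ge f(x,y_\sigma^*(x))$ for any $y_\sigma^*(x)\in Y_\sigma^*(x)$; Lemma~\ref{lem:set-Lip} with $\sigma_2=0$ supplies $\tilde y\in Y^*(x)$ with $\Vert y_\sigma^*(x)-\tilde y\Vert\le C_f\sigma/\mu$, and the $C_f$-Lipschitzness of $f$ in $y$ gives $\varphi_\sigma(x)\ge f(x,\tilde y)-C_f^2\sigma/\mu\ge\varphi(x)-C_f^2\sigma/\mu$. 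Since $C_f^2/\mu\le\ell^2/\mu=\ell\kappa$, this yields $\vert\varphi_\sigma(x)-\varphi(x)\vert=\fO(\sigma\ell\kappa)$, and the bound is uniform in $x$.

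For the gradient I would start from Lemma~\ref{lem:Danskin}: $\nabla\varphi_\sigma(x)=\nabla_x f(x,y_\sigma^*)+\sigma^{-1}\big(\nabla_x g(x,y_\sigma^*)-\nabla_x g(x,y^*)\big)$ for arbitrary $y_\sigma^*\in Y_\sigma^*(x)$, $y^*\in Y^*(x)$, and choose $y^*$ to be the point of $Y^*(x)$ nearest to $y_\sigma^*$, so $\Vert y_\sigma^*-y^*\Vert\le C_f\sigma/\mu$ by Lemma~\ref{lem:set-Lip}. A second-order Taylor expansion in $y$ (using the $\rho_g$-Lipschitz Hessian of $g$) rewrites this as $\nabla\varphi_\sigma(x)=\nabla_x f(x,y^*)+\nabla_{xy}^2 g(x,y^*)\,v_\sigma+\fO(\sigma\ell\kappa^2)$, where $v_\sigma:=(y_\sigma^*-y^*)/\sigma$ has $\Vert v_\sigma\Vert\le C_f/\mu\le\kappa$. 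Everything then reduces to understanding $\lim_{\sigma\to0^+}\nabla_{xy}^2 g(x,y^*)\,v_\sigma$.

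The crux is the degenerate Hessian. Subtracting the first-order conditions $\sigma\nabla_y f(x,y_\sigma^*)+\nabla_y g(x,y_\sigma^*)=0$ and $\nabla_y g(x,y^*)=0$, Taylor-expanding $\nabla_y g$, and dividing by $\sigma$ gives $Hv_\sigma=-\nabla_y f(x,y^*)+e_\sigma$ with $H:=\nabla_{yy}^2 g(x,y^*)\succeq0$ and $\Vert e_\sigma\Vert=\fO(\ell\sigma\kappa^2)$. Here the PL condition does the work in three places: (i) a Taylor argument at a minimizer of the $\mu$-PL function $g(x,\cdot)$ shows every nonzero eigenvalue of $H$ is at least $\mu$, hence $\Vert H^\dagger\Vert\le1/\mu$; (ii) projecting the identity onto ${\rm Ker}(H)$ and sending $\sigma\to0$ forces $\nabla_y f(x,y^*)\in{\rm Range}(H)$; and (iii) the generalized Danskin theorem applied to $g$ shows $\nabla_x g(x,\cdot)\equiv\nabla g^*(x)$ on $Y^*(x)$, so $\nabla_{xy}^2 g(x,y^*)$ annihilates the tangent space ${\rm Ker}(H)$ of $Y^*(x)$ at $y^*$, making the ${\rm Ker}(H)$-component of $v_\sigma$ irrelevant (alternatively, with $y^*$ the metric projection of $y_\sigma^*$ onto $Y^*(x)$, $v_\sigma$ already lies in ${\rm Range}(H)$ up to $\fO(\sigma)$ curvature). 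Combining, $v_\sigma=-H^\dagger\nabla_y f(x,y^*)+\fO(\sigma\kappa^3)$ on ${\rm Range}(H)$, and applying the bounded operator $\nabla_{xy}^2 g(x,y^*)$ (norm $\le\ell$) gives $\nabla\varphi_\sigma(x)=G(x)+\fO(\sigma\ell\kappa^3)$ uniformly in $x$, where $G(x):=\nabla_x f(x,y^*(x))-\nabla_{xy}^2 g(x,y^*(x))\,[\nabla_{yy}^2 g(x,y^*(x))]^\dagger\,\nabla_y f(x,y^*(x))$ is independent of the choice of $y^*(x)$. In particular $\lim_{\sigma\to0^+}\nabla\varphi_\sigma(x)=G(x)$.

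Finally, with $\varphi_\sigma\to\varphi$ uniformly (first paragraph) and $\nabla\varphi_\sigma\to G$ uniformly with $G$ continuous in $x$, a standard limit--derivative interchange argument shows $\varphi$ is differentiable with $\nabla\varphi=G=\lim_{\sigma\to0^+}\nabla\varphi_\sigma$, and the rate above gives $\Vert\nabla\varphi_\sigma(x)-\nabla\varphi(x)\Vert=\fO(\sigma\ell\kappa^3)$. I expect the third paragraph to be the main obstacle: when $g$ is strongly convex it collapses to a one-line implicit function theorem, whereas under PL the Hessian $\nabla_{yy}^2 g$ is singular at the minimizers, and controlling its pseudoinverse by $1/\mu$, checking range-compatibility of $\nabla_y f$, and discarding the kernel component of $v_\sigma$ --- each a manifestation of the PL / error-bound structure --- is exactly what drives the larger $\kappa$-dependence of the gradient bound $\fO(\sigma\ell\kappa^3)$ relative to the value bound $\fO(\sigma\ell\kappa)$.
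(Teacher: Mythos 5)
Your proof is correct and, interestingly, it does not follow the route the paper points to. The paper's proof of Lemma~\ref{lem:Kwon} is a citation to Theorem 3.8 of \citet{kwon2023penalty} (with a remark that the unconstrained case removes a $\kappa^2$ factor coming from the multiplier perturbation); the associated machinery in this paper, namely Lemma~\ref{lem:nabla-sigma} and Lemma~\ref{lem:nabla-phi-form}, represents $\nabla\varphi_{\sigma'}(x)$ via a mean-value theorem applied to $\sigma\mapsto l(x,\sigma)=\min_y h_\sigma(x,y)$, yielding a pseudoinverse formula at some intermediate $h_\sigma$, and then uses the linear-algebra identity of Lemma~\ref{lem:UV} to compare pseudoinverses at $h_\sigma$ and at $g$. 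You instead start directly from the Danskin form of Lemma~\ref{lem:Danskin}, Taylor-expand the difference quotient $\sigma^{-1}\bigl(\nabla_x g(x,y_\sigma^*)-\nabla_x g(x,y^*)\bigr)$ around the projection $y^*\in Y^*(x)$ using $\rho_g$-Lipschitz Hessians, and solve the perturbed normal equation $Hv_\sigma=-\nabla_y f(x,y^*)+e_\sigma$ with $H=\nabla^2_{yy}g(x,y^*)$. Both routes hit the same three PL-structural facts --- the $\mu$ lower bound on nonzero eigenvalues (Lemma~\ref{lem:PL-singular}), range compatibility $\nabla_y f(x,y^*)\in{\rm Range}(H)$ and ${\rm Range}(\nabla^2_{yx}g)\subseteq{\rm Range}(H)$ (Lemma~\ref{lem:space}, though you also give an alternative tangent-space argument for the latter), and Lipschitz stability of $Y^*_\sigma$ (Lemma~\ref{lem:set-Lip}) --- and land on the same pseudoinverse limit $G(x)$. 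Your route is more elementary in that it avoids the mean-value representation and the pseudoinverse-difference identity, at the cost of tracking several Taylor remainders by hand; the paper's route packages those remainders into the single comparison of Lemma~\ref{lem:UV}. Two small points worth tightening: you should justify that $G(x)$ does not depend on the choice of $y^*\in Y^*(x)$ (this follows since $\nabla\varphi_\sigma(x)$ itself is independent of that choice, so the limit is too), and it is cleaner to get range-membership of $\nabla_y f(x,y^*)$ directly from Lemma~\ref{lem:space} at $\sigma=0$ rather than by the limiting argument you sketch, since your chosen $y^*$ moves with $\sigma$.
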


These two lemmas
make it reasonable to apply
the F${}^2$BA (Algorithm \ref{alg:F2BA}), which applies gradient descent on $\varphi_{\sigma}(x)$ according to \Eqref{eq:nabla-sigma}.
Lemma \ref{lem:Kwon} shows that to find an $\fO(\epsilon)$-stationary point of $\varphi(x)$, it suffices to finds an $\fO(\epsilon)$-stationary point of $\varphi_{\sigma}(x)$ for $\sigma =\fO(\epsilon)$. 
% Lemma \ref{lem:Danskin} gives the explicit form of $\nabla \varphi_{\sigma}(x)$ which allows the algorithm to perform gradient descent on $\varphi_{\sigma}(x)$.
Note that gradient descent can find an $\epsilon$-stationary point of a nonconvex $L$-smooth function within $\fO(L \epsilon^{-2})$ complexity and 
 that $\varphi_{\sigma}(x)$ is $\fO(\sigma^{-1})$-smooth in the worst-case.
Such an analysis illustrates an $\tilde \fO(\sigma^{-1} \epsilon^{-2}) = \tilde \fO(\epsilon^{-3})$ complexity as \citet{kwon2023penalty}.

\begin{algorithm*}[t]   
\caption{F${}^2$BA $\left(x_0,y_0, \eta,\tau, \sigma, T,K\right)$} \label{alg:F2BA}
\begin{algorithmic}[1] 
\STATE $ z_0 = y_0$ \\[1mm]
\STATE \textbf{for} $ t =0,1,\cdots,T-1 $ \\[1mm]
\STATE \quad $ y_t^0 =  y_{t}, ~ z_t^0 = z_{t}$ \\[1mm]
\STATE \quad \textbf{for} $ k =0,1,\cdots,K-1$ \\[1mm]
\STATE \quad \quad $ z_t^{k+1} = z_t^{k}- \tau \nabla_y g(x_t, z_t^k)   $ \\[1mm]
\STATE \quad \quad $ y_t^{k+1} = y_t^k - \tau \left( \sigma \nabla_y f(x_t,y_t^k) +   \nabla_y g(x_t,y_t^k) \right)$ \\[1mm]
\STATE \quad \textbf{end for} \\[1mm]
% \STATE \quad $z_{t+1} = z_t^K,~ y_{t+1} = y_t^K $ \\[1mm]
\STATE \quad $ \hat \nabla \varphi(x_t)= \nabla_x f(x_t,y_{t}^K) +  ( \nabla_x g(x_t,y_{t}^K) - \nabla_x g(x_t,z_{t}^K) ) / \sigma$ \\[1mm]
\STATE \quad  $x_{t+1} = x_t -  \eta  \hat \nabla \varphi(x_t)$ \\[1mm]
% \STATE \quad \textbf{Option II: } $x_{t+1}  = x_t - \eta \hat \nabla \fL_{\lambda}^*(x_t) / \Vert \hat \nabla \fL_{\lambda}^*(x_t) \Vert $ \quad // Normalized Gradient Descent 
\STATE \textbf{end for} \\[1mm]
\end{algorithmic}
\end{algorithm*}

\subsection{Achieving the Near-Optimal Rate for the Deterministic Case} \label{sec:det}

Interestingly, we can show a rate of $\tilde \fO(\epsilon^{-2})$
of Algorithm \ref{alg:F2BA}.
Our improvement comes from a similar technique by \citet{chen2023near}, which shows that
$\varphi(x)$ is $\fO(1)$-smooth for nonconvex-strongly-convex problems. The intuition is that we can restrict our analysis to the strongly convex subspace induced by the PL condition, and apply the result of \citet{chen2023near}.

% by looking at the same algorithm from another viewpoint.
% Instead of viewing the algorithm as finding stationary points of an $\fO(\sigma)$-close proxy $\varphi_{\sigma}(x)$, we view the algorithm as finding stationary points of the original hyper-objective $\varphi(x)$ with an $\fO(\sigma)$-approximate gradient oracle $\nabla \varphi_{\sigma}(x)$.
% The benefit of this viewpoint is that the optimization process is carried out on $\varphi(x)$, which may have a smoother loss landscape than $\varphi_{\sigma}(x)$, as we can prove the following bound on the smoothness of $\varphi(x)$.

% We first prove the following bound on the smoothness of $\varphi(x)$.
\begin{restatable}{lem}{lemgradLip} \label{lem:grad-Lip}
Under  Assumption \ref{asm:PL},
$\varphi(x)$ has $\fO(\ell \kappa^3)$-Lipschitz gradients.
\end{restatable}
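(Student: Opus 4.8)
The plan is to reduce the PL case to the strongly-convex case handled by \citet{chen2023near}, by working on the strongly-convex subspace that the PL condition induces around the solution manifold. First I would recall, from Lemma \ref{lem:Danskin} and Lemma \ref{lem:Kwon}, that $\nabla\varphi(x) = \lim_{\sigma\to 0^+}\nabla\varphi_\sigma(x)$ and that
\begin{align*}
\nabla\varphi_\sigma(x) = \nabla_x f(x,y_\sigma^*(x)) + \frac{\nabla_x g(x,y_\sigma^*(x)) - \nabla_x g(x,y^*(x))}{\sigma},
\end{align*}
where $y_\sigma^*(x)\in Y_\sigma^*(x)$, $y^*(x)\in Y^*(x)$ may be chosen freely. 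So it suffices to show that $\nabla\varphi_\sigma$ is $\fO(\ell\kappa^3)$-Lipschitz with a constant \emph{uniform in $\sigma$}, and then pass to the limit. The key structural fact I would use is that although $h_\sigma(x,\cdot)$ is only PL (hence possibly flat along $\mathrm{Ker}(\nabla_{yy}^2 h_\sigma)$), the quantity $\nabla_x g(x,y_\sigma^*(x))-\nabla_x g(x,y^*(x))$ — and more importantly its sensitivity to $x$ — only depends on the component of $y_\sigma^*(x)-y^*(x)$ that lies in the \emph{non-degenerate} directions. Concretely, along the flat directions $g$ (and $h_\sigma$) is locally affine, so $\nabla_x g$ is insensitive to moving $y$ there; this is precisely the content that lets the strongly-convex-subspace reduction go through.

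The key steps, in order, are: (i) fix $x$ and a point $\bar y\in Y^*(x)$, pass to local coordinates splitting $\BR^{d_y}$ into the range and kernel of $\nabla_{yy}^2 g(x,\bar y)$ (equivalently of $\nabla_{yy}^2 h_\sigma$, which agree up to $\fO(\sigma)$ perturbation), and use Lemma \ref{lem:set-Lip} to control how $Y^*$ and $Y_\sigma^*$ move with $x$; (ii) observe that on the range subspace $h_\sigma(x,\cdot)$ is $\mu$-strongly convex locally, so the restricted minimizer is a genuine implicit function with $\fO(\kappa)$-Lipschitz Jacobian in $x$, and invoke the single-level-style computation of \citet{chen2023near} — which shows the hyper-objective of a nonconvex--strongly-convex problem with these smoothness constants has $\fO(\ell\kappa^3)$-Lipschitz gradient — applied to this restricted problem; (iii) argue the kernel component contributes nothing to $\nabla\varphi_\sigma$ and nothing to its variation, because $\nabla_x g(x,\cdot)$ restricted to the flat directions is (to leading order, controlled by $\rho_g$) constant, and the $1/\sigma$ factor multiplying a difference that is $\fO(\sigma^2)$ in the flat directions stays bounded; (iv) assemble the pieces to get $\Vert\nabla\varphi_\sigma(x)-\nabla\varphi_\sigma(x')\Vert \le \fO(\ell\kappa^3)\Vert x-x'\Vert$ uniformly in $\sigma\le\overline\sigma$, and take $\sigma\to 0^+$ using Lemma \ref{lem:Kwon} to transfer the bound to $\nabla\varphi$.

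I expect the main obstacle to be step (iii) together with the global bookkeeping in step (i): making rigorous the claim that only the strongly-convex subspace matters, when the kernel of $\nabla_{yy}^2 h_\sigma$ can itself rotate with $x$ and with $\sigma$, and when the minimizers $y^*(x)$, $y_\sigma^*(x)$ are only defined up to the whole flat manifold. The clean way around this is to never try to track a specific minimizer through the flat directions: instead bound $\Vert\nabla_x g(x,y_\sigma^*(x)) - \nabla_x g(x,y^*(x))\Vert$ and its $x$-Lipschitz modulus purely in terms of $\mathrm{dist}(y_\sigma^*(x), Y^*(x))$, which by Lemma \ref{lem:set-Lip} is $\fO(\sigma C_f/\mu)$, and in terms of $\rho_g$ (Lipschitz Hessian) to capture the second-order flatness along $Y^*$. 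This turns the whole argument into quantitative estimates involving only $\mathrm{dist}(\cdot,\cdot)$, $\sigma$, and the smoothness constants $C_f,L_f,L_g,\rho_g,\mu$, which is exactly where the $\ell\kappa^3$ scaling comes out after one uses $\kappa=\ell/\mu$ to collect the $1/\mu$ factors.
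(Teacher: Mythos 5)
Your high-level plan --- restrict to the strongly convex subspace that the PL condition induces near $Y^*$ and invoke the Chen et al.\ analysis there --- matches the paper's own proof sketch, but the concrete execution you propose is genuinely different and, as sketched, would run into the very obstacle you flag. The paper does not decompose $\BR^{d_y}$ into range/kernel coordinates, nor does it prove a uniform-in-$\sigma$ Lipschitz bound on $\nabla\varphi_\sigma$ and take a limit. Instead it first establishes the $\sigma$-free, coordinate-free closed form (Lemma~\ref{lem:nabla-phi-form})
\begin{align*}
\nabla\varphi(x) = \nabla_x f(x,y^*(x)) - \nabla_{xy}^2 g(x,y^*(x))\left(\nabla_{yy}^2 g(x,y^*(x))\right)^{\dagger}\nabla_y f(x,y^*(x)),
\end{align*}
valid for any representative $y^*(x)\in Y^*(x)$, and then bounds $\Vert\nabla\varphi(x_1)-\nabla\varphi(x_2)\Vert$ by a direct algebraic expansion. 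Two devices make that expansion go through: Lemma~\ref{lem:space}, which guarantees the vectors entering the formula live in $\mathrm{Range}(\nabla_{yy}^2 g)$, and Lemma~\ref{lem:UV}, the pseudoinverse identity $U(A^\dagger-B^\dagger)V = UA^\dagger(B-A)B^\dagger V$ under those range conditions. Because the pseudoinverse automatically projects onto the range, the rotating kernel of $\nabla_{yy}^2 h_\sigma$ and the ambiguity of $y^*(x)$ along the flat manifold simply never enter; your step~(iii), where you must argue the flat directions contribute only $\fO(\sigma^2)$ to the gradient difference and controllably to its $x$-variation, is precisely the hard content that Lemma~\ref{lem:UV} sidesteps. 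Without an analogue of that identity (or an explicit, rotation-aware tracking of the flat directions using $\rho_g$), your route does not close. Both proofs do share the eigenvalue lower bound $\lambda_{\min}^+(\nabla_{yy}^2 g)\ge\mu$ at minimizers (Lemma~\ref{lem:PL-singular}) and the solution-set stability Lemma~\ref{lem:set-Lip}, which are where the $\kappa$ powers come from.
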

We sketch the proof as follows. 
First, we give the explicit form of $\nabla \varphi(x)$ in Lemma \ref{lem:nabla-phi-form}. Next, we show that the PL condition ensures a strongly convex subspace near any minimum in Lemma \ref{lem:PL-singular}.
Finally,  We apply 
Lemma \ref{lem:space} and \ref{lem:UV} to project the functions onto the strongly convex subspace, where $\varphi(x)$ has $\fO(1)$-Lipschitz gradients to complete the proof.

Based on this lemma, we can readily prove our main theorem in this subsection, stated below.

\begin{restatable}{thm}{ThmYES} \label{thm:YES}
Suppose Assumption \ref{asm:PL} holds. Define
$\Delta:=\varphi(x_0) - \inf_{x \in \BR^{d_x}} \varphi(x)$, $R := {\rm dist}^2 (y_0,Y^*(x)) $ and supposed both $\Delta$, $R$ are bounded.
Set the parameters in Algorithm \ref{alg:F2BA} as
\begin{align*}
    &\eta \asymp \ell^{-1}\kappa^{-3},  \quad \sigma \asymp \min \left\{
    \frac{R}{\kappa}, ~  \frac{\epsilon}{\ell \kappa^3}, ~\frac{L_g}{L_f}, ~\frac{\rho_g}{\rho_f}, ~\overline{\sigma} \right\}, \\
    &~~~~~~~~~~\tau = \frac{1}{\sigma L_f+  L_g}, \quad K \asymp \frac{L_g }{\mu} \log \left( \frac{L_g}{\mu \sigma} \right),
\end{align*}
then it
can find an $\epsilon$-first-order stationary point of $\varphi(x)$ within $T = \fO(\ell \kappa^3 \epsilon^{-2})$ iterations, where $\ell$,  $\kappa$ are defined in Assumption~\ref{asm:PL}. The total number of first-order oracle calls is bounded by $\fO( \ell \kappa^4 \epsilon^{-2} \log(\nicefrac{\rho_f \ell \kappa }{\epsilon})) $.
\end{restatable}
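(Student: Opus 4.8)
\textbf{Proof proposal for Theorem \ref{thm:YES}.}

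The plan is to track the progress of the outer iterates $\{x_t\}$ under the inexact gradient descent update $x_{t+1} = x_t - \eta \hat\nabla\varphi(x_t)$, viewing it as approximate gradient descent on $\varphi_\sigma(x)$, and then to transfer the guarantee to $\varphi(x)$ using Lemma \ref{lem:Kwon}. First I would invoke Lemma \ref{lem:grad-Lip} to fix $L_\varphi = \fO(\ell\kappa^3)$ as the smoothness constant of $\varphi$, and by the same argument (restricting to the strongly convex subspace) record that $\varphi_\sigma$ is also $\fO(\ell\kappa^3)$-smooth uniformly for small $\sigma$; this is what justifies the step size $\eta \asymp \ell^{-1}\kappa^{-3}$. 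The key quantity to control is the gradient estimation error $e_t := \hat\nabla\varphi(x_t) - \nabla\varphi_\sigma(x_t)$. Using the closed form in Lemma \ref{lem:Danskin}, $\nabla\varphi_\sigma(x) = \nabla_x f(x,y) + (\nabla_x g(x,y_\sigma^*(x)) - \nabla_x g(x,y^*(x)))/\sigma$, and comparing with the algorithm's estimate built from $y_t^K$ and $z_t^K$, we get
\begin{align*}
\Vert e_t \Vert \le C_f\,\Vert y_t^K - y_\sigma^*(x_t)\Vert + \frac{L_g}{\sigma}\left(\Vert y_t^K - y_\sigma^*(x_t)\Vert + \Vert z_t^K - y^*(x_t)\Vert\right).
\end{align*}
So the estimation error is governed by how close the inner iterates get to their targets.

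Next I would bound the inner-loop errors. Both inner loops are gradient descent: $z_t^{k+1} = z_t^k - \tau\nabla_y g(x_t,z_t^k)$ on the $\mu$-PL function $g(x_t,\cdot)$, and $y_t^{k+1} = y_t^k - \tau(\sigma\nabla_y f + \nabla_y g)(x_t,y_t^k)$ on the $\mu$-PL penalty function $h_\sigma(x_t,\cdot)$ (PL uniformly in $\sigma$ by Assumption \ref{asm:PL}(a)), both with $(\sigma L_f + L_g)$-Lipschitz gradients. With $\tau = 1/(\sigma L_f + L_g)$ and $K \asymp (L_g/\mu)\log(L_g/(\mu\sigma))$, the standard PL descent lemma gives linear convergence of the function value, hence (via the quadratic-growth consequence of PL) $\Vert y_t^K - y_\sigma^*(x_t)\Vert^2$ and $\Vert z_t^K - y^*(x_t)\Vert^2$ shrink geometrically down to $\fO(\sigma^2)$ relative to the initial distances, provided the warm-start distances $\mathrm{dist}(y_t,Y_\sigma^*(x_t))$ and $\mathrm{dist}(z_t,Y^*(x_t))$ stay bounded. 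The warm-start boundedness is an induction: the sets $Y^*(x_t)$ and $Y_\sigma^*(x_t)$ move by at most $\fO((\sigma L_f + L_g)/\mu)\Vert x_{t+1} - x_t\Vert = \fO(\eta\kappa\Vert\hat\nabla\varphi(x_t)\Vert)$ per step by Lemma \ref{lem:set-Lip}, and one shows this drift is absorbed by the geometric contraction of the inner loop (this is exactly the warm-start amortization in \citet{chen2023near}); choosing $\sigma \lesssim R/\kappa$ makes the very first warm start $R = \mathrm{dist}^2(y_0,Y^*(x))$ consistent with the invariant. The upshot is $\Vert e_t\Vert = \fO(\epsilon)$ for a suitable choice of the $\log$ factor in $K$, uniformly over $t \le T$.

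With $L_\varphi$-smoothness of $\varphi_\sigma$, step size $\eta \asymp 1/L_\varphi$, and $\Vert e_t\Vert \le \epsilon/4$, the usual descent inequality gives
\begin{align*}
\varphi_\sigma(x_{t+1}) \le \varphi_\sigma(x_t) - \frac{\eta}{4}\Vert\nabla\varphi_\sigma(x_t)\Vert^2 + \eta\Vert e_t\Vert^2,
\end{align*}
so telescoping over $t = 0,\dots,T-1$ and using that $\varphi_\sigma \ge \varphi - \fO(\sigma\ell\kappa) \ge \inf\varphi - \fO(\epsilon)$ by Lemma \ref{lem:Kwon} (with $\sigma \lesssim \epsilon/(\ell\kappa^3) \le \epsilon/(\ell\kappa)$) yields $\min_{t<T}\Vert\nabla\varphi_\sigma(x_t)\Vert^2 = \fO((\Delta + \epsilon)/(\eta T)) + \fO(\epsilon^2)$, which is $\fO(\epsilon^2)$ once $T = \fO(L_\varphi\Delta\epsilon^{-2}) = \fO(\ell\kappa^3\Delta\epsilon^{-2})$. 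Finally, since $\sigma \lesssim \min\{\epsilon/(\ell\kappa^3),\rho_g/\rho_f,\overline\sigma\}$, Lemma \ref{lem:Kwon} gives $\Vert\nabla\varphi(x_t) - \nabla\varphi_\sigma(x_t)\Vert = \fO(\sigma\ell\kappa^3) = \fO(\epsilon)$, so the best iterate is an $\fO(\epsilon)$-stationary point of $\varphi$; rescaling constants gives the stated $\epsilon$. The oracle count is $\fO(TK) = \fO(\ell\kappa^3\epsilon^{-2}\cdot\kappa\log(\rho_f\ell\kappa/\epsilon)) = \fO(\ell\kappa^4\epsilon^{-2}\log(\rho_f\ell\kappa/\epsilon))$.

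\textbf{Main obstacle.} The delicate part is the warm-start induction: I must simultaneously maintain that (i) the inner iterates are $\fO(\epsilon)$-accurate and (ii) the warm-start distances stay $\fO(1)$-bounded, while the outer step perturbs both target sets $Y^*(x_t)$ and $Y_\sigma^*(x_t)$. Getting the bookkeeping right — in particular that the per-step set drift $\fO(\eta\kappa\Vert\hat\nabla\varphi(x_t)\Vert)$ times $K$ iterations is dominated by the geometric contraction factor, and that the constants do not blow up the $\kappa$-dependence beyond $\kappa^4$ — is where the real work lies; the smoothness bound Lemma \ref{lem:grad-Lip} and Danskin's formula Lemma \ref{lem:Danskin} then make the outer analysis routine.
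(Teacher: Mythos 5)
Your outline is correct in spirit and reaches the same conclusion by essentially the same route: decompose the outer update as inexact gradient descent driven by $\nabla\varphi_\sigma$, bound the gradient-estimation error via the distances of $y_t^K$ and $z_t^K$ to $Y_\sigma^*(x_t)$ and $Y^*(x_t)$, contract these distances by Lemma~\ref{lem:PL-GD}, and transfer back to $\varphi$ via Lemma~\ref{lem:Kwon}. The step-size choice $\eta\asymp\ell^{-1}\kappa^{-3}$ and the final rate $T=\fO(\ell\kappa^3\epsilon^{-2})$ follow from the $\fO(\ell\kappa^3)$ smoothness of Lemma~\ref{lem:grad-Lip}, exactly as you say. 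Where you deviate from the paper --- and where your proposal is incomplete --- is the handling of the warm-start drift. You write the descent step as
$\varphi_\sigma(x_{t+1}) \le \varphi_\sigma(x_t) - \frac{\eta}{4}\Vert\nabla\varphi_\sigma(x_t)\Vert^2 + \eta\Vert e_t\Vert^2$,
which discards the negative term $-\frac{1}{4\eta}\Vert x_{t+1}-x_t\Vert^2$, and you then plan to show $\Vert e_t\Vert=\fO(\epsilon)$ uniformly over $t$ by a per-iteration induction on $\delta_t := \mathrm{dist}^2(y_t^0,Y_\sigma^*(x_t)) + \mathrm{dist}^2(z_t^0,Y^*(x_t))$. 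This induction requires a uniform bound on $\Vert x_{t+1}-x_t\Vert = \eta\Vert\hat\nabla\varphi(x_t)\Vert$, which in turn needs an a priori bound on $\Vert\nabla\varphi(x_t)\Vert$; Assumption~\ref{asm:PL} only gives $f$ Lipschitz in $y$, so $\nabla_x f$ (and hence $\nabla\varphi$) is not globally bounded, leaving a circular dependency between bounded iterate movement and bounded warm-start distance.

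The paper resolves this precisely by keeping the $-\frac{1}{4\eta}\Vert x_{t+1}-x_t\Vert^2$ term in the descent inequality (Eq.~\ref{eq:des}) and propagating the recursion $\delta_{t+1}\le\frac12\delta_t + \frac{16L_g^2}{\mu^2}\Vert x_{t+1}-x_t\Vert^2$ through a telescoping sum: after summing, $\sum_t\delta_t$ is controlled by $\delta_0 + \frac{32L_g^2}{\mu^2}\sum_t\Vert x_{t+1}-x_t\Vert^2$, and the latter sum is then cancelled by the kept negative term once $\gamma:=\frac{L_g^3}{\mu\sigma^2}\exp(-\frac{\mu K}{2L_g})$ is made small enough via the choice of $K$. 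This is a global, aggregate argument and needs no uniform per-iteration bound on $\delta_t$ or $\Vert e_t\Vert$, and no bound on $\Vert\nabla\varphi\Vert$. Your sketch gestures at this with the phrase ``warm-start amortization in \citet{chen2023near}'', which is the right reference, but the descent inequality you write down has already thrown away the term that makes that amortization go through. If you reinstate $-\frac{1}{4\eta}\Vert x_{t+1}-x_t\Vert^2$, carry the recursion on $\delta_t$ explicitly, and pair the resulting $\sum_t\delta_t$ against $\sum_t\Vert x_{t+1}-x_t\Vert^2$ rather than aiming for a per-$t$ bound, your argument becomes the paper's.
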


The above complexity matches the optimal rate gradient descent for single-level nonconvex minimization by \citet{carmon2021lower}, except for an additional logarithmic factor.  

%as ,  we can still claim that our upper bound is near-optimal.
% {\color{red} check the remark and rewrite again.  Should smooth $f$ such that it also has Lipschitz Hessians?
% }

\begin{rmk}
One small difference between the assumptions in Theorem \ref{thm:YES} and those of the constructed hard instance in Theorem \ref{thm:NO}.
is that the hard instance only satisfies the Lipschitz-type conditions in $\fX \times \BR^{d_y}$, where $\fX$ is the sublevel set of $x_0$. However, indeed, the upper bound also only requires the Lipschitz-type conditions in the sublevel set by the descent lemma $\varphi(x_{t+1}) \le \varphi(x_t)$. 
% Another difference is that the upper bound requires $f$ to have $\rho_f$-Lipschitz Hessians while the Hessian Lipschitz constant in the hard instance may depend on $d_y$. However, the upper bound only has a logarithmic dependency on $\rho_f$. Such a difference is also mild.

% Besides the above two differences, the only remaining difference is that the lower-level function in the hard instance is only convex, while Assumption \ref{asm:PL}a required by the upper bound subsumes local strong convexity. Such a difference leads to a separation of intractability and tractability.

% Since the algorithm is applying gradient descent on $\varphi(x)$, the descent lemma indicates that
% the Lipschitz-type conditions in Assumption \ref{asm:PL} are only required to hold in the sublevel set $\fX = \{ x: \varphi(x) \le \varphi(x_0)\}$. Therefore one can check the only difference between the assumptions in Theorem \ref{thm:YES} and those in the hardness result Theorem \ref{thm:NO} is the PL condition (Assumption \ref{asm:PL}a).
\end{rmk}

\subsection{The Extensions of  F${}^2$BA to the Stochastic Case} \label{sec:stoc}

\begin{algorithm*}[t]  
\caption{F${}^2$BSA $\left(x_0,y_0, \delta_0, \eta,\tau, \sigma, T, B\right)$} \label{alg:stoc-F2BA}
\begin{algorithmic}[1] 
\STATE $ z_0 = y_0$ \\[1mm]
\STATE \textbf{for} $ t =0,1,\cdots,T-1 $ \\[1mm]
\STATE \quad $ y_t^0 =  y_{t}, ~ z_t^0 = z_{t}$ \\[1mm]
\STATE \quad Set $K_t$ as \Eqref{eq:para-stoc-F2BA} based on the value of $\delta_t$ \\[1mm]
\STATE \quad \textbf{for} $ k =0,1,\cdots,K_t-1$ \\[1mm]
% \STATE \quad \quad Sample minibatch $\fB_t $ with size $B$ \\[1mm]
\STATE \quad \quad $ z_t^{k+1} = z_t^{k}- \tau \nabla_y g(x_t, z_t^k; B)   $ \\[1mm]
\STATE \quad \quad $ y_t^{k+1} = y_t^k - \tau \left( \sigma \nabla_y f(x_t,y_t^k; B) +   \nabla_y g(x_t,y_t^k;B) \right)$ \\[1mm]
\STATE \quad \textbf{end for} \\[1mm]
% \STATE \quad $z_{t+1} = z_t^K,~ y_{t+1} = y_t^K $ \\[1mm]
\STATE \quad $ \hat \nabla \varphi (x_t)= \nabla_x f(x_t,y_{t}^K; B) +  ( \nabla_x g(x_t,y_{t}^K; B) - \nabla_x g(x_t,z_{t}^K; B) ) / \sigma$ \\[1mm]
\STATE \quad  $x_{t+1} = x_t -  \eta  \hat \nabla \varphi (x_t)$ \\[1mm]
% \STATE \quad \textbf{Option II: } $x_{t+1}  = x_t - \eta \hat \nabla \fL_{\lambda}^*(x_t) / \Vert \hat \nabla \fL_{\lambda}^*(x_t) \Vert $ \quad // Normalized Gradient Descent 
\STATE \quad Calculate $\delta_{t+1}$ as \Eqref{eq:delta_t} based on $\delta_t$, $x_{t+1}$ and $x_t$ \\[1mm]
\STATE \textbf{end for} \\[1mm]
\end{algorithmic}
\end{algorithm*}

Our analysis can also lead to better bounds on the stochastic case, where $f(x,y),g(x,y)$ are both the expectation of some stochastic components $F(x,y;\xi)$ and $G(x,y;\zeta)$, indexed by random vectors $\xi$ and $\zeta$:
\begin{align*}
    f(x,y) := \BE_{\xi} \left[ F(x,y ; \xi) \right], \quad g(x,y) := \BE_{\zeta} \left[ G(x,y ; \zeta) \right].
\end{align*}
An algorithm has access to the stochastic gradients, with the following assumptions.
\begin{asm} \label{asm:stochastic}We study the stochastic problem under the two assumptions below.
\begin{enumerate}[label=(\alph*)] 
    \item Suppose the stochastic gradients  are unbiased: 
    \begin{align*}
        \BE_{\xi} \left[ \nabla F(x,y;\xi) \right] = \nabla f(x,y), \quad \BE_{\zeta} \left[ \nabla G(x,y;\zeta) \right] = \nabla g(x,y);
    \end{align*}
    \item Suppose the stochastic gradients  have bounded variance. In other words,  there exist some constants $M_f, M_g>0$ such that
\begin{align*}
    \BE_{\xi} \left[ \Vert \nabla F(x,y;\xi) - \nabla f(x,y) \Vert^2 \right] \le M_f^2, \quad \BE_{\zeta} \left[ \Vert \nabla G(x,y;\zeta) - \nabla g(x,y) \Vert^2 \right] \le M_g^2.
\end{align*}
\end{enumerate}
\end{asm}

% Note that both the unbiasedness assumption and bounded variance assumption are standard in the literature of stochastic optimization~\citep{ghadimi2013stochastic}.

Under these assumptions, the natural extension of Algorithm \ref{alg:F2BA} to the stochastic setting is to replace the full-batch gradient in Algorithm \ref{alg:F2BA} to the mini-batch gradient, defined as follows.

\begin{dfn}
Given mini-batch size $B$. We define the gradient estimators for the upper-level and lower-level functions using the notations below,
\begin{align*}
    \nabla f(x,y; B) = \frac{1}{B} \sum_{i=1}^B \nabla f(x,y;\xi_i), \quad 
    \nabla g(x,y;B) = \frac{1}{B} \sum_{i=1}^B \nabla f(x,y;\zeta_i),
\end{align*}
where both $\xi_i$ and $\zeta_i$ are sampled i.i.d. 
\end{dfn}

 By replacing all the full-batch gradients in deterministic Algorithm~\ref{alg:F2BA}, we get the stochastic counterpart Algorithm \ref{alg:stoc-F2BA}, namely F${}^2$BSA (Fully First-order Bilevel Stochastic  Approximation). 
 %{\color{red} The name of this algorithm appeared first in \citet{kwon2023fully}  when studying nonconvex-strongly-convex bilevel problems.}
By additionally taking into account the error from stochastic gradients, we can extend Theorem \ref{thm:YES} to also tackle the stochastic case, yielding the following theorem.

\begin{restatable}{thm}{ThmStoc} \label{thm:stoc-F2BA}
Suppose Assumption \ref{asm:PL} and \ref{asm:stochastic} hold.  Define
$\Delta:=\varphi(x_0) - \inf_{x \in \BR^{d_x}} \varphi(x)$, $R := {\rm dist}^2 (y_0,Y^*(x)) $ and supposed both $\Delta$, $R$ are bounded.
Set the parameters in Algorithm \ref{alg:stoc-F2BA} as
\begin{align} \label{eq:para-stoc-F2BA}
\begin{split}
    & ~~~~~~~~~~~~~~~\eta \asymp \ell^{-1}\kappa^{-3}, \quad \sigma \asymp \min \left\{ \frac{R}{\kappa}, ~  \frac{\epsilon}{\ell \kappa^3}, ~ \frac{L_g}{L_f}, ~\frac{\rho_g}{\rho_f},~ \overline{\sigma} \right\}, \\
&      \tau = \frac{1}{\sigma L_f+  L_g},\quad {B \asymp \frac{L_g^2 \left(\sigma^2 M_f^2 + M_g^2 \right)}{\mu^2 \sigma^2 \epsilon^2} }  , \quad K_t \asymp \frac{L_g }{\mu} \log \left( \frac{L_g^3 \delta_t}{\mu \sigma^2 \epsilon^2} \right),
\end{split}
\end{align}
where $\delta_t$ is defined via the recursion
\begin{align} \label{eq:delta_t}
    \delta_{t+1} 
    &\le \frac{1}{2} \delta_t +\frac{8 L_g^2}{\mu^2} \Vert x_{t+1} - x_t \Vert^2 + \fO \left(\frac{\sigma^2 \epsilon^2}{L_g^2} \right), \quad \delta_0 \asymp R.
\end{align}
Then Algorithm \ref{alg:stoc-F2BA}
can find an $\epsilon$-first-order stationary point of $\varphi(x)$ in expectation within $T = \fO(\ell \kappa^3 \epsilon^{-2})$ iterations, where $\ell$,  $\kappa$ are defined in Definition \ref{asm:PL}. The total number of stochastic first-order oracle calls is bounded by
\begin{align*}
\tilde \fO( \kappa T B) = 
\begin{cases}
    \fO(\ell \kappa^4 \epsilon^{-2} \log (\nicefrac{\rho_f \ell \kappa}{\epsilon})  ), &  M_f =0, M_g=0;  \\
    \fO(\ell \kappa^6 \epsilon^{-4} \log (\nicefrac{\rho_f \ell \kappa}{\epsilon})), & M_f >0 , M_g = 0;\\
    {\fO(\ell^3 \kappa^{12} \epsilon^{-6} \log (\nicefrac{\rho_f \ell \kappa}{\epsilon})) }, & M_f>0, M_g >0.
\end{cases}
\end{align*}
\end{restatable}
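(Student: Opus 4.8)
The proof of Theorem~\ref{thm:stoc-F2BA} is a stochastic perturbation of the deterministic analysis behind Theorem~\ref{thm:YES}, so the plan is to carry over that descent-lemma argument while carefully tracking the two new error sources: (i) the mini-batch noise in all four gradient oracles, controlled by Assumption~\ref{asm:stochastic} and the batch size $B$, and (ii) the fact that the inner loop now only runs $K_t$ steps of noisy gradient descent on the strongly convex (in the PL subspace) penalty function $h_\sigma$, so $y_t^K, z_t^K$ track $y_\sigma^*(x_t), y^*(x_t)$ only up to an error $\delta_t$ that itself evolves via the recursion \eqref{eq:delta_t}. First I would record the building blocks from Section~\ref{sec:det}: Lemma~\ref{lem:grad-Lip} giving $L_\varphi = \fO(\ell\kappa^3)$-smoothness of $\varphi$, Lemma~\ref{lem:Kwon} giving $\|\nabla\varphi_\sigma - \nabla\varphi\| = \fO(\sigma\ell\kappa^3)$, and the closed form \eqref{eq:nabla-sigma} of $\nabla\varphi_\sigma$. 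With $\sigma \asymp \epsilon/(\ell\kappa^3)$ the bias term is $\fO(\epsilon)$, so it suffices to drive $\|\nabla\varphi_\sigma(x_t)\|$ below $\fO(\epsilon)$ on average.

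\textbf{Key steps.} (1) \emph{Estimator error decomposition.} Write $\hat\nabla\varphi(x_t) - \nabla\varphi_\sigma(x_t)$ as a sum of a ``tracking'' part coming from $y_t^K \ne y_\sigma^*(x_t)$, $z_t^K \ne y^*(x_t)$ and a ``noise'' part coming from replacing full-batch by mini-batch gradients. Using $L_g$-smoothness of $g$ and $L_f$-smoothness of $f$, the tracking part is $\fO((L_g/\sigma)\cdot \mathrm{dist}(y_t^K, \{y^*,y_\sigma^*\})) = \fO((L_g/\sigma)\sqrt{\delta_t})$; the noise part has conditional variance $\fO((\sigma^2 M_f^2 + M_g^2)/(\sigma^2 B))$ by Assumption~\ref{asm:stochastic}(b) and independence of the batches. (2) \emph{Inner-loop contraction.} Since $h_\sigma$ is $\mu$-PL and $(\sigma L_f + L_g)$-smooth, noisy GD with step $\tau = 1/(\sigma L_f + L_g)$ contracts the distance to the solution set geometrically with rate $(1-\mu\tau)$ per step plus an $\fO(\tau^2 M^2/B)$ noise floor; iterating $K_t \asymp (L_g/\mu)\log(\cdot)$ times gives exactly the recursion \eqref{eq:delta_t} for $\delta_{t+1}$ in terms of $\delta_t$ and $\|x_{t+1}-x_t\|^2$, after using $\|y_{t+1}^0 - y_t^0\| \le \|y_t^K - y_\sigma^*(x_t)\| + \mathrm{dist}(y_\sigma^*(x_t), y_\sigma^*(x_{t+1})) + \cdots$ together with Lemma~\ref{lem:set-Lip} (the $\frac{8L_g^2}{\mu^2}\|x_{t+1}-x_t\|^2$ term). (3) \emph{Outer-loop descent with coupled potential.} Apply the $L_\varphi$-descent lemma to $\varphi_\sigma$ along $x_{t+1} = x_t - \eta\hat\nabla\varphi(x_t)$, take conditional expectation, and form the Lyapunov function $\Phi_t = \E[\varphi_\sigma(x_t)] + c\,\E[\delta_t]$ for a suitable constant $c \asymp \eta L_g^2/(\sigma^2 \mu)$ so that the $\frac{1}{2}\delta_t$ contraction absorbs the $\frac{8L_g^2}{\mu^2}\|x_{t+1}-x_t\|^2 = \fO(\eta^2 \|\hat\nabla\varphi\|^2)$ coupling term; with $\eta \asymp \ell^{-1}\kappa^{-3}$ small enough this telescopes. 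Summing over $t = 0,\dots,T-1$ yields $\frac{1}{T}\sum_t \E\|\nabla\varphi_\sigma(x_t)\|^2 \le \fO(L_\varphi\Delta/T) + \fO(\delta_{\mathrm{floor}}L_g^2/\sigma^2) + \fO((\sigma^2 M_f^2 + M_g^2)/(\sigma^2 B))$. (4) \emph{Parameter balancing.} Choosing the $\fO(\sigma^2\epsilon^2/L_g^2)$ noise floor in \eqref{eq:delta_t} and $B \asymp L_g(\sigma^2 M_f^2 + M_g^2)/(\mu\sigma^2\epsilon^2)$ makes the last two terms $\fO(\epsilon^2)$; then $T = \fO(L_\varphi\Delta\epsilon^{-2}) = \fO(\ell\kappa^3\epsilon^{-2})$. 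Finally multiply $T$ by the per-iteration oracle cost $B\cdot K_t$: with $M_f=M_g=0$ one has $B=\fO(1)$ and recovers the deterministic $\fO(\ell\kappa^4\epsilon^{-2}\log)$; with only $M_g=0$... actually $M_f>0, M_g=0$ gives $B\asymp L_g M_f^2/(\mu\epsilon^2)$, and since $\sigma\asymp\epsilon/(\ell\kappa^3)$ the total is $T\cdot B\cdot K = \fO(\ell\kappa^5\epsilon^{-4}\log)$; the fully stochastic case has $B\asymp L_g M_g^2/(\mu\sigma^2\epsilon^2) = \fO(\kappa\cdot(\ell\kappa^3)^2/\epsilon^4) \cdot \fO(1/\mu)$, injecting the extra $\sigma^{-2}\asymp\ell^2\kappa^6/\epsilon^2$ factor and pushing the exponent of $\kappa$ up to $11$ and of $\epsilon$ to $6$.

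\textbf{Main obstacle.} The delicate point is the coupled recursion in step~(3): the inner-loop error $\delta_{t+1}$ depends on the \emph{actual} step $\|x_{t+1}-x_t\|^2 = \eta^2\|\hat\nabla\varphi(x_t)\|^2$, which in turn depends on $\delta_t$, so one cannot bound the two sequences separately. The fix is the Lyapunov coupling $\varphi_\sigma(x_t) + c\delta_t$ with a constant $c$ scaled precisely so that (a) the geometric decay $\frac12\delta_t$ dominates the new contribution $\fO(\eta^2 L_g^2\mu^{-2}\|\hat\nabla\varphi\|^2)$, which forces a constraint $\eta \lesssim \sigma^2\mu^2/(L_g^2 \cdot L_\varphi \cdot \text{stuff})$ that must be checked consistent with $\eta \asymp \ell^{-1}\kappa^{-3}$ and $\sigma\asymp\epsilon/(\ell\kappa^3)$ — this is where several powers of $\kappa$ enter — and (b) the residual variance terms after telescoping are still $\fO(\epsilon^2)$. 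Getting all the $\kappa$-powers to land exactly on $\kappa^5$ and $\kappa^{11}$ requires bookkeeping the $\sigma^{-1}$ blow-up in the hyper-gradient estimator against the $\sigma$-shrinkage forced by the bias bound, but no new idea beyond steps (1)--(4) is needed.
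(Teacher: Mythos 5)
Your proposal is correct and takes essentially the same route as the paper: the descent lemma on the hyper-objective at the outer level, SGD contraction with a noise floor on the $\mu$-PL penalty function at the inner level (Lemma~\ref{lem:PL-SGD}), the recursion \eqref{eq:delta_t} tying the tracking error to $\Vert x_{t+1}-x_t\Vert^2$ via Lemma~\ref{lem:set-Lip}, and the final parameter balancing of $B$, $K_t$, $\sigma$, $\eta$. The paper does not introduce an explicit Lyapunov function $\Phi_t = \varphi_\sigma(x_t) + c\,\delta_t$; instead it telescopes the descent inequality \eqref{eq:des} and the $\delta_t$ recursion separately and checks that the coefficient of $\sum_t \Vert x_{t+1}-x_t\Vert^2$ remains negative after combining the two sums --- this is the same manipulation rearranged, so the two presentations are equivalent.

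One small point of clarification on your ``main obstacle'' paragraph: in the stochastic proof the paper sidesteps the tight coupling you worry about by choosing $K_t$ \emph{adaptively} (as a function of the algorithm's running estimate $\delta_t$) so that the per-iteration tracking error is already driven to $\fO(\sigma^2\epsilon^2/L_g^2)$ before the outer step, contributing only an $\fO(\eta\epsilon^2)$ error to the descent lemma. Consequently the only constraint the $\delta_t$ recursion imposes is that $c\,(8L_g^2/\mu^2)$ be dominated by the negative $\tfrac{1}{4\eta}\Vert x_{t+1}-x_t\Vert^2$ term, i.e.\ $c \lesssim \mu^2/(\eta L_g^2)$, and this does \emph{not} force an additional constraint of the form $\eta \lesssim \sigma^2\cdot(\cdot)$ as you feared; $\eta \asymp \ell^{-1}\kappa^{-3}$ comes entirely from $L_\varphi = \fO(\ell\kappa^3)$ and is independent of $\sigma$. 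The final oracle count then follows by bounding $\sum_t K_t$ via concavity of $\log$, exactly as you sketch.
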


%{\color{red} check dependency of $\kappa$ in $B$ and fully stochastic case.}

In the deterministic case $(M_f=0,M_g=0)$, this result
recovers the $\tilde \fO(\epsilon^{-2})$ rate by Algorithm \ref{alg:F2BA}. 
In the partially stochastic case ($ M_f>0, M_g=0$), the $\tilde \fO(\epsilon^{-4})$ complexity is also near-optimal~\citep{arjevani2023lower}. In the fully stochastic case ($M_f>0, M_g>0$), our $\tilde \fO(\epsilon^{-6})$ upper bound is also better than the $\tilde \fO(\epsilon^{-7})$ upper bound by \citet{kwon2023penalty}. 

%But the bound still has a high dependency on $\ell,\kappa$, and $\epsilon$, and its tightness remains open. 

%Furthermore, we use a large mini-batch size ($B \asymp \fO(\epsilon^{-2})$ in the partially stochastic case, and $B \asymp \fO(\epsilon^{-4})$ in the fully stochastic case). Although the proof of large batch size can also be extended to small mini-batch size (or even the case when $B=1$), 

\section{Conclusions and Future Works}

This paper investigates bilevel optimization without the typical lower-level strong convexity assumption.
We have shown that finding points with small hyper-gradients is computationally hard for nonconvex-convex bilevel problems, but easy for nonconvex-PL bilevel problems where simple first-order algorithms can achieve fast convergence rates.

It will be interesting to study bilevel problems with lower-level functions beyond the PL condition in the future. One possible direction is to consider the 
Kurdyka-Łojasiewicz (KL) condition~\citep{fatkhullin2022sharp}, which is more general than the PL condition and it holds for any semialgebraic functions.
Recent works have shown non-asymptotic convergence for nonconvex-KL minimax optimization problems~\citep{li2022nonsmooth,zheng2022doubly}, but the nonconvex-KL bilevel optimization problem remains challenging. Besides these conditions, we hope future works to exploit the problem structure that arises from the applications of bilevel optimization and identify other tractable conditions that can break the limit of our hardness result.
\bibliography{sample}

\clearpage
\appendix

\section{Related Works} \label{apx:related}

% There have been many works studying bilevel optimization. Here we only review related works focusing on non-asymptotic analysis.

Many existing works for non-asymptotic analysis for bilevel optimization assume the lower-level function is strongly convex. A natural method is to estimate $\nabla y^*(x)$, and plug into \Eqref{grad-sc} to get an estimator of $\nabla \varphi(x)$ to apply gradient descent.
ITD (ITerative Differentiation) \citep{gould2016differentiating, franceschi2017forward,shaban2019truncated,bolte2021nonsmooth} approximates $\nabla y^*(x)$ by ${\partial y^K(x)}  / {\partial x}$, where $y^K(x)$ is $K$-steps of gradient descent. 
%and its derivative can be automatically calculated by automatic differentiation framework such as Pytorch~\citep{paszke2019pytorch}. 
AID (Approximate Implicit Differentiation) \citep{domke2012generic,ghadimi2018approximation,pedregosa2016hyperparameter,franceschi2018bilevel,grazzi2020iteration,ji2021bilevel,dagreou2022framework,arbel2021amortized} explicitly solve $\nabla y^*(x) =  \left[ ( \nabla_{yy}^2 g)^{-1} \nabla_{yx}^2 g \right] (x,y^*(x))$ as a linear equation. However, both AID and ITD require Hessian-vector product oracles, and their convergence analysis is typically restricted to the case when the lower-level function is strongly convex.
\citet{arbel2022non} extended implicit differentiation to the parametric Morse-Bott function, a class of nonconvex functions with local PL properties. They also proposed an 
algorithm that combines AID and ITD and showed its limit points must be an equilibrium of the concept of BGS (Bilevel Game with Selection) that they introduced.
\citet{yang2024achieving} proposed a first-order algorithm FdeHBO by estimating Jacobian/Hessian vector-product in AID with gradient differences. 

Recently, F${}^2$BA (Fully First-order Bilevel Approximation) has gained widespread attention. This method uses the value-function-based reformulation~\citep{ye1997exact} of Problem (\ref{prob:main}):
\begin{align} \label{value-refo}
    \min_{x \in \BR^{d_x}, y \in \BR^{d_y}} f(x,y), \quad {\rm s.t.} \quad g(x,y) = g^*(x)
\end{align}
and 
applies gradient descent on the penalty function. Several studies have demonstrated two notable advantages of this method over AID and ITD: Firstly, it only requires gradient oracles which are much cheaper than Hessian-vector product oracles~\citep{liu2022bome}. Secondly, it performs well empirically even in cases where the lower-level function is nonconvex~\citep{liu2021value,shen2023penalty}. By establishing the equivalence between the KKT (Karush–Kuhn–Tucker) point of Problem (\ref{value-refo}) and the stationary point of Problem (\ref{hyper-refo}), \citet{kwon2023fully} showed that F${}^2$BA can find an $\epsilon$-stationary point of $\varphi(x)$ with $\tilde \fO(\epsilon^{-3})$ first-order oracle calls 
when the lower-level function is strongly convex. 
\citet{chen2023near} further improved the rate to
$\tilde \fO(\epsilon^{-2})$ by a more careful landscape analysis of the penalty function.
\citet{kwon2023penalty,yao2024constrained} proposed proximal variants of F${}^2$BA by replacing the lower-level value function with its Moreau envelope to tackle lower-level constraints. \citet{kwon2023penalty} 
showed 
a $\tilde \fO(\epsilon^{-3})$ complexity when the penalty function satisfies the Prox-EB condition (which is equivalent to the PL condition in the unconstrained case as Proposition~\ref{prop:Prox-EB-PL}).

GALET (Generalized
ALternating mEthod for bilevel opTimization) by \citet{xiao2023generalized} can also tackle nonconvex-PL bilevel optimization problems. It uses Hessian-vector-product oracles to solve the following gradient-based reformulation of Problem (\ref{prob:main}):
\begin{align} \label{grad-refo}
    \min_{x \in \BR^{d_x}, y \in \BR^{d_y}} f(x,y), \quad {\rm s.t.} \quad \nabla_y g(x,y) = 0.
\end{align}
\citet{xiao2023generalized} showed GALET can converge to a KKT point of Problem (\ref{grad-refo}) with a rate of $\tilde \fO(\epsilon^{-2})$.
In Appendix \ref{apx:GALET}, we show that this also implies the same convergence rate to
the stationary point of $\varphi(x)$ under the assumptions of \citet{kwon2023penalty}.

\citet{huang2023momentum,huang2024optimal} also considers nonconvex-PL bilevel problems.
Our work differs from these works in the following aspects. First, Assumption 2 in \citep{huang2023momentum} $g(x,y)$ has a unique minimizer $y^*(x)$ and $\nabla_{yy}^2 g(x,y^*(x))$ is non-singular, which is much easier than our setting as we have discussed in Remark \ref{rmk:huang}. 
Secondly, our algorithm only requires first-order oracles, while \citep{huang2023momentum} requires second-order oracles. Thirdly, our algorithm has only $\fO(d)$ complexity at each step, where $d = \max\{d_x,d_y \}$ is the dimension of the problem. In contrast, each iteration of the algorithm in \citep{huang2023momentum} requires a $\fO(d^3)$ complexity for computing the SVD decomposition of the Hessian matrix.
Although \citet{huang2024optimal} claimed a projection operator can remove the expensive SVD decomposition, the claim seems to be incorrect. \citet{huang2024optimal} defined projector $\fM(H v )$ such that $ \fM(H v ) = \fS_{[\mu,L_g]} (H) \fP_{r}(v)$, where $\fS_{[\mu,L_g]} (H)$ projects all the singular-values of $H$ into the interval $[\mu,L_g]$ and $\fP_{r}(v)$ projects vector $v$ onto the set $\{v \in \BR^{d_y} : \Vert v \Vert \le r \}$. Consider $H =\begin{pmatrix}
2 & 0 \\
0 & 1 
\end{pmatrix}$, $v = 
\begin{pmatrix}
1 \\
1 
\end{pmatrix}
$, $L= \frac{3}{2}$, $\mu=\frac{1}{2}$, $r_v = \sqrt{2}$ then $Hv =  \begin{pmatrix}
2 \\
1 
\end{pmatrix}$, but $\fM(Hv) = 
\begin{pmatrix}
\frac{3}{2} \\
1 
\end{pmatrix}$ by definition. Hence, $\fM(Hv) \ne \fP_{r'}(Hv)$ for any $r'$. 
In fact, it seems that no deterministic function can implement $\fM(Hv)$. Let $v' = \begin{pmatrix}
2 \\
1 
\end{pmatrix} $, $H' = \begin{pmatrix}
1 & 0 \\
0 & 1
\end{pmatrix}$. Then $\fM(H'v') = \frac{\sqrt{10}}{5} 
\begin{pmatrix}
2 \\
1 
\end{pmatrix}
$ by definition. We have $\fM(H'v') \ne \fM(Hv)$, but $H'v' = Hv$.
Therefore, it seems that the algorithms in \citep{huang2023momentum,huang2024optimal} actually both require very expensive Hessian oracles with $\fO(d^3)$ running time each iteration, which is different from previous works (and our work) that only use Hessian-vector product oracles or gradient oracles with $\fO(d)$ running time per iteration.

Some works use a sequential
approximation minimization strategy~\citep{liu2021towards,liu2021value,liu2020generic} to tackle the discontinuous challenge for bilevel problems without lower-level strong convexity as illustrated by Example \ref{exmp:disconti}. These works generate a series of continuous functions $\{ \varphi_K\}_{K=0}^{\infty}$ which converge to $\varphi$ when $K \rightarrow \infty$.
\citet{liu2020generic} proposed BDA by defining $\varphi_K(x) = f(x,{\rm AggrGD}_{f,g}^K(x))$ for nonconvex-convex problems, where ${\rm AggrGD}_{f,g}^K(x)$ denotes $K$-steps of 
gradient descent on the aggregated function $ 
\alpha f(x,\,\cdot\,)+ (1-\alpha) g(x,\,\cdot\,)$ for some $\alpha \in (0,1)$. \citet{liu2021towards} defined $\varphi_K(x) = \min_z \max_{k \le K} \{ f(x, {\rm GD}^k(x;z))\}$ for nonconvex-nonconvex problems, where ${\rm GD}^k(x;z))$ denotes $k$-steps of gradient descent on $g(x,\,\cdot\,)$ initialized with $z$.
However, all these works require solving a series of sub-problems and often lack a non-asymptotic analysis.

\section{Discussion on \citet{xiao2023generalized}} \label{apx:GALET}

The recently proposed GALET~\citep{xiao2023generalized} has been shown to converge with the rate $\tilde \fO(\epsilon^{-2})$ to the following definition of stationary points of Problem (\ref{grad-refo}).

\begin{dfn} \label{dfn:sta-GALET}
    We say $x^*$ is an $\epsilon$-stationary point of Problem (\ref{grad-refo}) if $\exists y^*, w^*$ such that
    \begin{align*}
        \Vert \fR_x(x^*,y^*,w^*) \Vert \le \epsilon, \quad \Vert \fR_w(x^*,y^*,w^*) \Vert \le \epsilon, \quad \fR_y(x^*,y^*)  \le \epsilon^2,
    \end{align*}
    where we define
    \begin{align*}
        \fR_x(x,y,w) &= \nabla_x f(x,y) + \nabla_{xy}^2 g(x,y) w; \\
        \fR_w(x,y,w) &= \nabla_{yy}^2 g(x,y)\left( \nabla_y f(x,y) + \nabla_{yy}^2 g(x,y) w \right) ; \\
        \fR_y(x,y) &= g(x,y) - \min_{z \in \BR^{d_y}} g(x,z).
    \end{align*}
\end{dfn}

We can show the $\epsilon$-stationary point of $\varphi(x)$ implies the above stationarity definition.

\begin{prop}
Under Assumption \ref{asm:PL}, if
$x^*$ is an $\epsilon$-stationary point of $\varphi(x)$ in \Eqref{hyper-refo}, then it is also an $\fO(\epsilon)$-stationary point as Definition \ref{dfn:sta-GALET}.
\end{prop}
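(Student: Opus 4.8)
The goal is to show that an $\epsilon$-stationary point $x^*$ of $\varphi(x)$ yields witnesses $y^*, w^*$ that make the three residuals $\fR_x$, $\fR_w$, $\fR_y$ small, up to constants depending on $\kappa$ and $\ell$. The natural choice is $y^* \in Y^*(x^*) = \arg\min_y g(x^*, y)$, which immediately makes $\fR_y(x^*, y^*) = 0 \le \epsilon^2$. The remaining task is to choose $w^*$ appropriately and then bound $\fR_x$ and $\fR_w$ using the explicit formula for $\nabla \varphi(x^*)$ provided by Lemma \ref{lem:nabla-phi-form} (referenced in the sketch of Lemma \ref{lem:grad-Lip}), together with the subspace decomposition induced by the PL condition (Lemma \ref{lem:PL-singular}).

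\textbf{Choice of $w^*$ and bounding $\fR_w$.} The term $\nabla_y f(x^*, y^*) + \nabla_{yy}^2 g(x^*, y^*) w$ is exactly the residual of the linear system defining the implicit gradient, so the plan is to set $w^*$ to be a minimum-norm solution of the projected system, i.e. $w^* = -\left(\nabla_{yy}^2 g(x^*, y^*)\right)^\dagger \nabla_y f(x^*, y^*)$ restricted to the range of the Hessian. First I would decompose $\nabla_y f(x^*, y^*)$ into its component in $\mathrm{Range}(\nabla_{yy}^2 g)$ and its component in $\mathrm{Ker}(\nabla_{yy}^2 g)$. On the range component, the PL condition guarantees (by Lemma \ref{lem:PL-singular}) a uniform lower bound $\mu$ on the nonzero singular values of $\nabla_{yy}^2 g(x^*, y^*)$, so $w^*$ is well-defined with $\Vert w^* \Vert \le C_f/\mu = \fO(\kappa)$, and moreover $\nabla_{yy}^2 g(x^*, y^*) w^* = -\Pi_{\mathrm{Range}} \nabla_y f(x^*, y^*)$. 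The key point is that the kernel component of $\nabla_y f(x^*, y^*)$ must vanish: this should follow because $y^*$ is a minimizer of $g(x^*, \cdot)$ and one can perturb within the (locally) flat directions without changing $g^*$, so stationarity of $\varphi$ forces $\nabla_y f$ to have no component along directions in which the lower-level solution set can move. With the kernel component zero, $\nabla_y f(x^*, y^*) + \nabla_{yy}^2 g(x^*, y^*) w^* = 0$, hence $\fR_w(x^*, y^*, w^*) = 0 \le \epsilon$.

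\textbf{Bounding $\fR_x$.} With $w^*$ fixed as above, compare $\fR_x(x^*, y^*, w^*) = \nabla_x f(x^*, y^*) + \nabla_{xy}^2 g(x^*, y^*) w^*$ against the formula for $\nabla \varphi(x^*)$ from Lemma \ref{lem:nabla-phi-form}. In the strongly convex subspace picture, the implicit gradient of $\varphi$ at $x^*$ equals $\nabla_x f(x^*, y^*) + (\nabla_{xy}^2 g(x^*,y^*))^\top (\nabla_{yy}^2 g(x^*,y^*))^\dagger (-\nabla_y f(x^*,y^*))$, which is precisely $\fR_x(x^*, y^*, w^*)$ once the Hessian is symmetric and $w^*$ solves the projected system. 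Therefore $\fR_x(x^*, y^*, w^*) = \nabla \varphi(x^*)$, and $\Vert \fR_x(x^*,y^*,w^*) \Vert = \Vert \nabla \varphi(x^*) \Vert \le \epsilon$. Collecting the three bounds gives the $\fO(\epsilon)$-stationarity claim; the constants absorbed into $\fO(\cdot)$ are at worst logarithmic or polynomial in $\kappa$, which is consistent with the statement.

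\textbf{Main obstacle.} The delicate step is justifying that $\nabla_y f(x^*, y^*)$ has no component in $\mathrm{Ker}(\nabla_{yy}^2 g(x^*, y^*))$ — equivalently, that the formula for $\nabla \varphi$ (which involves only the range-space pseudo-inverse) is valid and consistent with the stationarity condition. This requires carefully invoking the differentiability result of Lemma \ref{lem:Kwon} and the generalized Danskin argument of Lemma \ref{lem:Danskin}: one must argue that moving $y^*$ along a kernel direction stays (to first order) within $Y^*(x^*)$, so that $\varphi(x^*) = \min_{y \in Y^*(x^*)} f(x^*, y)$ being attained at $y^*$ forces the corresponding directional derivative of $f$ to vanish, which is exactly the kernel-component condition. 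Handling the ``to first order'' carefully — the solution set need not be an affine subspace globally — is where I expect the real work to be, though the PL condition and Lemma \ref{lem:set-Lip} provide enough local regularity to push it through.
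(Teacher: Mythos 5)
Your choice of witnesses $y^*\in Y^*(x^*)$ and $w^*=-\bigl(\nabla_{yy}^2 g(x^*,y^*)\bigr)^\dagger\nabla_y f(x^*,y^*)$, and the reduction of $\fR_x$ to $\nabla\varphi(x^*)$ via Lemma~\ref{lem:nabla-phi-form}, coincide with the paper's proof, and the conclusion is correct. However, you make the $\fR_w$ step harder than it needs to be, and the heuristic you offer to repair it is not sound. Observe that with this $w^*$ one has $\nabla_y f+\nabla_{yy}^2 g\, w^*=\bigl(I-\nabla_{yy}^2 g\,(\nabla_{yy}^2 g)^\dagger\bigr)\nabla_y f$, which is the projection of $\nabla_y f$ onto $\mathrm{Ker}(\nabla_{yy}^2 g(x^*,y^*))$. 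Since $\fR_w$ carries an \emph{outer} factor of $\nabla_{yy}^2 g(x^*,y^*)$, which annihilates its own kernel (the Hessian is symmetric), $\fR_w(x^*,y^*,w^*)=0$ \emph{regardless} of whether $\nabla_y f(x^*,y^*)$ has a kernel component. So the ``main obstacle'' you flag does not arise in this direction at all; it is a phantom.

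Separately, the first-order-optimality heuristic you sketch (``$y^*$ attains the $\min$ of $f$ over $Y^*(x^*)$, kernel directions stay in $Y^*(x^*)$, so the directional derivative of $f$ must vanish'') does not justify the claim even if you wanted to use it: the proposition allows $y^*$ to be \emph{any} element of $Y^*(x^*)$, not the one realizing $\varphi(x^*)$, and minimizing over a set only yields one-sided directional inequalities unless you already know the set is locally affine. The statement that $\nabla_y f(x^*,y^*)\in\mathrm{Range}(\nabla_{yy}^2 g(x^*,y^*))$ is indeed true under Assumption~\ref{asm:PL}, but its correct source is Lemma~\ref{lem:space} (a consequence of the PL condition on $h_\sigma$ for all small $\sigma$, not of variational optimality of $f$ over $Y^*$). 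Finally, a minor remark on constants: in this direction the implicit constant in $\fO(\epsilon)$ is exactly $1$ ($\fR_y=\fR_w=0$ and $\|\fR_x\|=\|\nabla\varphi(x^*)\|\le\epsilon$); no factor of $\kappa$ or $\ell$ enters.
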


\begin{proof}
Given $x^*$, take any $y^* \in Y^*(x)$, and let $w^* = -\left( \nabla_{yy}^2 g(x^*,y^*) \right)^{\dagger} \nabla_y  f(x^*,y^*) $. Then we have both $\fR_y(x^*,y^*) = 0$ and $ \fR_{w}(x^*,y^*,w^*) = 0$. By Lemma \ref{lem:nabla-phi-form}, we know that $\fR_x(x^*,y^*,w^*) = \nabla \varphi(x^*)$, therefore we also have $ \Vert \fR_x(x^*,y^*,w^*) \Vert \le \epsilon$.
\end{proof}
% \begin{rmk}
% In the following reduction, we need to solve $w = -\left( \nabla_{yy}^2 h_{\sigma'}(x,y) \right)^{\dagger} \nabla_y  f(x,y)$, which corresponds to the least square solution of
% \begin{align*}
%     \min_{w \in \BR^{d_y}} \fL(w):= \frac{1}{2} \left \Vert \nabla_{yy}^2 h_{\sigma'}(x,y) w + \nabla_y f(x,y) \right \Vert^2.
% \end{align*}
% It can be efficiently solved with a linear convergence rate when the smallest singular value of $\nabla_{yy}^2 h_{\sigma'}(x,y)$ is bounded below away from zero, since $\fL(w)$ is a strongly convex function composed with linear transformation and provably satisfies the PL condition~\citep{karimi2016linear}.
% \end{rmk}
% 

Furthermore, the converse relationship also holds.

% \begin{asm} \label{asm:sig-Hess}
% Suppose there exists some constant $s>0$ such that $\sigma_{\min}^+\left( \nabla_{yy}^2 h_{\sigma'}(x,y) \right) > s  $, where $\sigma_{\min}^+(\,\cdot\,)$ denotes the smallest singular value of a matrix.
% \end{asm}

\begin{prop}
    Under Assumption \ref{asm:PL} , if $\hat x$ is an $\epsilon$-stationary point as Definition \ref{dfn:sta-GALET}, then it is also an $\fO(\epsilon)$-stationary point of $\varphi(x)$.
\end{prop}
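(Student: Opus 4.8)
The plan is to show that $\nabla\varphi(\hat x)$ differs from the residual $\fR_x(\hat x,\hat y,\hat w)$ only by $\fO(\epsilon)$ (up to factors polynomial in $\kappa$), so that the hypothesis $\|\fR_x(\hat x,\hat y,\hat w)\|\le\epsilon$ immediately yields $\|\nabla\varphi(\hat x)\|=\fO(\epsilon)$. I would begin with two reductions. First, the PL condition on $g(\hat x,\cdot)$ implies the quadratic growth inequality $\tfrac{\mu}{2}\,{\rm dist}^2(\hat y,Y^*(\hat x))\le g(\hat x,\hat y)-\min_z g(\hat x,z)=\fR_y(\hat x,\hat y)\le\epsilon^2$, so there is a point $y^*\in Y^*(\hat x)$ with $\|\hat y-y^*\|\le\sqrt{2/\mu}\,\epsilon$; this is the reference point for all comparisons. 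Second, by Lemma~\ref{lem:nabla-phi-form} the hyper-gradient admits the closed form $\nabla\varphi(\hat x)=\nabla_x f(\hat x,y^*)-\nabla_{xy}^2 g(\hat x,y^*)\big(\nabla_{yy}^2 g(\hat x,y^*)\big)^\dagger\nabla_y f(\hat x,y^*)$, i.e. $\nabla\varphi(\hat x)=\fR_x(\hat x,y^*,w^*)$ with $w^*=-\big(\nabla_{yy}^2 g(\hat x,y^*)\big)^\dagger\nabla_y f(\hat x,y^*)$, exactly as in the proof of the forward direction. It therefore suffices to bound $\|\fR_x(\hat x,\hat y,\hat w)-\fR_x(\hat x,y^*,w^*)\|$.

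Next I would set up the perturbation estimates. Put $H=\nabla_{yy}^2 g(\hat x,\hat y)$ and $H^*=\nabla_{yy}^2 g(\hat x,y^*)$; Hessian-Lipschitzness of $g$ gives $\|H-H^*\|\le\rho_g\|\hat y-y^*\|=\fO(\rho_g\epsilon/\sqrt{\mu})$. The key structural input is Lemma~\ref{lem:PL-singular}: the PL condition forces $\nabla_{yy}^2 g(\hat x,\cdot)$ to have locally constant rank with its nonzero eigenvalues uniformly bounded below by $\Omega(\mu)$ on a neighborhood of $Y^*(\hat x)$. For $\epsilon$ small enough $\hat y$ lies in this neighborhood, so $H$ and $H^*$ share a rank and a spectral gap, and consequently the Moore--Penrose inverse and the spectral projectors $\Pi_{{\rm Ker}(\cdot)}$, $\Pi_{{\rm Range}(\cdot)}$ are locally Lipschitz there; this gives $\|H^\dagger-(H^*)^\dagger\|=\fO(\epsilon\,\mathrm{poly}(\kappa))$ and $\|\Pi_{{\rm Ker}H}-\Pi_{{\rm Ker}H^*}\|=\fO(\epsilon\,\mathrm{poly}(\kappa))$. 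Using $\|\fR_w(\hat x,\hat y,\hat w)\|\le\epsilon$ together with the eigenvalue lower bound, the range component of the witness obeys $\Pi_{{\rm Range}H}\hat w=-H^\dagger\nabla_y f(\hat x,\hat y)+\fO(\epsilon\,\mathrm{poly}(\kappa))$. Feeding this estimate, the Hessian-Lipschitz bounds, and $\|\hat y-y^*\|=\fO(\epsilon)$ into the two expressions for $\fR_x$ and cancelling, one arrives at
\begin{align*}
\fR_x(\hat x,\hat y,\hat w)=\nabla\varphi(\hat x)+\nabla_{xy}^2 g(\hat x,\hat y)\,\Pi_{{\rm Ker}H}\hat w+\fO(\epsilon\,\mathrm{poly}(\kappa)).
\end{align*}

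The remaining term $\nabla_{xy}^2 g(\hat x,\hat y)\,\Pi_{{\rm Ker}H}\hat w$ is the main obstacle. Differentiating the manifold identity $\nabla_y g(x,y^*(x))\equiv 0$ shows ${\rm Range}\big(\nabla_{yx}^2 g(\hat x,y^*)\big)\subseteq{\rm Range}(H^*)$, equivalently $\nabla_{xy}^2 g(\hat x,y^*)\,\Pi_{{\rm Ker}H^*}=0$; combining this with the Hessian-Lipschitz bound and the projector perturbation above yields $\|\nabla_{xy}^2 g(\hat x,\hat y)\,\Pi_{{\rm Ker}H}\|=\fO(\epsilon\,\mathrm{poly}(\kappa))$, so the term is $\fO(\epsilon\,\mathrm{poly}(\kappa))$ once the kernel part $\Pi_{{\rm Ker}H}\hat w$ of the witness is controlled --- which is automatic when $\nabla_{yy}^2 g$ is nonsingular (no kernel) and, in general, holds for the bounded witnesses produced by GALET. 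Assembling the pieces, $\|\nabla\varphi(\hat x)\|\le\|\fR_x(\hat x,\hat y,\hat w)\|+\fO(\epsilon\,\mathrm{poly}(\kappa))=\fO(\epsilon)$. I expect the delicate part to be exactly the constant-rank / pseudoinverse-continuity step: one must extract a uniform spectral gap for $\nabla_{yy}^2 g$ near $Y^*(\hat x)$ purely from the PL condition via Lemma~\ref{lem:PL-singular}, since without it the Moore--Penrose inverse is not even continuous and none of the perturbation estimates above survive; everything else reduces to bookkeeping with the Lipschitz constants of Assumption~\ref{asm:PL}.
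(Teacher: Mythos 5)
Your plan diverges from the paper's proof exactly at the step you flag as delicate, and that step does not go through as written. The crux of your argument is the pseudoinverse perturbation bound $\|H^\dagger - (H^*)^\dagger\| = \fO(\epsilon\,\mathrm{poly}(\kappa))$, where $H = \nabla_{yy}^2 g(\hat x,\hat y)$ is the Hessian at the GALET witness $\hat y$ and $H^* = \nabla_{yy}^2 g(\hat x, y^*)$ is the Hessian at its projection $y^*\in Y^*(\hat x)$. You justify this by invoking Lemma~\ref{lem:PL-singular} to extract a ``locally constant rank with a uniform spectral gap on a neighborhood of $Y^*(\hat x)$.'' But Lemma~\ref{lem:PL-singular} is a statement \emph{only at minimizers}: it gives $\lambda_{\min}^+\big(\nabla_{yy}^2 g(\hat x, y^*)\big) \ge \mu$ for $y^*\in Y^*(\hat x)$ and says nothing about the spectrum at nearby non-minimizing points. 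Within a ball of radius $\fO(\epsilon)$ around $y^*$, Weyl's inequality only tells you that eigenvalues which were zero at $H^*$ stay $\fO(\rho_g\epsilon)$; it does not prevent $H$ from acquiring small nonzero eigenvalues, and in that case $H^\dagger$ is discontinuous, $H^\dagger\nabla_y f(\hat x,\hat y)$ and the spectral projectors can blow up, and neither your bound on $\Pi_{{\rm Range}(H)}\hat w$ from $\|\fR_w\|\le\epsilon$ nor the projector estimate $\|\Pi_{{\rm Ker}(H)}-\Pi_{{\rm Ker}(H^*)}\|=\fO(\epsilon)$ has any foundation. You also quietly add a hypothesis (``bounded witnesses produced by GALET'') to control $\Pi_{{\rm Ker}(H)}\hat w$, which is not part of the stated proposition.

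The paper sidesteps pseudoinverse perturbation entirely. After projecting the witness onto $Y^*(x)$, it defines the auxiliary function $\fL(w) = \tfrac{1}{2}\big\| \nabla_{yy}^2 g(x^*,\hat y)\, w + \nabla_y f(x^*,\hat y) \big\|^2$, whose structure uses $\nabla_{yy}^2 g$ evaluated \emph{at the minimizer} $\hat y$ --- precisely where Lemma~\ref{lem:PL-singular} supplies the $\ge\mu$ spectral gap. Because ``strongly convex composed with linear'' is PL with constant $s^2\ge\mu^2$, the PL error bound (Lemma~\ref{lem:PL-EB}) applied to $\fL$ yields ${\rm dist}(w^*, W^*)\le s^{-2}\|\nabla\fL(w^*)\|$, and $\|\nabla\fL(w^*)\|=\fO(\epsilon)$ because $\nabla\fL(w^*)$ differs from $\fR_w(x^*,y^*,w^*)$ by a single Lipschitz comparison of Hessian and gradient data between $\hat y$ and $y^*$ (at cost $\fO(\rho_g\epsilon\cdot\|w^*\|)$, which is where the $\|w^*\|=\fO(1)$ assumption enters). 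Projecting $w^*$ onto the explicit set $W^* = -\big(\nabla_{yy}^2 g(x^*,\hat y)\big)^\dagger\nabla_y f(x^*,\hat y) + {\rm Ker}\big(\nabla_{yy}^2 g(x^*,\hat y)\big)$ to obtain $\hat w$, the range inclusion of Lemma~\ref{lem:space} annihilates the kernel contribution so that $\fR_x(x^*,\hat y,\hat w) = \nabla\varphi(x^*)$ exactly, and one final Lipschitz comparison gives $\|\nabla\varphi(x^*)\|\le\|\fR_x(x^*,y^*,w^*)\|+\fO(\epsilon)$. No continuity of the Moore--Penrose inverse at a non-minimizing point is needed; the only spectral input is the gap at $\hat y\in Y^*(x)$, where it is guaranteed. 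To repair your argument you would need to replace the rank-stability step with this auxiliary-function device, or otherwise establish a uniform spectral gap on a neighborhood of $Y^*(\hat x)$, which the PL condition alone does not give.
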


\begin{proof}
Let $y^*,w^*$ be the corresponding point of $x^*$ in Definition \ref{dfn:sta-GALET}. By Lemma \ref{lem:PL-QG}, 
    \begin{align*}
        {\rm dist} \left( y^*, Y^*(x) \right) \le \sqrt{\frac{\mu}{2} \fR_y(x^*,y^*)} \le \sqrt{\frac{\mu}{2}}\cdot \epsilon.
    \end{align*}
    
Take $\hat y = \arg \min_{y \in Y^*(x)} \Vert y^* - y \Vert$ and
define the following auxiliary function:
\begin{align} \label{eq:square}
    \fL(w):= \frac{1}{2} \left \Vert \nabla_{yy}^2 g(x^*,\hat y) w + \nabla_y f(x^*,\hat y) \right \Vert^2.
\end{align}
Note that 
\begin{align*}
    \nabla \fL(w) = \nabla_{yy}^2 g(x^*,\hat y)\left( \nabla_y f(x^*,\hat y) + \nabla_{yy}^2 g(x^*,\hat y) w \right).
\end{align*}
When $\Vert w^* \Vert =\fO(1)$, we have $ \Vert \nabla \fL(w^*) - \fR_w(x^*,y^*,w^*) \Vert =\fO(\epsilon)$.

$\fL(w)$ has the form of "strongly convex composed with linear". It is $s^2$-PL by \citet{karimi2016linear}, Appendix B, where $s$ is the smaller singular value of $\nabla^2 g(x^*,\hat y)$. By Lemma \ref{lem:PL-singular}, $s \ge \mu$.

Let $W^* = \arg \min_{w \in \BR^{d_y}} \fL(w)$. Then by Lemma \ref{lem:PL-EB},
\begin{align*}
    {\rm dist} \left(w^*, W^* \right) \le \frac{1}{s^2} \Vert \nabla \fL(w^*) \Vert = \fO(\epsilon).
\end{align*}
Note that $W^*$ has the explicit form of 
\begin{align*}
    W^* = -\left(\nabla_{yy}^2 g(x^*,\hat y)  \right)^\dagger  \nabla_y f(x^*,\hat y) + {\rm Ker}(\nabla_{yy}^2 g(x^*,\hat y) ).
\end{align*}
Take $\hat w = \arg \min_{w \in W^*} \Vert w-w^* \Vert$ and by Lemma \ref{lem:space}, 
\begin{align*}
    \nabla \varphi(x^*) = \fR_x(x^*,\hat y,\hat w) = \nabla_x f(x^*,\hat y) + \nabla_{xy}^2 g(x^*,\hat y) \hat w.
\end{align*}
Since $w^*$ is bounded by Lemma \ref{lem:PL-singular}, we know that 
\begin{align*}
    \Vert \nabla \varphi(x^*) \Vert \le \Vert \fR_x(x^*,y^*,w^*) \Vert +  \Vert \fR_x(x^*,\hat y,\hat w) - \fR_x(x^*,y^*,w^*) \Vert = \fO(\epsilon).
\end{align*}

% \begin{align*}
%     \tilde \nabla \varphi(x) = \nabla_x f(x,y^*) + \nabla_{xy}^2 g(x,y^*) w^*.
% \end{align*}

% Using Lemma \ref{lem:space}, we have
% \begin{align*}
%     \tilde \nabla \varphi(x) = \nabla_x f(x,y^*)  - \nabla_{xy}^2 g(x,y^*) \left(\nabla_{yy}^2 g(x,y^*)  \right)^\dagger  \nabla_y f(x,y^*).
% \end{align*}
% This has a very similar form of $\nabla \varphi_{\sigma}(x)$ given by Lemma \ref{lem:nabla-sigma}.  Then by similar arguments in the proof of Lemma \ref{lem:grad-Lip}, we can show that $\Vert \nabla \varphi(x) \Vert = \fO(\epsilon)$ as required.
\end{proof}

\begin{rmk} \label{rmk:comp-GALET}
    In above analysis, the optimal $w^*$ of GALET~\citep{xiao2023generalized} is the solution to the linear equation $ \nabla_{yy}^2 g(x,y^*) w = \nabla_y g(x,y^*)$. However, the Hessian matrix $\nabla_{yy}^2 g(x,y)$ may be indefinite when the lower-level function is nonconvex. To overcome this issue, \citet{xiao2023generalized} uses the square trick like \citet{liu2022quasi}, which solves \Eqref{eq:square} instead. One drawback of the square trick is the conditional number becomes $\fO(\kappa^2)$ and makes the inner loop slower. Since $\varphi(x)$ has $\fO(\ell \kappa^3)$-Lipschitz gradients by Lemma \ref{lem:grad-Lip}. According to our above analysis, the reasonable complexity of GALET should be $\tilde \fO(\ell \kappa^5 \epsilon^{-2})$  though the dependency on $\kappa$ is not explicitly given by \citet{xiao2023generalized}. 
\end{rmk}

% \begin{rmk} \label{rmk:redundent-asm-GALET}
% \citet{xiao2023generalized} additionally assumes the smallest singular value of $\nabla_{yy}^2 g(x,y)$ has a constant gap between zero when solving \Eqref{eq:square}. However, the complexity to solve \Eqref{eq:square} only depends on the singular value of $\nabla_{yy}^2 g(x,y^*)$, which can be bound below
% by our Lemma \ref{lem:PL-singular}. 
% Since $g(x,y)$ is PL in $y$, by Lemma \ref{lem:PL-GD} gradient descent on $g$ can make ${\rm dist}(y^k,Y^*(x))$ converges linearly. So one can let $y^K$ sufficiently close to some point in  $Y^*(x)$ with logarithmic large $K$. Then by Weyl's inequality the singular value of $\nabla^2_{yy} g(x,y^K)$ is also bounded below.
% Therefore, such an additional assumption in \citet{xiao2023generalized} seems to be redundant.
% \end{rmk}

\section{Discussion on \citet{kwon2023penalty}} \label{apx:Prox-F2BA}

Different from our Assumption \ref{asm:PL}a, \citet{kwon2023penalty} uses the following proximal error bound  (Prox-EB) assumption.
\begin{asm} \label{asm:ProxEB}
    Let $ h_{\sigma}(x,y) := \sigma f(x,y) + g(x,y) $ and $Y_{\sigma}^*(x) := \arg \min_{y \in \BR^{d_y}} h_{\sigma}(x,y)$. Suppose that for all $0 \le \sigma \le \overline{\sigma}$, there exists some $\mu'>0$ such that for all $y \in \BR^{d_y}$ we have 
    \begin{align*}
        \rho^{-1} \left \Vert y - y_{\sigma,\rho}^+(x) \right \Vert \ge \mu' {\rm dist} \left(y, Y_{\sigma}^*(x) \right),
    \end{align*}
    where $y_{\sigma,\rho}^+(x)$ is defined via the proximal operator with parameter $\rho$ for function $h_{\sigma}(x,\,\cdot\,)$ as
    \begin{align*}
    y_{\sigma,\rho}^+(x) :=     \arg \min_{z \in \BR^{d_y}} \left\{ 
        h_{\sigma}(x,z) + \frac{1}{2 \rho} \Vert y -z \Vert^2. 
         \right\}.
        % {\rm prox}_{\rho f}(x) := \arg \min_y \left\{ 
        % f(y) + \frac{1}{2 \rho} \Vert y -x \Vert^2. 
        % \right\}
    \end{align*}
\end{asm}

Below, we show that this assumption is equivalent to our Assumption \ref{asm:PL}a.

\begin{prop} \label{prop:Prox-EB-PL}
Let $0 \le \sigma \le \min\{ L_g/L_f, \overline{\sigma} \}$. Suppose both c and d in Assumption \ref{asm:PL} hold. Let $\rho < 1/(2L_g)$.
If Assumption \ref{asm:PL}a holds with constant $\mu$, then Assumption \ref{asm:ProxEB} holds with $\mu' = \mu / (1+2L_g \rho)$. Conversely, if Assumption \ref{asm:ProxEB} holds with constant $\mu'$, then Assumption \ref{asm:PL}a holds with $\mu = \left(\mu' (1- 2L_g \rho) \right)^2 / (2L_g)$. 
\end{prop}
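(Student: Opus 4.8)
The plan is to reduce this to the classical equivalence between the PL inequality and the (proximal) error-bound condition (\citet{karimi2016linear}), with the twist that the error bound is stated at the proximal point rather than at $y$, so triangle-inequality corrections are needed to pass between $\mathrm{dist}(y,Y_\sigma^*(x))$ and $\mathrm{dist}(y_{\sigma,\rho}^+(x),Y_\sigma^*(x))$. First I would record structural facts. Since $f$ and $g$ have $L_f$- and $L_g$-Lipschitz gradients, $h_\sigma=\sigma f+g$ has $L$-Lipschitz gradients with $L=\sigma L_f+L_g\le 2L_g$, the last inequality being exactly where $\sigma\le L_g/L_f$ is used. Hence for $\rho<1/(2L_g)\le 1/L$ the proximal subproblem $\min_z\{h_\sigma(x,z)+\tfrac1{2\rho}\Vert y-z\Vert^2\}$ is strongly convex, so $y^+:=y_{\sigma,\rho}^+(x)$ is unique and its first-order optimality condition reads $y-y^+=\rho\,\nabla_y h_\sigma(x,y^+)$, that is, $\rho^{-1}\Vert y-y^+\Vert=\Vert\nabla_y h_\sigma(x,y^+)\Vert$. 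This identity---a prox step on a smooth function is an implicit gradient step---is the only bridge needed between the Prox-EB quantity and gradient norms; the rest is the PL/error-bound toolkit applied at $y^+$.

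\emph{Assumption~\ref{asm:PL}a $\Rightarrow$ Assumption~\ref{asm:ProxEB}.} The $\mu$-PL property of $h_\sigma$ implies quadratic growth, $h_\sigma(x,y)-\min_z h_\sigma(x,z)\ge\tfrac\mu2\,\mathrm{dist}^2(y,Y_\sigma^*(x))$ (\citet{karimi2016linear}). Applying PL and then this growth bound at $y^+$, together with the identity above, gives $\rho^{-1}\Vert y-y^+\Vert=\Vert\nabla_y h_\sigma(x,y^+)\Vert\ge\sqrt{2\mu\,(h_\sigma(x,y^+)-\min_z h_\sigma(x,z))}\ge\mu\,\mathrm{dist}(y^+,Y_\sigma^*(x))$. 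Since $\mathrm{dist}(y,Y_\sigma^*(x))\le\mathrm{dist}(y^+,Y_\sigma^*(x))+\Vert y-y^+\Vert$, rearranging yields $\rho^{-1}\Vert y-y^+\Vert\ge\tfrac{\mu}{1+\mu\rho}\,\mathrm{dist}(y,Y_\sigma^*(x))$, and because any $\mu$-PL, $L$-smooth function has $\mu\le L\le 2L_g$, this is at least $\tfrac{\mu}{1+2L_g\rho}\,\mathrm{dist}(y,Y_\sigma^*(x))$, i.e.\ Prox-EB with $\mu'=\mu/(1+2L_g\rho)$.

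\emph{Assumption~\ref{asm:ProxEB} $\Rightarrow$ Assumption~\ref{asm:PL}a.} I would lower bound the gradient at $y$ by the gradient at $y^+$: $\Vert\nabla_y h_\sigma(x,y)\Vert\ge\Vert\nabla_y h_\sigma(x,y^+)\Vert-L\Vert y-y^+\Vert=(\rho^{-1}-L)\Vert y-y^+\Vert\ge\rho^{-1}(1-2L_g\rho)\Vert y-y^+\Vert$, which is positive by the choice of $\rho$. Then I would upper bound $h_\sigma(x,y)-\min_z h_\sigma(x,z)$ by a multiple of $\Vert y-y^+\Vert^2$: split it as $(h_\sigma(x,y)-h_\sigma(x,y^+))+(h_\sigma(x,y^+)-\min_z h_\sigma(x,z))$; bound the first summand by expanding $L$-smoothness at $y^+$ and using the identity for $\nabla_y h_\sigma(x,y^+)$; bound the second by expanding $L$-smoothness at a nearest minimizer $\bar y\in Y_\sigma^*(x)$ of $y^+$ (where $\nabla_y h_\sigma(x,\bar y)=0$), giving $\le\tfrac L2\,\mathrm{dist}^2(y^+,Y_\sigma^*(x))$, and then controlling $\mathrm{dist}(y^+,Y_\sigma^*(x))\le(1+(\mu'\rho)^{-1})\Vert y-y^+\Vert$ by a triangle inequality and the Prox-EB bound applied to $y$. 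Dividing the squared-gradient lower bound by the suboptimality upper bound yields $\Vert\nabla_y h_\sigma(x,y)\Vert^2\ge 2\mu\,(h_\sigma(x,y)-\min_z h_\sigma(x,z))$, and tracking constants gives $\mu=(\mu'(1-2L_g\rho))^2/(2L_g)$.

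The routine but most delicate part is the constant bookkeeping in the converse direction: one must check that every expansion uses only $L$-smoothness and never convexity of $h_\sigma$ (which fails in general), that $\rho<1/(2L_g)$ truly forces both uniqueness of $y^+$ and the estimates above, and that the triangle-inequality corrections assemble into exactly the stated $\mu'$ and $\mu$ rather than into constants off by universal factors. I do not expect a conceptual obstacle beyond this; the hypothesis $\sigma\le L_g/L_f$ enters only to keep $L\le 2L_g$ so that $\rho<1/(2L_g)$ implies $\rho L<1$.
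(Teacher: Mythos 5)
Your structural groundwork is right: the identity $\rho^{-1}(y-y_{\sigma,\rho}^+(x))=\nabla_y h_\sigma(x,y_{\sigma,\rho}^+(x))$, the observation that $\sigma\le L_g/L_f$ keeps $h_\sigma$ at most $2L_g$-smooth so that $\rho<1/(2L_g)$ makes the prox well-posed, and the use of the PL error bound are all what the paper does. Your \emph{forward} direction is a correct alternative to the paper's: you apply the error bound at $y^+$ and then a triangle inequality on distances, arriving at $\rho^{-1}\Vert y-y^+\Vert\ge\frac{\mu}{1+\mu\rho}{\rm dist}(y,Y_\sigma^*(x))$ and then weaken $\mu\le 2L_g$; the paper instead compares $\Vert\nabla_y h_\sigma(x,y^+)\Vert$ with $\Vert\nabla_y h_\sigma(x,y)\Vert$ via $2L_g$-smoothness and applies the error bound at $y$. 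Both routes yield $\mu'=\mu/(1+2L_g\rho)$.

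The \emph{converse} direction has a real gap: your decomposition of $h_\sigma(x,y)-\min_z h_\sigma(x,z)$ through $y^+$ does \emph{not} assemble into the stated constant, and you should not expect it to. Following your plan, the first summand contributes roughly $(\rho^{-1}+L/2)\Vert y-y^+\Vert^2$ and the second roughly $\tfrac{L}{2}\bigl(1+(\mu'\rho)^{-1}\bigr)^2\Vert y-y^+\Vert^2$; dividing the squared-gradient lower bound $(\rho^{-1}-L)^2\Vert y-y^+\Vert^2$ by this sum gives a PL constant that is strictly smaller than $(\mu'(1-2L_g\rho))^2/(2L_g)$ for every $\rho\in(0,1/(2L_g))$ (one can check the denominator exceeds $L_g/\rho^2$ while the target ratio would require it to be at most $L_g/\rho^2$). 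The paper avoids the detour through $y^+$ entirely: from the gradient comparison $\Vert\nabla_y h_\sigma(x,y)\Vert\ge(1-2L_g\rho)\rho^{-1}\Vert y-y^+\Vert$ and Prox-EB one gets the error bound at $y$ directly, $\Vert\nabla_y h_\sigma(x,y)\Vert\ge\mu'(1-2L_g\rho)\,{\rm dist}(y,Y_\sigma^*(x))$, and then the single quadratic-growth-from-smoothness estimate $h_\sigma(x,y)-\min_z h_\sigma(x,z)\le L_g\,{\rm dist}^2(y,Y_\sigma^*(x))$ (expand $2L_g$-smoothness at the nearest minimizer of $y$, not of $y^+$) closes the argument and produces exactly $\mu=(\mu'(1-2L_g\rho))^2/(2L_g)$. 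So the fix is simply to apply your final quadratic-growth estimate at $y$ rather than $y^+$ and drop the split altogether.
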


\begin{proof}
Let $\sigma \le L_g /L_f$ then $h_{\sigma}(x,y)$ has $(2L_g)$-Lipschitz gradients.
    Define the Moreau envelope for $h_{\sigma}(x,y)$ with respect to $y$ as $ h_{\sigma,\rho}(x,y) :=  h_{\sigma} \left( x,y_{\sigma,\rho}^+(x)\right)$. Let $\rho < 1/(2L_g)$ then $y_{\sigma,\rho}^+(x)$ is uniquely defined. Danskin's theorem implies that $ \nabla_y h_{\sigma,\rho}(x,y) = \rho^{-1} (y - y_{\sigma,\rho}^+(x) ) $. Therefore the Prox-EB assumption is equivalent to $\Vert \nabla_y h_{\sigma,\rho}(x,y) \Vert \ge \mu' {\rm dist} (y, Y_{\sigma}^*(x) )$.
    Note that
    \begin{align*}
       &\quad  \Vert \nabla_y h_{\sigma,\rho}(x,y) - \nabla_y h_{\sigma}(x,y) \Vert \\
       &= \Vert \nabla_y h_{\sigma}\left( x, y_{\sigma,\rho}^+(x)\right) - \nabla_y h_{\sigma}(x,y) \Vert \\ &\le 2L_g \Vert y - y_{\sigma,\rho}^+(x) \Vert \\
       &=2L_g \rho \Vert \nabla_y h_{\sigma,\rho}(x,y) \Vert.
    \end{align*}
When $\rho < 1/(2L_g)$ the triangle inequality implies
\begin{align*}
    (1 - 2 L_g \rho)  \Vert \nabla_y h_{\sigma,\rho}(x,y) \Vert \le \Vert \nabla_y h_{\sigma}(x,y) \Vert \le  (1 + 2 L_g \rho)  \Vert \nabla_y h_{\sigma,\rho}(x,y) \Vert .
\end{align*}
Therefore, if $\mu$-PL condition holds,  by Lemma \ref{lem:PL-EB}, we have
\begin{align*}
    \Vert \nabla_y h_{\sigma,\rho} (x,y) \Vert \ge \frac{1}{1+ 2L_g \rho} \Vert \nabla_y h_{\sigma} (x,y) \Vert \ge \frac{\mu}{1+ 2L_g \rho} {\rm dist} \left(y,Y_{\sigma}^*(x) \right).
\end{align*}
Conversely, if $\mu'$-Prox-EB condition holds,  then we have
\begin{align*}
    \Vert \nabla_y h_{\sigma}(x,y) \Vert \ge (1- 2L_g \rho) \Vert \nabla_y h_{\sigma,\rho}(x,y) \Vert \ge \mu'(1-2L_g\rho) {\rm dist} \left(y,Y_{\sigma}^*(x) \right).
\end{align*}
Since $h_{\sigma}(x,y)$ has $(2L_g)$-Lipschitz gradients, then 
\begin{align*}
    h_{\sigma}(x,y) - h^*_{\sigma}(x) \le L_g {\rm dist}^2 (y, Y_{\sigma}^*(x) ) \le \frac{L_g}{ \left( \mu' \right)^2 (1-2L_g \rho)^2} \Vert \nabla_y h_{\sigma}(x,y) \Vert^2, 
\end{align*}
which is exactly the PL inequality.
% This gives the desired result.
% Therefore if $\mu$-PL condition holds, then $\mu'$-Prox-EB condition holds with $\mu' = \mu / (1+2L_g \rho)$. 
% Similarly, if $\mu'$-Prox-EB condition holds, then $\mu$-PL condition holds with $\mu = \mu' (1- 2L_g \rho) $. 

% the proximal sub-problem $h_{\sigma}(x,z) + \Vert y - z \Vert^2 / (2\rho)$ is strongly convex in $z$ and therefore $y_{\sigma,\rho}^+(x)$ is uniquely defined.
% Furthermore, the gradient and Hessian of the Moreau envelope with respect to $y$ is explicitly given by
% \begin{align*}
%     \nabla_y h_{\sigma, \rho}(x,y) &= \frac{1}{\rho} \left(y - y_{\sigma,\rho}^+(x) \right), \\
\end{proof}

\section{Proof of Theorem \ref{thm:cont-cont}}

\thmcontcont*
\begin{proof}
For given $x_1,x_2$, define 
\begin{align*}
    d_1 := \max_{y_2 \in Y^*(x_2)} {\rm dist}(Y^*(x_1),y_2), \quad d_2:= \max_{y_1 \in Y^*(x_1)} {\rm dist}( y_1, Y^*(x_2)).
\end{align*}
Note that we can replace sup with max in Definition \ref{def:hausdorff} due to the compactness of $Y^*(x)$.  
Therefore,  $ {\rm dist}(Y^*(x_1), Y^*(x_2)) = \max \{d_1,d_2 \}$. Below we prove each part of the theorem.
% \begin{align} \label{eq:ab-suff}
%      = \max \left \{ {\rm (I)} \triangleq _,\quad \underbrace{\max_{y_1 \in Y^*(x_1)} \min_{y_2 \in Y^*(x_2) } \Vert y_1-y_2 \Vert }_{\rm (II)}     \right\}.  
% \end{align}

(a). See Theorem 3B.5 \citep{dontchev2009implicit}.

(b).
It suffices to show for any given $x_1 \in \BR^{d_x}$ and any $\epsilon>0$, there exists $\delta>0$ such that for any $x_2$ satisfying $\Vert x_1-x_2 \Vert \le \delta$ both $d_1$ and $d_2$  are no larger than $\epsilon$. We prove this by assigning different $f(x,y)$ and then applying the continuity of $\varphi(x) := \min_{y \in Y^*(x)} f(x,y)$.
% % \begin{align} \label{eq:ab-suff}
% %     \max \left \{ \underbrace{\max_{y_2 \in Y^*(x_2)} \min_{y_1 \in Y^*(x_1)} \Vert y_1- y_2 \Vert}_{(a)},\quad \underbrace{\max_{y_1 \in Y^*(x_1)} \min_{y_2 \in Y^*(x_2) } \Vert y_1-y_2 \Vert }_{(b)}     \right\} \le \varepsilon.  
% % \end{align}

Firstly, take $f(x,y)=- {\rm dist}(y,Y^*(x_1))$. Simple calculus shows $\varphi(x_1) =0$ and $\varphi(x_2) = -d_1$.
By the continuity of $\varphi(x)$ at $x_1$, we know that for given $\epsilon>0$,  there exists $\delta_1>0$, such that for any $x_2$ satisfying $\Vert x_1-x_2\Vert \le \delta_1$, we have $d_1 = \varphi(x_1) - \varphi(x_2)  \le \epsilon$.

Secondly, we want to prove that for any $\epsilon>0$ there exists $\delta_2>0$ such that for any $x_2$ satisfying $\Vert x_1 - x_2 \Vert \le \delta_2$ we have $d_2 \le \epsilon$. We prove this by contradiction. Suppose not, then we can find a sequence $\{ x_n\}$ such that $ x_n \rightarrow x_1$, but $ \max_{y_1 \in Y^*(x_1)} {\rm dist}(y_1,Y^*(x_n)) \ge \epsilon$ for some $\epsilon>0$. We take the corresponding $y_n = \arg \max_{y_1 \in Y^*(x_1)} {\rm dist}(y_1,Y^*(x_n))$. Since $\{y_n \}$ is a bounded sequence, there exists a convergent subsequence $\{ y_{n_k} \}$ with some limit point $y_1' \in Y^*(x_1)$.
Take $n^*$ sufficiently large such that for any $n \ge n^*$ we have $\Vert y_{n_k} - y_1' \Vert \le \epsilon/2$. Then by triangle inequality, for any $n \ge n^*$ we have ${\rm dist}(y_1',Y^*(x_{n_k})) \ge \epsilon / 2$. Now, take $ f(x,y) =  \Vert y - y_1' \Vert $. Simple calculus shows that $\varphi(x_1) = 0$ and $\varphi(x_{n_k}) = {\rm dist}(y_1', Y^*(x_{n_k}))$. However, $ \vert \varphi(x_{n_k}) - \varphi(x_1) \vert \ge \epsilon/2$ for the sequence $\{x_{n_k}:n_k \ge n^* \}$ satisfying $x_{n_k} \rightarrow x_1$. This contradicts the continuity of $\varphi(x)$ at $x_1$.

Finally, we take $\delta = \min\{\delta_1,\delta_2 \}$ and conclude that once $\Vert x_1 - x_2 \Vert \le \delta$ we have both $d_1$ and $d_2$ are smaller than $\epsilon$, implying the continuity of $Y^*(x)$.

(c). It suffices to show for any given $x \in \BR^{d_x}$, there exists $\delta>0$ and $L>0$ such that $\varphi(x)$ is $L$-Lipschitz on $\sB_{\delta}(x_1)$. 
Firstly, the local Lipschitz continuity of $Y^*(\,\cdot\,)$ implies the existence of $\delta >0$ and $L_1>0$ such that $Y^*(\,\cdot\,)$ is $L_1$-Lipschitz on $\sB_{\delta}(x_1)$. Next, for any $x_2 \in \sB_{\delta}(x_1)$, we pick 
\begin{align} \label{eq:y1-y2}
    y_1\in\argmin_{y\in Y^*(x_1)} f(x_1,y), \quad y_2\in\argmin_{y\in Y^*(x_2)} f(x_2,y).
\end{align}
There exist $y_1' \in Y^*(x_1)$ and $y_2' \in Y^*(x_2)$ such that 
\begin{align*}
    \Vert y_1' - y_2 \Vert \le L_1 \Vert x_1 -x_2 \Vert,\quad \Vert y_1 - y_2' \Vert \le L_1 \Vert x_1 - x_2 \Vert.
\end{align*}
Therefore, both $y_2,y_2'$ lie in the compact set $\fN_y(x_1) = \{y: {\rm dist}(y,Y^*(x_1)) \le L_1 \delta \}$.   

The local Lipschitz property of $f(x,y)$ implies that there exists $L_2>0$ such that $ f(x,y)$ is $L_2$-Lipschitz on the set $\sB_{\delta}(x_1) \times \fN_y(x_1)$. Then
{\small \begin{align*}
    &\varphi(x_1)-\varphi(x_2)\le f(x_1,y_1')-f(x_2,y_2)\le L_2\left(\|x_1-x_2\|+\|y_2-y_1^\prime\|\right)\le (L_1+1) L_2 \Vert x_1  -x_2 \Vert.\\
    &\varphi(x_2)-\varphi(x_1)\le f(x_2,y_2')-f(x_1,y_1)\le L_2\left(\|x_1-x_2\|+\|y_1-y_2^\prime\|\right)\le (L_1+1) L_2 \Vert x_1  -x_2 \Vert.
\end{align*}}
This implies that $\varphi(x)$ is Lipschitz on $\sB_{\delta}(x_1)$.

(d).
For any compact set $K \subseteq \BR^{d_x}$, there exists $L>0$ such that $\varphi(x)$ is $L$-Lipschitz on $K$. Then
for any $x_1,x_2 \in K$, taking $f(x,y) = - {\rm dist}(y,Y^*(x_1))$ yields
\begin{align*}
    d_1 =\varphi(x_1) -  \varphi(x_2) \le L \Vert x_1 - x_2 \Vert, 
\end{align*}
By symmetric, we can also show that $d_2 \le L \Vert x_1 - x_2 \Vert$. Combining them, we show that $Y^*(x)$ is Lipschitz on any compact set $K$. This finishes the proof.

(e). Pick $y_1,y_2$ as \Eqref{eq:y1-y2}. Similarly, there exist $y_1^\prime\in Y^*(x_1)$ and $y_2^\prime\in Y^*(x_2)$ such that
\begin{align*}
    &\varphi(x_1)-\varphi(x_2)\le f(x_1,y_1^\prime)-f(x_2,y_2)\le C_f\left(\|x_1-x_2\|+\|y_2-y_1^\prime\|\right)\le (\kappa+1) C_f \Vert x_1  -x_2 \Vert,\\
    &\varphi(x_2)-\varphi(x_1)\le f(x_2,y_2^\prime)-f(x_1,y_1)\le C_f\left(\|x_1-x_2\|+\|y_1-y_2^\prime\|\right)\le (\kappa+1) C_f \Vert x_1  -x_2 \Vert,
\end{align*}
This establishes the Lipschitz continuity of $\varphi(x)$.

(f). Without loss of generality, we assume $C_f = 1$,  otherwise we can scale $f(x,y)$ by $C_f$ to prove the result. Because $f(x,y)$ is globally Lipschitz, we can take $K = \BR^{d_x}$ in the proof of \textbf{d}. Then by the same arguments, we can show that $Y^*(x)$ is $C_{\varphi}$-Lipschitz.

%For any $x_1,x_2$, taking We $f(x,y) = - \min_{y_1 \in Y^*(x_1)} \Vert y-y_1 \Vert$ yields
% \begin{align*}
%    d_1 =\varphi(x_1) -  \varphi(x_2) \le C_{\varphi} \Vert x_1 - x_2 \Vert.
% \end{align*}
% Next, we let $f(x,y) = \max_{y_1 \in Y^*(x_1)} \Vert y - y_1 \Vert$, then 
% \begin{align*}
%     {\rm (ii)} \le \varphi(x_2) - \varphi(x_1) \le C_{\varphi} \Vert x_1 - x_2 \Vert.
% \end{align*}
% Together, recalling the definition of (i) and (ii), we know that 
% \begin{align*}
%     {\rm dist}(Y^*(x_1), Y^*(x_2) ) \le C_{\varphi} \Vert x_1 - x_2 \Vert, \quad \forall x_1,x_2 \in \BR^{d_x}.
% \end{align*}

\end{proof}

\section{Proof of Theorem \ref{thm:NO}} \label{apx:NO}

% {\color{red} Not exactly the same function.} 

Our hard instance is based on the following convex zero-chain. The following function is very similar to the worst-case function given by
\citep{nesterov2018lectures}, Section 2.1.2. The only difference is that the function by \citet{nesterov2018lectures} has an additional term $z_{[q]}^2 / 8$.

% \begin{dfn}[Zero-Chain] \label{dfn:zc}
% We call function $h(z): \BR^q \rightarrow \BR$ a first-order zero-chain if for any sequence  $\{z_k\}_{k\ge 1}$ satisfying 
%     \begin{align}\label{eq: zero chain}
%     z_i \in {\rm Span}\{\nabla h(z_0), \cdots , \nabla h(z_{i-1})\}, \quad i \ge 1; \quad z_0 = 0
% \end{align}
% it holds that $z_{i,[j]}=0$, $i+1\le j\le q$.
% \end{dfn}
% Below, we introduce the convex zero-chain from \citet{nesterov2018lectures}, 
% We remark that in the construction of a zero chain, we can always assume $z_0=0$ without loss of generality. Otherwise, we can translate the function to $h( z - z_0 )$. 

% Since the subgradients may contain more than one element, we also say $h(z)$is zero-chain whenever there exists some adversarial subgradient oracle. This would also provide a valid lower bound \citep{nesterov2018lectures}.

\begin{dfn}[Worse-Case Zero-Chain] \label{worse:smooth}
Consider  the family of functions:
\begin{align*}
    h_q(z) = \frac{1}{8} (z_{[1]} -1)^2 +  \frac{1}{8} \sum_{j=1}^{q-1} \left(z_{[j+1]} - z_{[j]} \right)^2. 
\end{align*}
The following properties hold for any $h_q(z)$ with $q \in \mathbb{N}^+$:
\begin{enumerate}[label=\alph*.]
    \item It has a unique minimizer $z^* = \vone $. 
    \item It is convex.
    \item It has $1$-Lipschitz gradients.
    \item It is a first-order zero-chain, \textit{i.e.} for any $z \in \BR^q$, 
    \begin{align*}
        {\rm supp}\{z \} \in \{1,2,\cdots,j \} \Rightarrow {\rm supp}\{\nabla h(z) \} \in \{1,2,\cdots,j+1 \}.
    \end{align*}
\end{enumerate}
\end{dfn}

\begin{proof}
We prove each property one by one.
It can easily be seen that $h_q(z) \ge 0$ for all $z \in \BR^q$ and the equality holds if and only if $z = \vone$.  This proves property \textbf{a}. 
Further, note that $h_q(z)$ is quadratic with Hessian given by
\begin{align*}
    A = \frac{1}{4}
    \begin{bmatrix}
    2 & -1 &  & & \\
    -1 & 2 & -1 & \\
     & -1 & 2 & -1 & \\
     & & \ddots & \ddots & \ddots \\
     & & &  -1 & 1
    \end{bmatrix}.
\end{align*}
As $A$ is diagonally dominant, we  know that $A \succeq O$. This proves property \textbf{b}. 
For any $v \in \BR^q$, 
\begin{align*}
    v^\top A v &= \frac{1}{4} \left[ v_{[1]}^2 + \sum_{j=1}^{q-1} (v_{[j]} - v_{[j+1]})^2 \right]  \\
    &\le \frac{1}{4} \left[ v_{[1]}^2 + \sum_{j=1}^{q-1} (v_{[j]} - v_{[j+1]})^2 + v_{[q]}^2 \right] \\
    &\le \frac{1}{4} \left[ v_{[1]}^2 + \sum_{j=1}^{q-1} 2(v_{[j]}^2 + v_{[j+1]}^2) + v_{[q]}^2 \right] \\
    &\le  \sum_{j=1}^q v_{[j]}^2 = \Vert v \Vert^2.
\end{align*}
This proves property \textbf{c}. Finally property \textbf{d} holds since $A$ is tridiagonal.
\end{proof}

In bilevel problems, it is crucial to find a point $y$ that is close to $Y^*(x)$, instead of just achieving a small optimality gap $g(x,y)- g^*(x)$. However, it is difficult for any first-order algorithms to
``locate'' the minimizers of the function class in Definition \ref{worse:smooth}.
Below, we formalize this observation into a rigorous statement.

\ThmNO*

% \proponevarintract*

% \begin{proof}
%     Without loss of generality, we assume $x_0= y_0 = 0$.
% Otherwise, we can translate the functions, and the result still holds.
% Let $d_x =1$, $d_y = q =2 T K$, $\beta = 1 / \sqrt{q}$ and 
% \begin{align*}
%     f(x,y) = 2(x+1)^2 r(y),  \quad g(y) = \beta^2 h_q(y /\beta ),
% \end{align*}
% where $r(y) = \sum_{j = q/2+1}^q  \psi(y_{[j]})$ and $\psi(z) = z^2 / 2$.

% Below, prove by induction that $x_t = x_0$, and $y_{t,[j]}^k = 0$ for all $j > tK +k$. 

% Suppose $x_t = x_0$, then $\nabla_y f(x_t,y) =  2 \nabla r(y)$. If we have $y_{t,[j]}^k = 0$ for all $j > tK+k$, then we have $ \nabla r(y)_{[j]} = \psi'(y_{[j]}) = 0$ for all these coordinates $j$. By the property of zero-chain $h_q(y)$, we also have $ \nabla h(y)_{[j]} =0  $ for all $j > tK +k +1$. This indicates that each inner loop iteration step $k$ increases $y_t^k$ by at most one non-zero coordinate. Since there are at most $q/2$ iterations for $y$ in total, the last $q/2$ coordinates of $y_t^k$ will always remain zero. Since $r(y)$ only depends on the last $q/2$ coordinates, we have $\nabla_x f(x_t,y_t^K) =  4 \sum_{j=q/2}^q \psi(y_{t,[j]}^K) = 0$. Then by the update rule on $x_t$, we have $x_{t+1} = x_t$ remains unchanged. 
% \end{proof}

% prior proof from Hermitte  
\begin{proof}
% Without loss of generality, we assume $x_0= y_0 = 0$.
% Otherwise, we can translate the functions, and the result still holds.
Let $d_x =1$, $d_y = q =2 T K$, $\beta = 1 / \sqrt{q}$ and 
\begin{align*}
    f(x,y) = 2(x+1)^2 r(y), \quad g(y) = \beta^2 h_q(y /\beta ).
\end{align*}
% where $h_q(y)$ follows Definition \ref{worse:smooth} and $r(y) = \sum_{j=q/2+1}^{q} \psi(y_{[j]}) $, where $ \psi(z)= z^2/2$. 
% \begin{align*}
%     f(x,y) = 2 (x+1)^2 r(y) , \quad g(x,y) = h(y) =  \beta^2 h_q(y /\beta ), 
% \end{align*}
where $h_q(y)$ follows Definition \ref{worse:smooth} and $r(y) = \sum_{j=q/2+1}^{q} \psi(y_{[j]}) $, where $ \psi: \BR \rightarrow \BR $ is 
\begin{align} \label{eq:hermitte-phi}
\begin{split}
    \psi(y) = {\begin{cases}
        \beta^2, & y> 2\beta; \\[1mm]      
        p(y), & \beta < y \le 2\beta; \\[1mm]
        y^2/2, & -\beta \le y \le \beta; \\[1mm]
        p(-y), & -2 \beta \le y < -\beta; \\[1mm]
        \beta^2, & y < -2 \beta; \\[1mm]
    \end{cases}}
\end{split}
\end{align}
where 
\begin{align*}
    p(y) = - \frac{y^5}{2 \beta^3} + \frac{9 y^4}{2 \beta^2} - \frac{31 y^3}{2\beta} + 25 y^2 - 18 \beta y + 5 \beta^2
\end{align*}
is the Hermite interpolating polynomial that satisfies
\begin{align*}
    p(\beta)= \beta^2/2, ~ p'(\beta) = \beta, ~ p''(\beta) = 1 ~~{\rm and} ~~ p(2 \beta) = \beta^2, ~ p'(2 \beta)=0, ~ p''(2\beta) = 0.
\end{align*}
There must exist numerical constants $\gamma_0,\gamma_1,\gamma_2,\gamma_3$ such that
\begin{align*}
    0 < \psi(y) \le \gamma_0 \beta^2, \quad \vert \psi'(y) \vert \le \gamma_1 \beta, \quad \vert \psi''(y) \vert \le \gamma_2,\quad \vert \psi'''(y) \vert \le \gamma_3 / \beta.
\end{align*}
% \begin{align*}
%     \psi(y) = {\begin{cases}
%         -\dfrac{\beta^3}{y} + \dfrac{3 \beta^2}{2}, & y\ge 2\beta; \\[1mm]      
%         \dfrac{1}{2} y^2, & -\beta < y < \beta; \\[3mm]
%         \dfrac{\beta^3}{y} + \dfrac{3 \beta^2}{2}, & y \le -\beta;\\[1mm]
%     \end{cases}}
% \end{align*}
% The gradient and Hessian of $r(y)$ are given by
% \begin{align*}
%     \nabla r(y) = 
%     \begin{pmatrix}
%         \psi'(y_{[1]}) \\
%         \psi'(y_{[2]}) \\
%         \vdots \\
%         \psi'(y_{[q]})
%     \end{pmatrix}
%     ,
%     \nabla^2 r(y) = 
%     \begin{pmatrix}
%         \psi''(y_{[1]})   \\
%          & \psi''(y_{[2]}) \\
%          & &  \ddots \\
%          & & & \psi''(y_{[q]}) \\
%     \end{pmatrix}
% \end{align*}
We can verify that $r(y)$ is both bounded and Lipschitz  because
\begin{align*}
   r(y) = \sum_{j=q/2+1}^q \psi(y_{[j]}) \le  \frac{\gamma_0}{2},
\end{align*}
and
\begin{align*}
   \Vert \nabla r(y) \Vert = \sqrt{ \sum_{j=q/2+1}^q \left( \psi'(y_{[j]} \right)^2} \le \frac{\gamma_1}{\sqrt{2}}.
\end{align*}
Furthermore, we can also prove that $r(y)$ has Lipschitz gradients. 
For any $y,y' \in \BR^{d_y}$, 
{\small \begin{align*}
 \Vert \nabla r(y) - \nabla r(y') \Vert = \sqrt{\sum_{j=q/2+1}^q \left( \psi'(y_{[j]}) - \psi'(y'_{[j]}) \right)^2} \le \gamma_2 \sqrt{\sum_{j=q/2+1}^q \left( y_{[j]} - y'_{[j]} \right)^2} \le  \gamma_2 \Vert y - y' \Vert.
\end{align*}}
% Similarly, we can also prove that $r(y)$ also has Lipschitz Hessian. For any $y,y' \in \BR^{d_y}$, 
% \begin{align*}
%     \Vert \nabla^2 r(y) - \nabla^2 r(y') \Vert \le \Vert \nabla^2 r(y) - \nabla^2 r(y') \Vert_F  \le 
% \end{align*}
Note that $ \nabla_x f(x,y) = 4(x+1) r(y)$, $\nabla_y f(x,y) =   2(x+1)^2 \nabla r(y)$.
For any $x \in \fX = [-2,0]$,
\begin{align*}
    &\quad \vert \nabla_x f(x,y) - \nabla_x f(x',y') \vert \\
    &\le  4 \vert x - x' \vert \cdot r(y) + 4 \vert x'  +1 \vert \cdot \vert r(y) - r(y') \vert \\
    &\le 2 \gamma_0 \vert x- x' \vert + 2 \sqrt{2}~ \gamma_1 \Vert y - y' \Vert.
\end{align*}
as well as
\begin{align*}
    &\quad \Vert \nabla_y f(x,y) - \nabla_y f(x',y') \Vert \\
    &\le 2\left( (x+1)^2 - (x'+1)^2 \right) \cdot \Vert \nabla r(y) \Vert + 2 (x'+1)^2 \cdot \Vert \nabla r(y) - \nabla r(y') \Vert \\
    &\le 2 \sqrt{2}~\gamma_1 \vert x -x' \vert + 2 \gamma_2 \Vert y - y' \Vert.
\end{align*}
These two inequalities imply $f(x,y)$ has Lipschitz gradients on $\fX \times \BR^{d_y}$.

% $\psi(y) = \frac{1}{2}\sum_{j={K+1}}^q  y_{[j]}^2$.

Below, prove by induction that $x_t = 0$, and $y_{t,[j]}^k = 0$ for all $j > tK +k$. 

Suppose $x_t = 0$, then $\nabla_y f(x_t,y) =  2 \nabla r(y)$. If we have $y_{t,[j]}^k = 0$ for all $j > tK+k$, then we have $ \nabla r(y)_{[j]} = \psi'(y_{[j]}) = 0$ for all these coordinates $j$. By the property of zero-chain $g(y)$, we also have $ \nabla g(y)_{[j]} =0  $ for all $j > tK +k +1$. This indicates that each inner loop iteration step $k$ increases $y_t^k$ by at most one non-zero coordinate. Since there are at most $q/2$ iterations for $y$ in total, the last $q/2$ coordinates of $y_t^k$ will always remain zero. Since $r(y)$ only depends on the last $q/2$ coordinates, we have $\nabla_x f(x_t,y_t^K) =  4 \sum_{j=q/2}^q \psi(y_{t,[j]}^K) = 0$. Then by the update rule on $x_t$, we have $x_{t+1} = x_t$ remains unchanged. 

The optimal solution of the lower-level function is unique and given by $y^* = \beta \vone$. 
By $r(y^*) = 1/4 $, we know that
$\varphi(x) = (x+1)^2 / 2$. 

% , 

% We now prove the result by induction on $t$. Suppose $x_t= x_0$. 

% Note that in the construction, $g(x,y)$ is independent of $x$, therefore by the property of $\fA_{\rm IGE}$ we have $\hat \nabla \varphi(x_t) = \nabla_x f(x_t,y_{t}^K)$. Since $x=1$, we also know that $f(x,y) = \nabla_x f(x,y) = \psi(y)$. Therefore, it suffices to show that the failure of value convergence is impossible, and then the failure of gradient convergence follows. 

%  Since $x=1$, we know that
% \begin{align*}
%      \varphi(x) = \psi(y^*) =  \frac{1}{2} \sum_{j=K+1}^q \sigma^2 = \frac{K \sigma^2}{2} = \frac{1}{4}.
% \end{align*}

% It is clear from the construction that both $f(x,\cdot\,), g(x,\,\cdot\,)$ are convex and $1$-gradient Lipschitz. Moreover, both of them are zero-chains.
% Then the property of zero-chain leads to
% \begin{align*}
%     y_{k,[j]}=0,  \quad \forall k+1 \le j \le q, \quad 0 \le k \le K.
% \end{align*}
% Therefore $\psi(y)$ remains zero for all $y \in {\rm Span}\{y_0,\cdots,y_{K-1}\}$, which always has a constant gap with $\psi(y^*)$.

\end{proof}

\section{Proof of Theorem \ref{sec:YES}}

First of all, we recall some useful lemmas for PL conditions~\citep{karimi2016linear}.

\begin{lem} \label{lem:PL-EB}
For $\mu$-PL function $h(x): \BR^d \rightarrow \BR$ with Lipschitz gradients, for any $x \in \BR^d$,
\begin{align*}
    \Vert \nabla h(x) \Vert \ge \mu {\rm dist}(x,X^*), ~~{\rm where}~~ X^* = \arg \min_{x \in \BR^d} h(x).
\end{align*}
\end{lem}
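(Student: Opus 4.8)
I would prove the error bound in two steps. First establish the \emph{quadratic growth} inequality $h(x) - h^* \ge \tfrac{\mu}{2}\,{\rm dist}(x,X^*)^2$, where $h^* := \min_{z \in \BR^d} h(z)$ (which exists since $\mu$-PL functions have a non-empty solution set). Then combine this with the PL inequality to recover $\Vert \nabla h(x)\Vert \ge \mu\,{\rm dist}(x,X^*)$. The main tool for the first step is the continuous-time gradient flow, which converts the differential decrease of the objective into a bound on the length of a trajectory from $x$ into $X^*$; crucially, this route produces the sharp constant $\mu$ with no dependence on the gradient-Lipschitz constant.

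\textbf{Step 1 (quadratic growth).} Fix $x$; if $x\in X^*$ the claim is trivial, so assume $h(x)>h^*$. Consider the gradient flow $\dot x_t = -\nabla h(x_t)$ with $x_0 = x$; since $h$ has Lipschitz gradients this ODE has a unique solution for all $t\ge 0$, and one checks that it stays away from $X^*$ at every finite time. Let $F_t := h(x_t)-h^* > 0$. Then $\dot F_t = -\Vert \nabla h(x_t)\Vert^2 \le -2\mu F_t$ by the PL inequality, so $F_t \le F_0 e^{-2\mu t}\to 0$. Differentiating $\sqrt{F_t}$ and using $\sqrt{F_t}\le \Vert\nabla h(x_t)\Vert/\sqrt{2\mu}$ (PL again) gives
\[
\frac{d}{dt}\sqrt{F_t} \;=\; \frac{-\Vert\nabla h(x_t)\Vert^2}{2\sqrt{F_t}} \;\le\; -\sqrt{\tfrac{\mu}{2}}\,\Vert\nabla h(x_t)\Vert .
\]
Integrating over $[0,\infty)$ and using $F_t\to 0$ yields $\int_0^\infty \Vert\nabla h(x_t)\Vert\,dt \le \sqrt{2/\mu}\,\sqrt{h(x)-h^*}$. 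Since $\int_0^\infty\Vert\dot x_t\Vert\,dt = \int_0^\infty\Vert\nabla h(x_t)\Vert\,dt$ is finite, the trajectory is Cauchy and converges to some $x_\infty$ with $h(x_\infty)=h^*$, hence $x_\infty\in X^*$. Therefore ${\rm dist}(x,X^*)\le \Vert x-x_\infty\Vert \le \int_0^\infty\Vert\dot x_t\Vert\,dt \le \sqrt{2/\mu}\,\sqrt{h(x)-h^*}$, which squares to $h(x)-h^* \ge \tfrac{\mu}{2}\,{\rm dist}(x,X^*)^2$.

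\textbf{Step 2 (error bound).} Plug quadratic growth into the PL inequality:
\[
\Vert\nabla h(x)\Vert^2 \;\ge\; 2\mu\,(h(x)-h^*) \;\ge\; 2\mu\cdot\tfrac{\mu}{2}\,{\rm dist}(x,X^*)^2 \;=\; \mu^2\,{\rm dist}(x,X^*)^2 ,
\]
and taking square roots finishes the proof.

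\textbf{Main obstacle.} The conceptual content is entirely in Step 1; the only delicate points are analytic: global existence of the gradient flow (immediate from $L$-smoothness and monotone decrease of $h$), the fact that the flow cannot reach $X^*$ in finite time when $h(x)>h^*$ (a short Gr\"onwall argument near a critical point, which also lets us divide by $\sqrt{F_t}$), and—the real crux—upgrading finiteness of the path length into convergence of $x_t$ to a point of $X^*$. A purely discrete alternative runs gradient descent with step $1/L$ and sums the step lengths, but this introduces an extra $L/\mu$ factor and hence does \emph{not} give the sharp constant $\mu$; the continuous-time argument is what yields the stated inequality. This lemma is due to \citet{karimi2016linear}, so one may alternatively just cite it.
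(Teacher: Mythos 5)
Your argument is correct, and it is worth noting that the paper itself does not prove this lemma at all: both this statement and the companion quadratic-growth bound (Lemma \ref{lem:PL-QG}) are simply cited as Theorem 2 of \citet{karimi2016linear}. Your gradient-flow proof is the standard argument behind that citation, and you execute it accurately: the Gr\"onwall bound $F_t \ge F_0 e^{-2Lt}$ (from $\Vert\nabla h\Vert^2 \le 2L(h-h^*)$ for $L$-smooth functions) justifies dividing by $\sqrt{F_t}$, the finite path length $\int_0^\infty \Vert \dot x_t\Vert\,dt \le \sqrt{2/\mu}\,\sqrt{h(x)-h^*}$ makes the trajectory Cauchy, and its limit lies in $X^*$ because $F_t \to 0$; squaring gives quadratic growth with constant $\mu/2$, and chaining it with the PL inequality yields $\Vert\nabla h(x)\Vert \ge \mu\,{\rm dist}(x,X^*)$ with exactly the stated constant, with no spurious $L/\mu$ factor. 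In effect your Step 1 re-derives the paper's Lemma \ref{lem:PL-QG} and your Step 2 deduces the present lemma from it, so your single argument covers both results the paper delegates to \citet{karimi2016linear}; the only trade-off is length, since the paper's choice to cite keeps the appendix short, whereas your proof makes the constants and the role of the Lipschitz-gradient assumption (only needed for well-posedness of the flow, not for the final constant) self-contained.
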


\begin{lem} \label{lem:PL-QG}
    For a $\mu$-PL function $h(x): \BR^d \rightarrow \BR$ with Lipschitz gradient, for any $x \in \BR^d$,
    \begin{align*}
        h(x) - \min_{x \in \BR^d} h(x) \ge \frac{\mu}{2} {\rm dist}^2(x,X^*). 
    \end{align*}
\end{lem}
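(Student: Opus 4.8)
The statement is the classical implication ``PL $\Rightarrow$ quadratic growth'', and the clean way to get it with the \emph{sharp} constant $\mu/2$ is via the gradient flow (a naive gradient-descent discretization only yields a constant degraded by the condition number, which is not enough here). Fix $x_0 \in \BR^d$; if $x_0 \in X^*$ there is nothing to prove, so assume $h(x_0) > h^* := \min_{z \in \BR^d} h(z)$. Since $\nabla h$ is globally Lipschitz, it has at most linear growth, so the ODE $\dot x(t) = -\nabla h(x(t))$, $x(0) = x_0$, admits a unique solution on all of $[0,\infty)$. Define $g(t) := h(x(t)) - h^* \ge 0$; then $g \in C^1$ with $g'(t) = \langle \nabla h(x(t)), \dot x(t)\rangle = -\Vert \nabla h(x(t))\Vert^2$, and the PL inequality $\Vert \nabla h(x(t))\Vert^2 \ge 2\mu\, g(t)$ gives $g'(t) \le -2\mu g(t)$, hence $g(t) \le g(0)e^{-2\mu t} \to 0$.

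The next step is to convert this energy dissipation into a bound on the length of the trajectory. On any interval where $g(t) > 0$, using $g'(t) = -\Vert \nabla h(x(t))\Vert^2$ together with the PL inequality in the form $\sqrt{g(t)} \le \Vert \nabla h(x(t))\Vert / \sqrt{2\mu}$,
\begin{align*}
\frac{d}{dt}\,\bigl(2\sqrt{g(t)}\,\bigr) \;=\; \frac{-\Vert \nabla h(x(t))\Vert^2}{\sqrt{g(t)}} \;\le\; -\sqrt{2\mu}\;\Vert \nabla h(x(t))\Vert \;=\; -\sqrt{2\mu}\;\Vert \dot x(t)\Vert .
\end{align*}
Integrating from $0$ to $T$ and using $\int_0^T \Vert \dot x(t)\Vert\,dt \ge \Vert x(T) - x_0\Vert$ yields $\Vert x(T) - x_0\Vert \le \sqrt{2/\mu}\,\bigl(\sqrt{g(0)} - \sqrt{g(T)}\,\bigr) \le \sqrt{2g(0)/\mu}$. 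If $g$ happens to vanish at some finite time, one simply restricts the integration to the interval before its first zero; because $g \in C^1$ and $g \ge 0$, the map $t \mapsto \sqrt{g(t)}$ remains continuous there, so this case causes no trouble.

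Finally, the uniform bound $\int_0^T \Vert \dot x(t)\Vert\,dt \le \sqrt{2g(0)/\mu}$ for all $T$ shows the trajectory has finite length, hence $x(T)$ is Cauchy and converges to some $\bar x$ as $T \to \infty$; by continuity $h(\bar x) = \lim_{T\to\infty} h(x(T)) = h^*$, so $\bar x \in X^*$. Passing to the limit in the displayed bound gives ${\rm dist}(x_0, X^*) \le \Vert x_0 - \bar x\Vert \le \sqrt{2(h(x_0) - h^*)/\mu}$, and squaring rearranges to $h(x_0) - h^* \ge \tfrac{\mu}{2}\,{\rm dist}^2(x_0, X^*)$. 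The only slightly delicate points are the standard ODE facts used above — global existence of the flow from the linear growth of $\nabla h$, and ``finite length $\Rightarrow$ convergence'' of the path — together with the harmless non-smoothness of $t\mapsto\sqrt{g(t)}$ at zeros of $g$; none of these require more than the stated hypotheses, and the argument is exactly the one underlying the PL lemmas of \citep{karimi2016linear}.
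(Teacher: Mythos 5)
Your proof is correct: the gradient-flow argument with the Lyapunov quantity $2\sqrt{g(t)}$ yields the sharp constant $\mu/2$, and your handling of a possible finite-time zero of $g$ is sound (in fact, by backward uniqueness of the flow through an equilibrium such a zero cannot occur in finite time, but restricting to the interval before the first zero works regardless, since at that instant the trajectory already lies in $X^*$). The paper does not prove this lemma itself — it simply cites Theorem 2 of \citet{karimi2016linear} — and your argument is essentially the standard one underlying that reference, so the approaches coincide.
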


The above two lemmas appear in Theorem 2 \citep{karimi2016linear}. Based on these results, we can prove the following lemma.

\lemsetLip*

\begin{proof}
By Lemma \ref{lem:PL-EB}, for any $y_1 \in Y_{\sigma_1}^*(x_1)$, there exists some $y_2 \in Y_{\sigma_2}^*(x_2)$ such that
     \begin{align*}
        &\quad \mu \Vert y_1 - y_2 \Vert \\
        &\le \Vert \nabla_y h_{\sigma_2}(x_2,y_1) \Vert \\
        &= \Vert \nabla_y h_{\sigma_2}(x_2,y_1) - \nabla_y h_{\sigma_1}(x_1,y_1) \Vert \\
        &\le \Vert \sigma_2 \nabla_y f(x_2,y_1) -   \sigma_1 \nabla_y f(x_1,y_1) \Vert + \Vert \nabla_y g(x_2,y_1) - \nabla_y g(x_1,y_1) \Vert \\
        &\le \Vert \sigma_2 \nabla_y f(x_2,y_1) -   \sigma_1 \nabla_y f(x_2,y_1) \Vert \\
        &\quad + \Vert \sigma_1 \nabla_y f(x_2,y_1) -   \sigma_1 \nabla_y f(x_1,y_1) \Vert +  \Vert \nabla_y g(x_2,y_1) - \nabla_y g(x_1,y_1) \Vert \\
        &\le \vert \sigma_1 - \sigma_2 \vert C_f + (\sigma L_f + L_g) \Vert x_1 - x_2 \Vert.
    \end{align*}
By symmetry, for any $y_2 \in Y_{\sigma_2}^*(x_2)$, there also exists $y_1 \in Y_{\sigma_1}^*(x_1)$ such that that the above inequality holds for $y_2,y_2$. This proves the Pompeiu–Hausdorff continuity.
\end{proof}

\lemDanskin*

\begin{proof}
    It follows the generalized Danskin's theorem proved in \cite{shen2023penalty}. See also Lemma A.2 in \cite{kwon2023penalty}.
\end{proof}

\lemKwon*
\begin{proof}
The proof follows 
Theorem 3.8 \cite{kwon2023penalty}.
The only difference is that Theorem 3.8 \cite{kwon2023penalty} states 
$ \Vert \nabla \varphi_{\sigma}(x) - \varphi(x) \Vert = \fO(\sigma \ell \kappa^5)$. But the additional $\kappa^2$ dependency comes from the perturbation in the multiplier $ {\rm d} \lambda / {\rm d} \sigma \asymp \kappa^2$. Since we only consider the unconstrained case, there is no need to consider the effect of ${\rm d} \lambda / {\rm d} \sigma $, and we can improve the bound to $ \fO(\sigma \ell \kappa^3)$.
\end{proof}
We also recall some technical lemmas from \cite{kwon2023penalty}.

\begin{lem} \label{lem:space}
Suppose $Y_{\sigma}^*(x)$ is Pompeiu–Hausdorff Lipschitz, then for any $y_{\sigma}^*(x) \in Y_{\sigma}^*(x)$,
    \begin{align*}
        {\rm Range}(\nabla_{yx}^2 h_{\sigma}(x,y_{\sigma}^*(x))) &\subseteq {\rm Range}(\nabla_{yy}^2 h_{\sigma}(x,y_{\sigma}^*(x)) ) \\
        \nabla_y f(x,y_{\sigma}^*(x)) &\in {\rm Range}(\nabla_{yy}^2 h_{\sigma}(x,y_{\sigma}^*(x)) ).
    \end{align*}
\end{lem}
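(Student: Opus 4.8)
\textbf{Proof proposal for Lemma~\ref{lem:space}.}
The plan is to exploit the Lipschitz continuity of $Y_\sigma^*(x)$ to control how a minimizer must move when $x$ is perturbed, and then read off the two range inclusions by differentiating the first-order optimality condition $\nabla_y h_\sigma(x, y_\sigma^*(x)) = 0$ along a curve of minimizers. First I would fix $x$ and a minimizer $y^* := y_\sigma^*(x) \in Y_\sigma^*(x)$. For the first inclusion, I would take an arbitrary direction $u \in \BR^{d_x}$ and consider the perturbed points $x + tu$ for small $t$. By the Pompeiu--Hausdorff Lipschitz assumption there exists a selection $y(t) \in Y_\sigma^*(x + tu)$ with $\Vert y(t) - y^* \Vert \le L t$; since each $y(t)$ is a minimizer, $\nabla_y h_\sigma(x + tu, y(t)) = 0$ identically in $t$. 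Expanding this identity to first order (using that $h_\sigma$ is twice differentiable with Lipschitz Hessian components, inherited from Assumption~\ref{asm:PL}) gives
\begin{align*}
0 = \nabla_{yx}^2 h_\sigma(x, y^*)\, u + \nabla_{yy}^2 h_\sigma(x, y^*)\, \dot y(0) + o(1),
\end{align*}
so $\nabla_{yx}^2 h_\sigma(x, y^*) u \in {\rm Range}(\nabla_{yy}^2 h_\sigma(x, y^*))$ after passing to the limit $t \to 0^+$ along a subsequence for which the (bounded) difference quotients $(y(t) - y^*)/t$ converge. Since $u$ was arbitrary, this yields ${\rm Range}(\nabla_{yx}^2 h_\sigma(x, y^*)) \subseteq {\rm Range}(\nabla_{yy}^2 h_\sigma(x, y^*))$.

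For the second inclusion, the key observation is that along the \emph{solution manifold} at fixed $x$ the function value does not change, whereas moving in a direction orthogonal to ${\rm Range}(\nabla_{yy}^2 h_\sigma(x, y^*))$ — i.e. in ${\rm Ker}(\nabla_{yy}^2 h_\sigma(x, y^*))$ — keeps $y$ a critical point to second order, hence (by the PL condition forcing a local quadratic growth transverse to the solution set, via Lemma~\ref{lem:PL-QG}) such directions stay inside $Y_\sigma^*(x)$ locally. Concretely, I would show that for $v \in {\rm Ker}(\nabla_{yy}^2 h_\sigma(x, y^*))$ the points $y^* + sv$ satisfy $h_\sigma(x, y^* + sv) - h_\sigma^*(x) = O(s^3)$, but the $\mu$-PL quadratic growth bound forces this gap to be either $0$ or $\Omega(s^2 \,{\rm dist}(y^* + sv, Y_\sigma^*)^2)$-comparable; combining these, ${\rm dist}(y^* + sv, Y_\sigma^*(x)) = o(s)$, which together with the Lipschitz/structure of the solution set lets one extract a genuine curve inside $Y_\sigma^*(x)$ tangent to $v$. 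Differentiating $\nabla_y h_\sigma = 0$ along that curve and using $\nabla_{yy}^2 h_\sigma(x, y^*) v = 0$ shows nothing new; instead the useful direction is the \emph{reverse}: I would argue $\nabla_y f(x, y^*)$ must be orthogonal to ${\rm Ker}(\nabla_{yy}^2 h_\sigma(x, y^*))$, because for $v$ in that kernel, $0 = \tfrac{d}{ds}\big|_{s=0} g$-type stationarity along the in-manifold curve forces $\langle \nabla_y f(x, y^*), v\rangle = 0$ (this uses $h_\sigma = \sigma f + g$ and that $\nabla_y h_\sigma(x, y^*) = 0$, so $\nabla_y g(x, y^*) = -\sigma \nabla_y f(x, y^*)$, and the lower-level part must itself vanish appropriately along the manifold). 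By symmetry of the Hessian, ${\rm Ker}(\nabla_{yy}^2 h_\sigma(x, y^*))^\perp = {\rm Range}(\nabla_{yy}^2 h_\sigma(x, y^*))$, giving the claimed membership.

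The main obstacle I anticipate is making the differentiation arguments rigorous \emph{without} assuming a smooth parametrization of the solution set: $Y_\sigma^*(x)$ is only Pompeiu--Hausdorff Lipschitz, not a manifold, so the "curve of minimizers" $y(t)$ need not be differentiable, and the difference quotients $(y(t) - y^*)/t$ may fail to converge outright. The standard fix is to pass to a convergent subsequence (the quotients are bounded by the Lipschitz constant $L$) and to note that any limit point $\dot y$ satisfies the first-order relation above; this suffices because we only need the \emph{existence} of some vector $\dot y$ with $\nabla_{yy}^2 h_\sigma(x,y^*)\dot y = -\nabla_{yx}^2 h_\sigma(x,y^*)u$, i.e. solvability, which is exactly a range statement. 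A secondary subtlety is the $O(s^3)$ bound: it requires Lipschitz continuity of $\nabla_{yy}^2 h_\sigma$ restricted to the relevant ball, which follows from $\rho_f$- and $\rho_g$-Hessian Lipschitzness in Assumption~\ref{asm:PL}, so I would invoke that explicitly. Beyond these points the computation is routine Taylor expansion plus linear algebra ($\BR^{d_y} = {\rm Range}(A) \oplus {\rm Ker}(A)$ for symmetric $A$), so I would not belabor it; the cited source \citet{kwon2023penalty} presumably carries out an equivalent argument and I would align notation with theirs.
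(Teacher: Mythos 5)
The paper disposes of this lemma by a bare citation to Proposition~3.1 of \citet{kwon2023penalty}, so the comparison must be to the argument you propose on its own merits. Your treatment of the \emph{first} inclusion is sound: perturb $x$ by $tu$, use Lipschitz continuity of $Y_\sigma^*$ to pick a bounded-distance selection $y(t)\in Y_\sigma^*(x+tu)$, expand $\nabla_y h_\sigma(x+tu, y(t))=0$ to first order, divide by $t$, and extract a subsequential limit $\dot y$ of the bounded difference quotients $(y(t)-y^*)/t$ to conclude $\nabla_{yx}^2 h_\sigma(x,y^*)\,u = -\nabla_{yy}^2 h_\sigma(x,y^*)\,\dot y$, which is a range statement. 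That is the right argument.

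Your treatment of the \emph{second} inclusion, however, has a genuine gap. You want to show $\langle \nabla_y f(x,y^*), v\rangle = 0$ for $v\in{\rm Ker}(\nabla_{yy}^2 h_\sigma(x,y^*))$, and you propose to get this from ``$g$-type stationarity along the in-manifold curve.'' But along a curve $\gamma(s)\subseteq Y_\sigma^*(x)$ the only exact stationarity available is $\nabla_y h_\sigma(x,\gamma(s))=0$; differentiating that in $s$ gives $\nabla_{yy}^2 h_\sigma(x,y^*)\,v = 0$, which is the kernel membership you started from, not the orthogonality you want. There is no separate stationarity of $g$ (or $f$) alone along $Y_\sigma^*(x)$: $y^*$ minimizes $\sigma f + g$, not $g$, and indeed $\nabla_y g(x,y^*)=-\sigma\nabla_y f(x,y^*)$ is typically nonzero. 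So the chain of implications you sketch does not close, and the $O(s^3)$ detour (while a correct observation, producing points of $Y_\sigma^*(x)$ within $o(s)$ of $y^*+sv$) only re-derives $\nabla_{yy}^2 h_\sigma\,v=0$, as you yourself note.

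The missing idea is a symmetry you already have in hand but did not exploit: run exactly the same selection-plus-subsequence argument as in the first inclusion, but perturb $\sigma$ instead of $x$. Lemma~\ref{lem:set-Lip} gives Lipschitz continuity of $Y_\sigma^*(x)$ in $\sigma$ as well, so there is a selection $y(t)\in Y_{\sigma+t}^*(x)$ with $\Vert y(t)-y^*\Vert \le (C_f/\mu)\,|t|$, and
\begin{align*}
0 \;=\; \nabla_y h_{\sigma+t}(x,y(t)) \;=\; \nabla_y h_\sigma(x,y(t)) + t\,\nabla_y f(x,y(t))
\;=\; \nabla_{yy}^2 h_\sigma(x,y^*)\bigl(y(t)-y^*\bigr) + t\,\nabla_y f(x,y^*) + o(t),
\end{align*}
using $\partial h_\sigma/\partial\sigma = f$ and a first-order expansion. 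Dividing by $t$ and passing to a convergent subsequence of $(y(t)-y^*)/t$ yields $\nabla_y f(x,y^*) = -\nabla_{yy}^2 h_\sigma(x,y^*)\,\dot y$, hence the claimed membership. This is a one-line modification of your first argument, whereas the ``orthogonality via in-manifold curves'' route you propose does not produce the needed identity.
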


\begin{proof}
See Proposition 3.1 \citep{kwon2023penalty}. 
\end{proof}

We remark that Proposition 6 \citep{arbel2022non} also presents a similar argument as the above lemma for Morse-Bott functions.

\begin{lem} \label{lem:nabla-sigma}
Under Assumption \ref{asm:PL},
there exists some $\sigma \in [0,\sigma']$ such that
\begin{align*}
    \nabla \varphi_{\sigma'}(x) = \nabla_x f(x,y_{\sigma}^*(x)) - \nabla_{xy}^2 h_{\sigma}(x,y_{\sigma}^*(x)) \left( \nabla_{yy}^2 h_{\sigma}(x,y_{\sigma}^*(x)) \right)^{\dagger} \nabla_y  f(x,y_{\sigma}^*(x)).
\end{align*}
for any $y_{\sigma}^*(x) \in Y_{\sigma}^*(x)$.
\end{lem}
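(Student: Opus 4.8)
The plan is to derive the explicit form of $\nabla \varphi_{\sigma'}(x)$ by combining Lemma~\ref{lem:Danskin} with a Taylor/first-order expansion of $\nabla_x g$ along the perturbation $\sigma$, and then identifying the resulting difference quotient as the Moore--Penrose solve that appears in the statement. Concretely, Lemma~\ref{lem:Danskin} already tells us that
\[
\nabla \varphi_{\sigma'}(x) = \nabla_x f(x,y^*(x)) + \frac{\nabla_x g(x,y_{\sigma'}^*(x)) - \nabla_x g(x,y^*(x))}{\sigma'},
\]
where $y^*(x)\in Y^*(x)$ and $y_{\sigma'}^*(x)\in Y_{\sigma'}^*(x)$ can be chosen arbitrarily. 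The key structural fact is that the map $\sigma \mapsto y_\sigma^*(x)$ (after a suitable selection of minimizers) is Lipschitz by Lemma~\ref{lem:set-Lip}, hence absolutely continuous, so we may write $\nabla_x g(x,y_{\sigma'}^*(x)) - \nabla_x g(x,y_0^*(x)) = \int_0^{\sigma'} \nabla_{xy}^2 g(x,y_\sigma^*(x))\, \tfrac{d}{d\sigma} y_\sigma^*(x)\, d\sigma$, and the mean value theorem for vector-valued integrals (applied coordinatewise, or a mean-value-inclusion argument) produces a single $\sigma\in[0,\sigma']$ realizing the average.

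First I would establish that a Lipschitz selection $\sigma\mapsto y_\sigma^*(x)$ exists and is differentiable for a.e.\ $\sigma$, with derivative characterized by implicit differentiation of the stationarity condition $\nabla_y h_\sigma(x,y_\sigma^*(x)) = \sigma \nabla_y f(x,y_\sigma^*(x)) + \nabla_y g(x,y_\sigma^*(x)) = 0$. Differentiating in $\sigma$ gives $\nabla_y f(x,y_\sigma^*) + \nabla_{yy}^2 h_\sigma(x,y_\sigma^*)\, \tfrac{d}{d\sigma}y_\sigma^* = 0$, so that $\tfrac{d}{d\sigma}y_\sigma^* = -(\nabla_{yy}^2 h_\sigma(x,y_\sigma^*))^\dagger \nabla_y f(x,y_\sigma^*)$; here the pseudoinverse is the correct object because $\nabla_{yy}^2 h_\sigma$ may be singular under the PL condition (its kernel is tangent to the solution manifold), and Lemma~\ref{lem:space} guarantees that $\nabla_y f(x,y_\sigma^*)\in{\rm Range}(\nabla_{yy}^2 h_\sigma(x,y_\sigma^*))$ so that $(\nabla_{yy}^2 h_\sigma)^\dagger \nabla_y f$ is exactly the minimum-norm solution and the formula is consistent regardless of which minimizer in $Y_\sigma^*(x)$ is selected. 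Substituting back, $\tfrac{d}{d\sigma}\nabla_x g(x,y_\sigma^*(x)) = \nabla_{xy}^2 g(x,y_\sigma^*)\,\tfrac{d}{d\sigma}y_\sigma^* = -\nabla_{xy}^2 g(x,y_\sigma^*)(\nabla_{yy}^2 h_\sigma(x,y_\sigma^*))^\dagger \nabla_y f(x,y_\sigma^*)$, and since $\nabla_{xy}^2 h_\sigma = \nabla_{xy}^2 g + \sigma \nabla_{xy}^2 f$ this is, up to the bookkeeping of replacing $g$-Hessians by $h_\sigma$-Hessians, the integrand we want. Then I would invoke the integral mean value theorem to pick $\sigma\in[0,\sigma']$ with $\tfrac{1}{\sigma'}\int_0^{\sigma'}(\cdots)\,d\sigma$ equal to the value at that $\sigma$, add $\nabla_x f(x,y^*(x))$, and note that taking $\sigma\to 0$ recovers $\nabla_x f(x,y^*(x))$ with the $g$-Hessians (since $h_0 = g$), so the final expression can be stated uniformly with $h_\sigma$-Hessians.

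The main obstacle I expect is the regularity bookkeeping around the pseudoinverse: a priori $y_\sigma^*(x)$ is only Lipschitz (so differentiable a.e.), the rank of $\nabla_{yy}^2 h_\sigma(x,y_\sigma^*(x))$ could jump, and $\sigma\mapsto (\nabla_{yy}^2 h_\sigma)^\dagger$ need not be continuous at rank changes, so one must be careful that the integrand is measurable and that the mean-value step is legitimate (it may be cleaner to apply the mean value theorem to each scalar component of the vector-valued quotient and then argue that a common $\sigma$ can be extracted, or to phrase the conclusion as "there exists $\sigma$" after a compactness/continuity argument on the finite-dimensional selection). A secondary subtlety is making the minimum-norm-solution claim independent of the selection of $y_\sigma^*(x)\in Y_\sigma^*(x)$: this is exactly what Lemma~\ref{lem:space} and the identity ${\rm Ker}(\nabla_{yy}^2 h_\sigma) \perp {\rm Range}((\nabla_{yy}^2 h_\sigma)^\dagger \nabla_y f)$ deliver, combined with the fact that along the solution manifold $\nabla_x g$ (equivalently the reduced gradient) is constant, so that the quotient in Lemma~\ref{lem:Danskin} is well-defined; I would cite Lemma~\ref{lem:space} for this and keep the argument short.
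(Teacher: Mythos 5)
Your route is genuinely different from the paper's, and as written it proves a different formula than the one claimed. The paper applies the scalar mean-value theorem to $\sigma \mapsto l(x,\sigma)$ (where $l(x,\sigma) = \min_y h_\sigma(x,y)$), obtaining $\varphi_{\sigma'}(x) = \partial_\sigma l(x,\sigma)$ at a single $\sigma$, and then differentiates in $x$; via Danskin $\partial_\sigma l(x,\sigma) = f(x,y_\sigma^*(x))$, and implicit differentiation of the stationarity condition \emph{in $x$} produces $\partial_x y_\sigma^* = -(\nabla_{yy}^2 h_\sigma)^\dagger \nabla_{yx}^2 h_\sigma$, so both $\nabla_x f$ and the cross-Hessian in the final formula are evaluated with $h_\sigma$ and at $y_\sigma^*(x)$. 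You instead start from Lemma~\ref{lem:Danskin} and implicitly differentiate $y_\sigma^*$ \emph{in $\sigma$}, which yields the integrand $-\nabla_{xy}^2 g(x,y_\sigma^*)\,(\nabla_{yy}^2 h_\sigma(x,y_\sigma^*))^\dagger\,\nabla_y f(x,y_\sigma^*)$ and leaves the explicit term as $\nabla_x f(x,y^*(x))$ with $y^*(x) \in Y^*(x)$, i.e.\ evaluated at the $\sigma=0$ selection. These are not the same as the target: the target has $\nabla_{xy}^2 h_\sigma$ (not $\nabla_{xy}^2 g$) and $\nabla_x f(x,y_\sigma^*(x))$ (not $\nabla_x f(x,y^*(x))$). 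You flag the first discrepancy as ``bookkeeping,'' but it is not a bookkeeping issue --- the formulas differ by $O(\sigma)$-sized terms, and no identity you invoke cancels them. In particular, the kernel-invariance you lean on via Lemma~\ref{lem:space} controls $\nabla_{xy}^2 h_\sigma \cdot v$ for $v \in {\rm Ker}(\nabla_{yy}^2 h_\sigma)$, not $\nabla_{xy}^2 g \cdot v$, so your claim that the selection of minimizer does not affect the integrand is not justified at $\sigma > 0$.

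There is a second, structural gap that you correctly anticipate but do not close: the mean value theorem does not hold for vector-valued functions, so the coordinatewise application you suggest produces a different $\sigma$ per coordinate, and the ``mean-value inclusion'' variant only places the average in the convex hull of integrand values, not at a single point $\sigma$. The paper sidesteps this (at least formally) by applying the scalar MVT to $l(x,\sigma)$ \emph{before} any differentiation in $x$, so only one $\sigma$ ever appears. Your proposal has no mechanism to extract a single $\sigma$ common to all coordinates of the vector-valued difference quotient, and the vague appeal to ``compactness/continuity'' does not fill this hole. To salvage your approach you would need either to switch to the paper's scalar-first decomposition (which is really the entire content of the argument), or to restate the lemma with an integral or convex-combination representation rather than a single intermediate $\sigma$, and then separately prove that the integrand equals the $h_\sigma$-form --- which would require the additional computation relating $\partial_\sigma y_\sigma^*$ and $\partial_x y_\sigma^*$ that you currently skip.
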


\begin{proof}
We can express $\varphi_{\sigma'}(x)$ by
\begin{align*}
    \varphi_{\sigma'}(x) = \frac{l(x,\sigma') - l(x,0)}{\sigma'} = \frac{\partial}{\partial \sigma} l(x,\sigma), \quad \exists \sigma \in [0,\sigma'],
\end{align*}
where $l(x;\sigma') = \min_{y} h_{\sigma'}(x,y)$ and we apply the mean-value theorem in the second equality. Taking derivative with respect to $x$ in the above equation yields
\begin{align*}
    \nabla \varphi_{\sigma'}(x) = \dfrac{\partial^2}{\partial x \partial \sigma} l(x,\sigma).
\end{align*}
Finally, we plug in the explicit form of  $\dfrac{\partial^2}{\partial x \partial \sigma} l(x,\sigma)$ by
Theorem 3.2 \citep{kwon2023penalty}.

\end{proof}

% \begin{lem} \label{lem:pertubation}
% Let $Q \Lambda Q^\top$ be the eigenvalue decomposition of $\nabla_{yy}^2 h_{\sigma}(x,y)$. Let $U$ be the matrix whose columns are the eigenvectors corresponding to non-zero eigenvalues of $\Lambda$. Let $Y_{\sigma}^*(x)$
% \end{lem}

The following lemma is a fact from linear algebra.
\begin{lem} \label{lem:UV}
If ${\rm Range}(U^\top) \subseteq {\rm Range}(A^\top)$ and ${\rm Range}(V) \subseteq {\rm Range}(B)$, then
\begin{align*}
    U(A^{\dagger} - B^{\dagger}) V = UA^{\dagger} (B-A) B^{\dagger} V.
\end{align*}
\end{lem}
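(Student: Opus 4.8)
The plan is to reduce the identity to two elementary projection facts about the Moore--Penrose inverse. Recall that for any matrix $A$, the product $A^{\dagger}A$ is the orthogonal projector onto ${\rm Range}(A^{\top})$ and $AA^{\dagger}$ is the orthogonal projector onto ${\rm Range}(A)$; in particular both are symmetric and idempotent. I would cite these as the only background facts required, together with the defining property that an orthogonal projector fixes exactly the vectors of the subspace it projects onto.

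First I would use the hypothesis ${\rm Range}(V)\subseteq{\rm Range}(B)$: since $BB^{\dagger}$ fixes every vector of ${\rm Range}(B)$, it fixes each column of $V$, hence $BB^{\dagger}V=V$. Next I would use ${\rm Range}(U^{\top})\subseteq{\rm Range}(A^{\top})$: because $A^{\dagger}A$ is the orthogonal projector onto ${\rm Range}(A^{\top})$, it fixes each column of $U^{\top}$, i.e. $A^{\dagger}A\,U^{\top}=U^{\top}$; transposing this and using $(A^{\dagger}A)^{\top}=A^{\dagger}A$ gives $U A^{\dagger}A=U$.

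With these two identities in hand the computation is immediate:
\begin{align*}
 U A^{\dagger}(B-A)B^{\dagger}V &= U A^{\dagger}BB^{\dagger}V - U A^{\dagger}A\,B^{\dagger}V \\
 &= U A^{\dagger}V - U B^{\dagger}V = U(A^{\dagger}-B^{\dagger})V,
\end{align*}
where the second equality substitutes $BB^{\dagger}V=V$ into the first term and $U A^{\dagger}A=U$ into the second term. This is exactly the claimed identity, and no rank or dimension assumptions on $A,B,U,V$ beyond compatibility of the matrix products are needed.

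The argument is routine, so there is no genuine obstacle; the only point that demands care is bookkeeping about which subspace each of the relevant projectors lands on --- in particular remembering that $A^{\dagger}A$ projects onto the row space ${\rm Range}(A^{\top})$, not the column space, which is precisely why the $U$-side hypothesis is stated as the transpose condition ${\rm Range}(U^{\top})\subseteq{\rm Range}(A^{\top})$ rather than ${\rm Range}(U)\subseteq{\rm Range}(A)$.
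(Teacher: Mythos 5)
Your proof is correct and takes essentially the same route as the paper: both establish the two absorption identities $BB^\dagger V = V$ and $UA^\dagger A = U$ from the range hypotheses and then expand $UA^\dagger(B-A)B^\dagger V$ term by term. The paper derives these by factoring $V = BP$, $U = QA$ and invoking $AA^\dagger A = A$, whereas you invoke the projector characterizations of $A^\dagger A$ and $BB^\dagger$; these are interchangeable packagings of the same fact. One small note: the paper states the $U$-side identity as ``$U = UAA^\dagger$'', which is a typo --- from $U = QA$ one gets $UA^\dagger A = QAA^\dagger A = QA = U$, i.e.\ $U A^\dagger A = U$, exactly as you wrote, and that is the identity actually used in the cancellation.
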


\begin{proof}
If there exists some matrix $P$ such that $V = BP$, then
\begin{align*}
    V = BP = B B^\dagger B P = BB^\dagger V.
 \end{align*}
 Similarly, if there exists some matrix $Q$ such that $U = QA $, then $ U = U AA^\dagger $.
Combining these two identities completes the proof.
\end{proof}

Under the PL condition, the smallest eigenvalue of Hessian at any minimum is bounded below.
\begin{lem} \label{lem:PL-singular}
For a $\mu$-PL function $h(x): \BR^d \rightarrow \BR$ that is twice differentiable, at any $x^* \in \arg \min_{x \in \BR^d} h(x)$, 
\begin{align*}
    {\lambda}_{\min}^+  \left( \nabla^2 h(x^*) \right) \ge \mu, 
\end{align*}
where ${\lambda}_{\min}^+(\,\cdot\,)$ denotes the smallest non-zero eigenvalue.
\end{lem}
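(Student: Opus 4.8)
The plan is to argue by contradiction via a second-order Taylor expansion of $h$ around the minimizer $x^*$. Suppose, for contradiction, that the smallest \emph{nonzero} eigenvalue satisfies $\lambda_{\min}^+(\nabla^2 h(x^*)) = \lambda < \mu$. Let $v$ be a unit eigenvector of $\nabla^2 h(x^*)$ with eigenvalue $\lambda$; note $v \notin {\rm Ker}(\nabla^2 h(x^*))$. Since $x^*$ is a global minimizer we have $\nabla h(x^*) = 0$, and twice differentiability gives the expansion
\begin{align*}
    h(x^* + t v) = h(x^*) + \tfrac{1}{2} t^2 \, v^\top \nabla^2 h(x^*) v + o(t^2) = h(x^*) + \tfrac{1}{2}\lambda t^2 + o(t^2),
\end{align*}
and similarly $\nabla h(x^* + tv) = t\, \nabla^2 h(x^*) v + o(t) = t\lambda v + o(t)$, so that $\Vert \nabla h(x^*+tv) \Vert^2 = t^2 \lambda^2 + o(t^2)$. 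Plugging these two expansions into the PL inequality $\Vert \nabla h(x)\Vert^2 \ge 2\mu\,(h(x) - \min h)$ at the point $x = x^* + tv$ yields
\begin{align*}
    t^2 \lambda^2 + o(t^2) \ge 2\mu \left( \tfrac{1}{2}\lambda t^2 + o(t^2)\right) = \mu \lambda t^2 + o(t^2).
\end{align*}
Dividing by $t^2 > 0$ and letting $t \to 0^+$ gives $\lambda^2 \ge \mu \lambda$, i.e. $\lambda \ge \mu$ (recall $\lambda > 0$ since it is a \emph{nonzero} eigenvalue), contradicting $\lambda < \mu$. Hence $\lambda_{\min}^+(\nabla^2 h(x^*)) \ge \mu$.

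The only subtlety worth flagging — and what I'd treat as the main obstacle — is ensuring the contradiction is not vacuous: the claim concerns the smallest \emph{nonzero} eigenvalue, so one must perturb strictly along an eigenvector outside the kernel, which is exactly why I pick $v$ with eigenvalue $\lambda = \lambda_{\min}^+ > 0$. If $\nabla^2 h(x^*) = O$ the statement is vacuously true (no nonzero eigenvalue exists), so that degenerate case needs no argument. One could alternatively avoid Taylor's theorem entirely and instead quote Lemma \ref{lem:PL-EB} (the PL-to-error-bound implication) together with a linear-algebra fact: near $x^*$, $\Vert \nabla h(x)\Vert \le \Vert \nabla^2 h(x^*)\Vert_{\mathrm{op}} \cdot {\rm dist}(x, X^*)$ restricted to the range of the Hessian, while the error bound forces $\Vert \nabla h(x)\Vert \ge \mu\,{\rm dist}(x,X^*)$; balancing the two along the relevant eigendirection again gives $\lambda \ge \mu$. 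Either route is short; the Taylor-expansion version above is the cleanest and self-contained.
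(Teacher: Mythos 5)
Your proof is correct, and it takes a genuinely different — and arguably cleaner — route than the paper's. You apply the PL inequality directly at the perturbed point $x^* + tv$, expanding both $h$ (to second order) and $\nabla h$ (to first order) via Taylor's theorem with Peano remainder, and then let $t \to 0$ to get $\lambda^2 \ge \mu\lambda$. The paper instead invokes the quadratic-growth consequence of PL (Lemma~\ref{lem:PL-QG}), perturbs along $v_r$ to get $x_t = x^* + t v_r$, projects onto the solution set $X^*$ to define $\hat x_t$, expands $h(\hat x_t)=h(x^*)$ to show the $v_r$-component $\alpha_r$ of $\hat x_t - x^*$ is $o(t)$, and only then balances quadratic growth against a Taylor expansion of $h(x_t)$. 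Your argument sidesteps the auxiliary lemma and the projection bookkeeping entirely, needing only the PL definition and pointwise twice-differentiability at $x^*$; the paper's route is more elaborate but illustrates the geometry of the solution set, which is used elsewhere in their analysis. One minor remark: the contradiction framing is unnecessary — your computation proves $\lambda \ge \mu$ directly for any eigenvector with a nonzero eigenvalue (positivity of nonzero eigenvalues follows from $\nabla^2 h(x^*)\succeq 0$ at a minimizer), which immediately gives the claimed bound on $\lambda_{\min}^+$.
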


\begin{proof}
 Let $ \lambda_1, \lambda_2, \cdots, \lambda_d$ be the eigenvalues of $\nabla^2 h(x^*)$ in descending order, and $v_1,v_2,\cdots,v_d$ be the corresponding unit eigenvectors which are mutually orthogonal. Let $r$ be the rank of $\nabla^2 h(x^*) $. Then ${\rm Span}(v_1,\cdots,v_r) = {\rm Range}(\nabla^2 h(x^*))$, and  ${\rm Span}(v_{r+1},\cdots,v_d) = {\rm Ker}(\nabla^2 h(x^*))$.

Let $X^* = \arg \min_{x \in \BR^d} h(x)$, $x_t = x^* + t v_r$ and $\hat x_t = \arg \min_{x \in X^*} \Vert x_t - x \Vert$. There exist some coefficients $\alpha_i$ such that $\hat x_t - x^* = \sum_{i=1}^d \alpha_i v_i$. By the Taylor's expansion %Let $\hat x_t - x^* = w_t^{\rm \parallel} + w_t^{\rm \perp} $ be the orthogonal decomposition of $ \hat x_t - x^*$ into the range and kernel space of $\nabla^2 h(x^*)$, respectively. By the Taylor's expansion 
\begin{align*}
    0 &= h(\hat x_t) - h(x^*)  \\
    &= \frac{1}{2}(\hat x_t - x^*)^\top \nabla^2 h(x^*) (\hat x_t - x^*)  + o\left( \Vert \hat x_t-x^* \Vert^2 \right) \\
    &= \frac{1}{2} \sum_{i=1}^r  \lambda_i \alpha_i^2 +  o\left( \Vert \hat x_t-x^* \Vert^2 \right) \\
    &\ge \frac{1}{2} \lambda_r \alpha_r^2  +  o\left( \Vert \hat x_t-x^* \Vert^2 \right).
\end{align*}
By triangle inequality and the definition of $\hat x_t$, we have
\begin{align*}
    \Vert \hat x_t - x^* \Vert \le \Vert x_t - x^* \Vert + \Vert x_t - \hat x_t \Vert \le 2 \Vert x_t - x^* \Vert = 2 t.
\end{align*}
Therefore,
\begin{align*}
     \lambda_r \alpha_r^2 = o\left( \Vert \hat x_t - x^* \Vert^2 \right) =  o\left( t^2 \right).
\end{align*}
On the one hand, Lemma \ref{lem:PL-QG} indicates
\begin{align*}
    &\quad h(x_t) - h(x^*) \\
    &\ge \frac{\mu}{2} \Vert x_t- \hat x_t \Vert^2 \\
    & = \frac{\mu}{2} \Vert x_t- x^* + x^*-  \hat x_t \Vert^2 \\
    &=  \frac{\mu}{2} \left( \left \Vert t v_r - \alpha_r v_r \right \Vert^2 + \left \Vert \sum_{i \ne r} \alpha_i v_i \right \Vert^2   \right) \\
    &\ge \frac{\mu }{2} (t-\alpha_r)^2.
\end{align*}
On the other hand, the Taylor's expansion also indicates
\begin{align*}
    h(x_t) - h(x^*) &= \frac{1}{2}(x_t - x^*)^\top \nabla^2 h(x^*) (x_t - x^*) + o \left( \Vert x_t - x^* \Vert^2 \right) = \frac{\lambda_r}{2} t^2 + o \left( t^2 \right).
\end{align*}
Putting these two hands  together 
\begin{align*}
    \frac{\mu }{2} (t-\alpha_r)^2  \le \frac{\lambda_r}{2} t^2 + o(t^2).
\end{align*}
Using $\alpha_r = o(t)$ and letting $t \rightarrow 0$, we conclude that $\lambda_r \ge \mu$.
\end{proof}

Lemma \ref{lem:Kwon} only claims the existence of $\nabla \varphi(x)$. Below, we give the explicit form of $\nabla \varphi(x)$.

\begin{lem} \label{lem:nabla-phi-form}
Under Assumption \ref{asm:PL},
\begin{align} \label{eq:nabla-phi}
    \nabla \varphi(x) = \nabla_x f(x,y^*(x)) - \nabla_{xy}^2 g(x,y^*(x)) \left( \nabla_{yy}^2 g (x,y^*(x)) \right)^\dagger \nabla_y f(x,y^*(x))
\end{align}
for any $y^*(x) \in Y^*(x)$.
\end{lem}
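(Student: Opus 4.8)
The plan is to obtain the formula by passing to the limit $\sigma\to 0^+$ in the expression for $\nabla\varphi_\sigma(x)$ furnished by Lemma~\ref{lem:nabla-sigma}, together with the identity $\nabla\varphi(x)=\lim_{\sigma\to 0^+}\nabla\varphi_\sigma(x)$ from Lemma~\ref{lem:Kwon}. First I would fix an arbitrary $y^*(x)\in Y^*(x)$ --- this is the point at which we want to verify the formula. For any $\sigma'>0$, Lemma~\ref{lem:nabla-sigma} supplies some $\sigma=\sigma(\sigma')\in[0,\sigma']$ such that, abbreviating $y_\sigma^*:=y_\sigma^*(x)$, for \emph{every} $y_\sigma^*\in Y_\sigma^*(x)$,
\[
\nabla\varphi_{\sigma'}(x)=\nabla_x f(x,y_\sigma^*)-\nabla_{xy}^2 h_\sigma(x,y_\sigma^*)\bigl(\nabla_{yy}^2 h_\sigma(x,y_\sigma^*)\bigr)^{\dagger}\nabla_y f(x,y_\sigma^*).
\]
By Lemma~\ref{lem:set-Lip} with $\sigma_1=0$, $\sigma_2=\sigma$, $x_1=x_2=x$ we have ${\rm dist}(Y^*(x),Y_\sigma^*(x))\le (C_f/\mu)\sigma$, so I may pick $y_\sigma^*\in Y_\sigma^*(x)$ with $\Vert y_\sigma^*-y^*(x)\Vert\le (C_f/\mu)\sigma$ --- note this is possible for \emph{any} prescribed $y^*(x)\in Y^*(x)$, which is exactly what lets the limit reach every element of $Y^*(x)$. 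As $\sigma'\to 0^+$ we then have $\sigma(\sigma')\to 0^+$ and $y_{\sigma(\sigma')}^*\to y^*(x)$.

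Next I would take the limit term by term. The gradients $\nabla_x f$ and $\nabla_y f$ are continuous (Assumption~\ref{asm:PL}b), and $\nabla_{xy}^2 h_\sigma(x,y)=\nabla_{xy}^2 g(x,y)+\sigma\nabla_{xy}^2 f(x,y)\to\nabla_{xy}^2 g(x,y^*(x))$ because $\nabla^2 g$ and $\nabla^2 f$ are continuous (Assumption~\ref{asm:PL}d,e) and $\sigma\to 0$. The only delicate term is the Moore--Penrose inverse, since $A\mapsto A^{\dagger}$ is discontinuous in general; this is the main obstacle, and I would resolve it by showing that the rank of $\nabla_{yy}^2 h_\sigma(x,y_\sigma^*)$ is eventually constant. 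Indeed, $y_\sigma^*$ is a global minimizer of $h_\sigma(x,\cdot)$, so $\nabla_{yy}^2 h_\sigma(x,y_\sigma^*)\succeq 0$; and $h_\sigma(x,\cdot)$ is $\mu$-PL (Assumption~\ref{asm:PL}a, which includes $\sigma=0$, i.e.\ $g$ is $\mu$-PL), so Lemma~\ref{lem:PL-singular} gives that every nonzero eigenvalue is $\ge\mu$. Hence the spectra of $\nabla_{yy}^2 h_\sigma(x,y_\sigma^*)$ and of the limit $\nabla_{yy}^2 g(x,y^*(x))$ both lie in $\{0\}\cup[\mu,\infty)$, so the interval $(0,\mu)$ is free of eigenvalues. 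By Weyl's inequality the eigenvalues depend continuously on the matrix, so for $\sigma$ small enough the number of zero eigenvalues --- hence the rank and the kernel --- of $\nabla_{yy}^2 h_\sigma(x,y_\sigma^*)$ coincides with that of $\nabla_{yy}^2 g(x,y^*(x))$. On the set of matrices of a fixed rank the pseudoinverse is continuous, so $\bigl(\nabla_{yy}^2 h_\sigma(x,y_\sigma^*)\bigr)^{\dagger}\to\bigl(\nabla_{yy}^2 g(x,y^*(x))\bigr)^{\dagger}$.

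Combining these limits, the displayed right-hand side converges to $\nabla_x f(x,y^*(x))-\nabla_{xy}^2 g(x,y^*(x))\bigl(\nabla_{yy}^2 g(x,y^*(x))\bigr)^{\dagger}\nabla_y f(x,y^*(x))$ (here $h_\sigma\to h_0=g$), while the left-hand side converges to $\nabla\varphi(x)$ by Lemma~\ref{lem:Kwon}. This yields the claimed identity at $y^*(x)$, and since $y^*(x)\in Y^*(x)$ was arbitrary we are done; as a byproduct this also shows the right-hand side is independent of the choice of $y^*(x)\in Y^*(x)$. If one wished to avoid the eigenvalue-gap argument, an alternative route would be to prove directly that $y\mapsto\nabla_{xy}^2 g(x,y)\bigl(\nabla_{yy}^2 g(x,y)\bigr)^{\dagger}\nabla_y f(x,y)$ is continuous near $Y^*(x)$ using the range inclusions in Lemma~\ref{lem:space}, but the rank/spectral-gap argument above is the cleanest and is the step I expect to require the most care.
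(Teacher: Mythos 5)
Your overall strategy is the same as the paper's: both start from the explicit form of $\nabla\varphi_{\sigma'}(x)$ in Lemma~\ref{lem:nabla-sigma}, use Lemma~\ref{lem:set-Lip} to pick $y_\sigma^*(x)$ converging to the prescribed $y^*(x)$, and pass to the limit $\sigma'\to 0^+$ via Lemma~\ref{lem:Kwon}. Where you differ is in how the pseudoinverse term is controlled, and you have correctly identified this as the only delicate point. The paper does not argue rank-continuity of $A\mapsto A^\dagger$ at all; instead it uses the algebraic identity of Lemma~\ref{lem:UV}, namely $U(A^\dagger-B^\dagger)V = UA^\dagger(B-A)B^\dagger V$ under the range inclusions ${\rm Range}(U^\top)\subseteq{\rm Range}(A^\top)$, ${\rm Range}(V)\subseteq{\rm Range}(B)$, which Lemma~\ref{lem:space} provides. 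Combined with the lower bound $\lambda_{\min}^+\ge\mu$ from Lemma~\ref{lem:PL-singular} this yields a fully quantitative estimate $\Vert\nabla\varphi_{\sigma'}(x)-H(x)\Vert=\fO(\sigma)$. Your route instead establishes convergence qualitatively: the spectral gap $(0,\mu)$ forced by Lemma~\ref{lem:PL-singular} at any minimizer of a $\mu$-PL function, plus Weyl's inequality, implies the rank of $\nabla_{yy}^2 h_\sigma(x,y_\sigma^*(x))$ is eventually constant and equal to that of $\nabla_{yy}^2 g(x,y^*(x))$, on which set $A\mapsto A^\dagger$ is continuous. Both are valid; the paper's version has the advantage of producing an explicit $\fO(\sigma)$ rate (reusable elsewhere, e.g.\ in Lemma~\ref{lem:grad-Lip}) without invoking the somewhat nontrivial fact that pseudoinversion is continuous precisely on constant-rank strata, while yours is arguably more transparent conceptually and avoids the range-inclusion bookkeeping. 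Your closing remark correctly anticipates the paper's alternative, though note that the paper does not prove continuity of $y\mapsto\nabla_{xy}^2 g\,(\nabla_{yy}^2 g)^\dagger\nabla_y f$ directly; it bounds the difference via the telescoping identity of Lemma~\ref{lem:UV}.
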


\begin{proof}
Let $ H(x)$ be the right-hand side of \Eqref{eq:nabla-phi}. Below, we show that $\nabla \varphi(x) = H(x)$.

Recall Lemma \ref{lem:nabla-sigma} that for any $\sigma' \ge 0$ there exists some $\sigma \in [0,\sigma']$
such that
\begin{align*}
    \nabla \varphi_{\sigma'}(x) = \nabla_x f(x,y_{\sigma}^*(x)) - \nabla_{xy}^2 h_{\sigma}(x,y_{\sigma}^*(x)) \left( \nabla_{yy}^2 h_{\sigma}(x,y_{\sigma}^*(x)) \right)^{\dagger} \nabla_y  f(x,y_{\sigma}^*(x)).
\end{align*}
for any $y_{\sigma}^*(x) \in Y_{\sigma}^*(x)$.
Then for any $y^*(x) \in Y^*(x)$, by Lemma \ref{lem:set-Lip}, there exists $y_{\sigma}^*(x) \in Y_{\sigma}^*(x)$ such that $ \Vert y_{\sigma}^*(x) - y^*(x) \Vert \le C_f \sigma/ \mu$. Then by Lemma \ref{lem:space} and Lemma \ref{lem:UV}, we have
\begin{align*}
    &\quad \Vert \nabla \varphi_{\sigma'}(x) - H(x) \Vert \\
    &\le \Vert \nabla_x f(x,y_{\sigma}^*(x)) - \nabla_x f(x,y^*(x))) \Vert \\
    &\quad +   \left \Vert \left( \nabla_{xy}^2 h_{\sigma} (x,y_{\sigma}^*(x)) - \nabla_{xy}^2 g (x,y^*(x))\right) \left( \nabla_{yy}^2 h_{\sigma}(x,y_{\sigma}^*(x)) \right)^{\dagger} \nabla_y f(x,y^*(x))) \right \Vert \\
    &\quad +  \Big \Vert \nabla_{xy}^2 g (x,y^*(x)) \left( \nabla_{yy}^2 g (x,y^*(x)) \right)^{\dagger} \\
    &~~~~~~~~~~~~~~~~~~~~~~~~~
    \left( \nabla_{yy}^2 g (x,y^*(x)) - \nabla_{yy}^2 h_{\sigma} (x,y_{\sigma}^*(x)) \right) \left( \nabla_{yy}^2h_{\sigma} (x,y_{\sigma}^*(x)) \right)^{\dagger}
    \nabla_y f(x,y_{\sigma}^*(x)) 
    \Big \Vert \\
    &\quad + \left \Vert \nabla_{xy}^2 g(x,y^*(x))  \left(  \nabla_{yy}^2 g (x,y^*(x)) \right)^{\dagger} 
    \left( \nabla_y f(x,y^*(x)) - \nabla_y f(x,y_{\sigma}^*(x))  \right)
    \right \Vert
\end{align*}
We then use Lemma \ref{lem:PL-singular} to have a further upper bound as
% \begin{align*}
%    &\quad \Vert \nabla \varphi_{\sigma'}(x) - H(x) \Vert  \\
%    &\le L_f \cdot \frac{C_f \sigma}{\mu} + \left( L_f \sigma + \frac{\rho_g C_f \sigma}{\mu} \right) \cdot \frac{C_f}{\mu} \\
%    &\quad + \frac{L_g}{\mu} \cdot \left( L_f \sigma + \frac{\rho_g C_f \sigma}{\mu} \right) \cdot \frac{C_f}{\mu}  \\
%    &\quad + \frac{L_g}{\mu} \cdot \frac{L_f C_f \sigma}{\mu} = \fO(\sigma).
% \end{align*}
\begin{align*}
     \Vert \nabla \varphi_{\sigma'}(x) - H(x) \Vert \le \frac{\sigma C_f}{\mu} \left( 1+ \frac{L_g}{\mu} \right)
     \left(2 L_f + \frac{\rho_g C_f}{\mu} \right).
\end{align*}
Taking $\sigma' \rightarrow 0^+ $, we conclude
\begin{align*}
    \nabla \varphi(x) = \lim_{\sigma' \rightarrow 0^+ } \nabla \varphi_{\sigma'}(x) = H(x).
\end{align*}

\end{proof}
\lemgradLip*
\begin{proof}
% Note that by the setting of $\sigma$, $h_{\sigma}$ is $(2L_g)$-gradient Lipschitz and $(2 \rho_g)$-Hessian Lipschitz.
Invoking Lemma \ref{lem:set-Lip}, there exists $y_1 \in Y^*(x_1)$ and $y_2 \in Y^*(x_2)$ such that $\Vert y_1 - y_2 \Vert \le L_g / \mu$. Then by Lemma \ref{lem:space} and Lemma \ref{lem:UV}, we have
\begin{align*}
    &\quad \Vert \nabla \varphi(x_1) - \nabla \varphi(x_2) \Vert \\
    &\le \Vert \nabla_x f(x_1,y_1) - \nabla_x f(x_2,y_2)) \Vert \\
    &\quad +   \left \Vert \left( \nabla_{xy}^2 g (x_1,y_1) - \nabla_{xy}^2 g (x_2,y_2)\right) \left( \nabla_{yy}^2 g(x_1,y_1) \right)^{\dagger} \nabla_y f(x_1,y_1)) \right \Vert \\
    &\quad +  \Big \Vert \nabla_{xy}^2 g (x_2,y_2) \left( \nabla_{yy}^2 g (x_2,y_2) \right)^{\dagger} \\
    &~~~~~~~~~~~~~~~~~~~~~~~~~
    \left( \nabla_{yy}^2 g (x_2,y_2) - \nabla_{yy}^2 g (x_1,y_1) \right) \left( \nabla_{yy}^2 g (x_1,y_1) \right)^{\dagger}
    \nabla_y f(x_1,y_1) 
    \Big \Vert \\
    &\quad + \left \Vert \nabla_{xy}^2 g(x_2,y_2)  \left(  \nabla_{yy}^2 g (x_2,y_2) \right)^{\dagger} 
    \left( \nabla_y f(x_2,y_2) - \nabla_y f(x_1,y_1)  \right)
    \right \Vert.
\end{align*}
Further invoking Lemma \ref{lem:PL-singular},
\begin{align*}
    \Vert \nabla \varphi(x_1) - \nabla \varphi(x_2) \Vert \le \left( L_f + \frac{C_f \rho_g}{\mu} \right) \left( 1 + \frac{L_g}{\mu} \right) \left(1 + \frac{L_g}{\mu} \right)  \Vert x_1 - x_2 \Vert.
\end{align*}
\end{proof}

The following lemma shows linear-convergence of gradient descent on PL functions.
\begin{lem} \label{lem:PL-GD}
Suppose $h(x): \BR^d \rightarrow \BR$ is $\alpha$-PL and has $\beta$-Lipschitz gradients. Consider the following update of gradient descent:
\begin{align*}
    x_{t+1} = x_t - \frac{1}{\beta} \nabla h(x_t).
\end{align*}
Let $X^* = \arg \min_{x \in \BR^d} h(x)$ and $h^* = \min_{x \in \BR^d} h(x)$. Then it holds that
\begin{align*}
    {\rm dist}^2 (x_T,X^*) \le \left(1 - \frac{\alpha}{\beta} \right)^T \frac{\beta}{\alpha} {\rm dist}^2 (x_0,X^*).
\end{align*}
\end{lem}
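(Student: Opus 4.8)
The plan is to run the standard ``descent lemma $+$ PL'' argument to get linear convergence of the \emph{function gap} $h(x_t)-h^*$, and then translate this into a bound on ${\rm dist}^2(x_t,X^*)$ using quadratic growth (Lemma \ref{lem:PL-QG}) on one side and $\beta$-smoothness on the other. First I would invoke the descent lemma implied by $\beta$-Lipschitz gradients: for the stepsize $1/\beta$ one has
\begin{align*}
h(x_{t+1}) \le h(x_t) - \frac{1}{\beta}\Vert \nabla h(x_t)\Vert^2 + \frac{1}{2\beta}\Vert \nabla h(x_t)\Vert^2 = h(x_t) - \frac{1}{2\beta}\Vert \nabla h(x_t)\Vert^2.
\end{align*}
Combining this with the $\alpha$-PL inequality $\Vert \nabla h(x_t)\Vert^2 \ge 2\alpha(h(x_t)-h^*)$ and subtracting $h^*$ yields the one-step contraction
\begin{align*}
h(x_{t+1}) - h^* \le \Big(1 - \frac{\alpha}{\beta}\Big)\big(h(x_t) - h^*\big),
\end{align*}
and iterating gives $h(x_T) - h^* \le (1-\alpha/\beta)^T\,(h(x_0)-h^*)$.

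Next I would convert both ends. For the terminal iterate, Lemma \ref{lem:PL-QG} (quadratic growth, a consequence of the PL condition) gives $h(x_T)-h^* \ge \frac{\alpha}{2}{\rm dist}^2(x_T,X^*)$, hence ${\rm dist}^2(x_T,X^*) \le \frac{2}{\alpha}(h(x_T)-h^*)$. For the initial iterate, pick $\hat x_0 \in \arg\min_{x\in X^*}\Vert x_0 - x\Vert$; since $\nabla h(\hat x_0)=0$ at a minimizer, the $\beta$-smoothness upper bound reads
\begin{align*}
h(x_0) \le h(\hat x_0) + \langle \nabla h(\hat x_0), x_0 - \hat x_0\rangle + \frac{\beta}{2}\Vert x_0 - \hat x_0\Vert^2 = h^* + \frac{\beta}{2}{\rm dist}^2(x_0,X^*).
\end{align*}
Chaining the three displays yields
\begin{align*}
{\rm dist}^2(x_T,X^*) \le \frac{2}{\alpha}\Big(1-\frac{\alpha}{\beta}\Big)^T(h(x_0)-h^*) \le \frac{\beta}{\alpha}\Big(1-\frac{\alpha}{\beta}\Big)^T{\rm dist}^2(x_0,X^*),
\end{align*}
which is exactly the claim.

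There is no serious obstacle here; the argument is a textbook PL analysis. The only points requiring a little care are (i) making sure the quadratic-growth direction uses Lemma \ref{lem:PL-QG} and the function-value direction uses smoothness at a minimizer (so the gradient term vanishes), and (ii) that the PL condition guarantees $X^*$ is nonempty so that $\hat x_0$ and all the distances are well defined; both are already in force under the hypotheses. One may also note $1-\alpha/\beta \in [0,1)$ since $\alpha \le \beta$ for any $\alpha$-PL, $\beta$-smooth function, so the bound is genuinely contracting.
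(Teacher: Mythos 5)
Your proposal is correct and mirrors the paper's own argument exactly: a descent-lemma-plus-PL contraction on the optimality gap, followed by the two-sided conversion between $h(x)-h^*$ and ${\rm dist}^2(x,X^*)$ via quadratic growth (Lemma~\ref{lem:PL-QG}) on one side and $\beta$-smoothness at a minimizer on the other. The only cosmetic difference is that you spell out the two conversion inequalities explicitly, while the paper bundles them into \Eqref{eq:note-QG}.
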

\begin{proof} We first prove the linear convergence on the sub-optimality gap.
\begin{align*}
    h(x_{t+1}) - h^* &\le h(x_t) - h^* + \nabla h(x_t)^\top (x_{t+1} - x_t) + \frac{\beta}{2} \Vert x_{t+1} - x_t \Vert^2 \\
    &= h(x_t) - h^* - \frac{1}{2 \beta} \Vert \nabla h(x_t) \Vert^2 \\
    &\le \left( 1 - \frac{\alpha}{\beta} \right) (h(x_t) - h^*).
\end{align*}
    Telescope over $t = 0,\cdots, T-1$
    \begin{align*}
        h(x_{T}) - h^* \le \left( 1 - \frac{\alpha}{\beta}\right)^T (h(x_0) - h^*).
    \end{align*}
We complete the proof by noting that
\begin{align} \label{eq:note-QG}
    h(x) - h^* \le \frac{\beta}{2} {\rm dist}^2(x,X^*) \quad {\rm and} \quad h(x)  - h^* \ge \frac{\alpha}{2} {\rm dist}^2(x,X^*).
\end{align}
\end{proof}

Then we can easily show that $\nabla \varphi_{\sigma}(x)$ can be efficiently approximated in logarithmic time. Combining both the outer and inner iterations yields the following result.

\ThmYES* 

\begin{proof}
Let $L$ be the gradient Lipschitz constant of $\varphi(x)$. Let $\eta \le 1/(2L)$, then
\begin{align} \label{eq:des}
\begin{split}
    \varphi(x_{t+1}) &\le \varphi(x_t) + \langle \nabla \varphi(x_t) , x_{t+1} - x_t
    \rangle + \frac{L}{2} \Vert x_{t+1 } - x_t \Vert^2 \\
    &= \varphi(x_t) -
    \frac{\eta}{2} \Vert \nabla \varphi(x_t) \Vert^2 -
    \left( \frac{\eta}{2} - \frac{\eta^2 L}{2} \right) \Vert \hat \nabla \varphi(x_t) \Vert^2 + \frac{\eta}{2} \Vert \hat  \nabla \varphi(x_t) - \nabla \varphi(x_t) \Vert^2 \\
    &\le \varphi(x_t)
    - \frac{\eta}{2} \Vert \nabla \varphi(x_t) \Vert^2 - 
    \frac{1}{4\eta} \Vert x_{t+1} - x_{t} \Vert^2
    + \frac{\eta}{2} \Vert  \hat \nabla \varphi(x_t) - \nabla \varphi_{\sigma}(x_t) \Vert^2 +\fO(\eta \epsilon^2).
\end{split}
\end{align}
% Let $L$ be the gradient Lipschitz constant of $\varphi_{\sigma}(x)$.
% According to the setting of $\sigma$, we have
% \begin{enumerate}[label=\alph*.]
%     \item $\sup_{x \in \BR^{d_x}} \Vert \nabla \varphi_{\sigma}(x) - \nabla \varphi(x) \Vert = \fO(\epsilon)$. 
%     \item $\varphi_{\sigma}(x_0) - \inf_{x \in \BR^d}\varphi_{\sigma}(x) = \fO(\Delta)$.
%     \item $ L:=\sup_{x \in \BR^{d_x}}\Vert \nabla^2 \fL_{\lambda}^*(x) \Vert = \fO(\ell \kappa^3)$.
% \end{enumerate}
% Therefore, it suffices to show that the algorithm can find an $\epsilon$-stationary point of $\varphi_{\sigma}(x)$. 
Note that
\begin{align}  \label{eq:des-dist}
    \Vert \hat \nabla \varphi(x_t) - \nabla \varphi_{\sigma}(x_t) \Vert \le \frac{2  L_g}{\sigma} {\rm dist}( y_{t}^K, Y_{\sigma}^*(x_t))  + \frac{L_g}{\sigma} {\rm dist}(z_{t}^K, Y^*(x_t)).
\end{align}
Then by Lemma \ref{lem:PL-GD}, we have
\begin{align} \label{eq:err}
    \Vert \hat \nabla \varphi (x_t)- \nabla \varphi_{\sigma}(x_t) \Vert^2 \le \frac{8 L_g^3}{\mu \sigma^2 } \exp\left( - \frac{\mu K}{2 L_g} \right) \left( {\rm dist}^2 (y_t^K, Y_{\sigma}^*(x_t)) 
    + {\rm dist}^2 (z_t^K, Y^*(x_t))
    \right).
\end{align}
By Young's inequality and Lemma \ref{lem:set-Lip},  
\begin{align*}
    {\rm dist}^2\left(y_{t+1}^0, Y_{\sigma}^*(x_{t+1})\right) & \le 2 {\rm dist}^2\left( y_t^K, Y_{\sigma}^*(x_{t})\right)  + 2 {\rm dist}^2 \left(Y_{\sigma}^*(x_{t+1}) , Y_{\sigma}^*(x_t)\right)  \\
    &\le \frac{4 L_g}{\mu} \exp \left( - \frac{\mu K}{2 L_g} \right) {\rm dist}^2 \left( y_t^0, Y_{\sigma}^*(x_t) \right)  + \frac{8 L_g^2}{\mu^2} \Vert x_{t+1} - x_t \Vert^2,
\end{align*}
Similarly, we can derive the recursion about ${\rm dist}^2 \left( z_t^0, Y^*(x_t) \right)$. 

Put them together and let
\begin{align*}
    K \ge  \frac{2 L_g}{\mu} \log \left( \frac{8 L_g}{\mu} \right),
\end{align*}
we have
\begin{align*}
    \delta_{t+1} 
    &\le \frac{1}{2} \delta_t +\frac{16 L_g^2}{\mu^2} \Vert x_{t+1} - x_t \Vert^2,
\end{align*}
where we define $\delta_t := {\rm dist}^2 \left(y_{t}^0,  Y_{\sigma}^*(x_{t}) \right)  + {\rm dist}^2 \left(z_{t}^0, Y^*(x_{t}) \right)$. Telescoping over $t$ yields
\begin{align*}
    \delta_t
    &\le \underbrace{\left( \frac{1}{2} \right)^t \delta_0 + \frac{16 L_g^2}{\mu^2} \sum_{j=0}^{t-1}  \left( \frac{1}{2}\right)^{t-1-j} \Vert x_{j+1} - x_j \Vert^2}_{:=(*)}.
\end{align*}
Plug into \Eqref{eq:err}, which, in conjunction with \Eqref{eq:des}, yields that
\begin{align*}
     \varphi(x_{t+1}) &\le \varphi(x_t) 
    - \frac{\eta}{2} \Vert \nabla \varphi(x_t) \Vert^2 - 
    \frac{1}{4\eta} \Vert x_{t+1} - x_{t} \Vert^2 + 4 \eta \times \underbrace{ \frac{L_g^3}{\mu \sigma^2} \exp \left( - \frac{\mu K}{2L_g}\right)}_{:=\gamma} \times (*) + \fO(\eta \epsilon^2).
\end{align*}
Telescoping over $t$ further yields
\begin{align} \label{eq:final}
\begin{split}
    \frac{\eta}{2}\sum_{t=0}^{T-1} \Vert  \nabla \varphi(x_t) \Vert^2 &\le \varphi(x_0) - \inf_{x \in \BR^{d_x}} \varphi(x) + 8 \eta \gamma \delta_0 \\
    &\quad - \left( \frac{1}{4 \eta} - \frac{148 \eta \gamma L_g^2}{\mu^2} \right) \sum_{t=0}^{T-1} \Vert x_{t+1} - x_t \Vert^2 +\fO(\eta \epsilon^2).
\end{split}
\end{align}

Let $K = \fO(\kappa \log( \nicefrac{\kappa}{\sigma})) = \fO(\kappa \log(\nicefrac{\ell \kappa}{\epsilon}))$ such that $\gamma$ is sufficiently small with
\begin{align*}
    \gamma \le \min \left\{ \frac{\mu^2}{1184 \eta^2 L_g^2}, \frac{1}{8 \eta}\right\}.
\end{align*}
Then we have,
\begin{align*}
    \frac{1}{T} \sum_{t=0}^{T-1} \Vert \nabla \varphi(x_t) \Vert^2 \le  \frac{2}{\eta T} \left(\Delta + \delta_0 \right) + \fO(\epsilon^2).
\end{align*}
By Lemma \ref{lem:set-Lip} we know that for any 
\begin{align*}
    {\rm dist}^2 (y_0, Y_{\sigma}^*(x)) \le 2  {\rm dist}^2 (y_0, Y^*(x)) + 2  {\rm dist}^2 \left(Y_{\sigma}^*(x), Y^*(x)\right)= \fO(R). 
\end{align*}
Hence $\delta_0=R$ is also bounded.
This concludes the proof.
    
\end{proof}

\section{Proof of Theorem \ref{thm:stoc-F2BA}}

\begin{lem} \label{lem:PL-SGD}
Suppose $h(x): \BR^d \rightarrow \BR$ is  $\alpha$-PL and has $\beta$-Lipschitz gradients. Consider the following update of stochastic gradient descent:
\begin{align*}
    x_{t+1} = x_t - \frac{1}{\beta} \nabla h(x_t;\fB_t),
\end{align*}
where the mini-batch gradient satisfies 
\begin{align*}
    \BE_{\fB_t} \left[ \nabla h(x_t;\fB_t) \right] = \nabla h(x_t) , \quad \BE_{\fB_t} \Vert \nabla h(x_t;\fB_t) - \nabla h(x_t) \Vert^2 \le \frac{M^2}{B}.
\end{align*}
Let $X^* = \arg \min_{x \in \BR^d} h(x)$ and $h^* = \min_{x \in \BR^d} h(x)$. Then it holds that
\begin{align*}
    \BE \left[{\rm dist}^2 (x_T,X^*)\right] \le \left(1 - \frac{\alpha}{\beta} \right)^T \frac{\beta}{\alpha} {\rm dist}^2 (x_0,X^*) + \frac{M^2}{\alpha^2 B} . 
\end{align*}
\end{lem}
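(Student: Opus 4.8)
The plan is to replay the deterministic argument of Lemma~\ref{lem:PL-GD}, carrying the extra noise term through the descent inequality. Write $g_t := \nabla h(x_t;\fB_t)$ and $\xi_t := g_t - \nabla h(x_t)$, so that $\BE[\xi_t \mid x_t] = 0$ and $\BE[\|\xi_t\|^2 \mid x_t] \le M^2/B$. Applying the $\beta$-smoothness descent lemma to the update $x_{t+1} = x_t - \frac1\beta g_t$ gives
$$h(x_{t+1}) \le h(x_t) - \frac1\beta \langle \nabla h(x_t), g_t\rangle + \frac{1}{2\beta}\|g_t\|^2.$$
Taking expectation conditional on $x_t$, the terms linear in $\xi_t$ cancel and the quadratic term splits into $\|\nabla h(x_t)\|^2$ plus the variance, so
$$\BE[h(x_{t+1}) \mid x_t] \le h(x_t) - \frac{1}{2\beta}\|\nabla h(x_t)\|^2 + \frac{M^2}{2\beta B}.$$
This is the only place the independence of $\fB_t$ from the past iterates is used.

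Next I would invoke the $\alpha$-PL inequality $\|\nabla h(x_t)\|^2 \ge 2\alpha\,(h(x_t) - h^*)$ to obtain the one-step contraction
$$\BE[h(x_{t+1}) \mid x_t] - h^* \le \Big(1 - \frac\alpha\beta\Big)\big(h(x_t) - h^*\big) + \frac{M^2}{2\beta B},$$
then take total expectation and unroll over $t = 0, \dots, T-1$, bounding the geometric series $\sum_{j\ge 0}(1-\alpha/\beta)^j \le \beta/\alpha$ for the accumulated noise. This yields $\BE[h(x_T)] - h^* \le (1-\alpha/\beta)^T (h(x_0) - h^*) + \fO(M^2/(\alpha B))$. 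Finally I would translate back from function values to distance using the two-sided estimates already recorded in \Eqref{eq:note-QG}: the quadratic-growth consequence of PL gives $h(x_T) - h^* \ge \frac\alpha2 {\rm dist}^2(x_T, X^*)$, while $\beta$-smoothness gives $h(x_0) - h^* \le \frac\beta2 {\rm dist}^2(x_0, X^*)$; substituting and dividing through by $\alpha/2$ produces the asserted bound, with the precise coefficient of the $M^2/B$ floor being a matter of routine constant bookkeeping.

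I do not expect a genuine obstacle here: this is essentially the textbook SGD-under-PL estimate (cf.\ \citet{karimi2016linear}), and it specializes to Lemma~\ref{lem:PL-GD} when $M = 0$. The one point that deserves care is that the PL condition alone does not contract ${\rm dist}^2(x_t, X^*)$ from one iterate to the next, so one cannot iterate on the distance directly; the argument must pass through the suboptimality gap $h(x_t) - h^*$ and only convert back to distance at the very end, which is exactly what couples the transient term to the conditioning ratio $\beta/\alpha$.
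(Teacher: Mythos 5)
Your proof is correct and essentially mirrors the paper's argument: the same descent lemma with cross terms cancelling under conditional expectation, the same PL contraction on the suboptimality gap, the same telescoping, and the same final conversion from function-value gap to distance via the quadratic-growth and smoothness estimates in \Eqref{eq:note-QG}. Your more careful handling of the geometric series, $\sum_{j}(1-\alpha/\beta)^j \le \beta/\alpha$, in fact exposes that the noise floor one actually obtains is $M^2/(\alpha^2 B)$ rather than the stated $M^2/(\alpha\beta B)$; the paper's proof silently drops the $\beta/\alpha$ factor from the geometric sum in the telescoping step, a minor arithmetic slip that is harmless for the downstream complexity results since only the $B$-scaling matters there.
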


\begin{proof}
    The proof is similar to the deterministic case. Conditional on $x_t$, we have
    \begin{align*}
        \BE \left[ h(x_{t+1}) - h^*\right] &\le \BE \left[ h(x_t) - h^* + \nabla h(x_t)^\top (x_{t+1} - x_t) + \frac{\beta}{2} \Vert x_{t+1} - x_t \Vert^2 \right]\\
        &= \BE \left[ h(x_t) - h^* - \frac{1}{2 \beta} \Vert \nabla h(x_t) \Vert^2 + \frac{1}{2 \beta} \Vert \nabla h(x_t; \fB_t) - \nabla h(x_t) \Vert^2 \right] \\
        &\le \left( 1 - \frac{\alpha}{\beta}\right) \left( h(x_t) - h^* \right) + \frac{M^2}{2 \beta B}.
    \end{align*}
Telescope,
\begin{align*}
    \BE \left[ h(x_{t+1}) - h^*\right] \le \left(1 - \frac{\alpha}{\beta} \right)^T  \left( h(x_0) - h^* \right)  + \frac{M^2}{2 \alpha B}.
\end{align*}
We conclude the proof by using \Eqref{eq:note-QG}.

\end{proof}

\ThmStoc*

\begin{proof}
Define $\delta_t := \BE \left[ {\rm dist}^2 \left(y_{t}^0,  Y_{\sigma}^*(x_{t}) \right)  + {\rm dist}^2 \left(z_{t}^0, Y^*(x_{t}) \right) \right]$.
 By  \Eqref{eq:des-dist}, letting
 \begin{align*}
 {\rm dist}^2\left( y_t^K, Y_{\sigma}^*(x_{t})\right) +  {\rm dist}^2\left( z_t^K, Y^*(x_{t})\right)
  \le  \fO\left(\frac{\sigma^2 \epsilon^2}{ L_g^2} \right)      
 \end{align*}
ensures $ \BE \Vert \hat \nabla \varphi(x_t) - \nabla \varphi_{\sigma}(x_t) \Vert^2 \le \fO(\epsilon)$.
Then telescoping over \Eqref{eq:des}  shows that one  can find an $\epsilon$-stationary point of $\varphi(x)$ with $T = \fO(\epsilon^{-2})$ outer-loop iterations.
By Lemma \ref{lem:PL-SGD}, it suffices to set the parameters in the inner loop as \Eqref{eq:para-stoc-F2BA}.
But $K_t$ requires the knowledge of $\delta_t$.
Next, we bound $\delta_t$ via a recursion which allows us to get rid of the prior knowledge of $\delta_t$.
By Lemma \ref{lem:PL-SGD} and Lemma \ref{lem:set-Lip},  
\begin{align*}
 &\quad {\rm dist}^2\left(y_{t+1}^0, Y_{\sigma}^*(x_{t+1})\right) \\
    & \le 2 {\rm dist}^2\left( y_t^K, Y_{\sigma}^*(x_{t})\right)  + 2 {\rm dist}^2 \left(Y_{\sigma}^*(x_{t+1}) , Y_{\sigma}^*(x_t)\right)  \\
    &\le \frac{4 L_g}{\mu} \exp \left( - \frac{\mu K_t}{2 L_g} \right) {\rm dist}^2 \left( y_t^0, Y_{\sigma}^*(x_t) \right) + \frac{8 L_g^2}{\mu^2} \Vert x_{t+1} - x_t \Vert^2 + \fO \left( \frac{\sigma^2 \epsilon^2}{L_g^2} \right).
\end{align*}
Similarly, we can derive the recursion about ${\rm dist}^2 \left( z_t^0, Y^*(x_t) \right)$. 
Put them together and let
\begin{align*}
    K_t \ge  \frac{2 L_g}{\mu} \log \left( \frac{8 L_g}{\mu} \right),
\end{align*}
we can get \Eqref{eq:delta_t}.
Telescoping over $t$,
\begin{align*}
    \sum_{t=0}^{T-1} \delta_t \le 2 \delta_0 + \frac{32 L_g^2}{\mu^2} \sum_{t=0}^{T-1} \Vert x_{t+1} - x_t \Vert^2 + \fO \left( \frac{\sigma^2 \epsilon^2}{L_g^2} \right),
\end{align*}
where on the right-hand side we know $ \sum_{t=0}^{T-1} \Vert x_{t+1} - x_t \Vert$ must also be bounded by  \Eqref{eq:des}.

% Putting them together, we can get \Eqref{eq:delta_t}.
% Telescoping over $t$,
% \begin{align*}
%     \sum_{t=0}^{T-1} \delta_t \le 2 \delta_0 + \frac{32 L_g^2}{\mu^2} \sum_{t=0}^{T-1} \Vert x_{t+1} - x_t \Vert^2 + \fO \left( \frac{\sigma^2 \epsilon^2}{L_g^2} \right).
% \end{align*}
With this recursion, the total iterations can be bounded above by
\begin{align*} 
\sum_{t=0}^{T-1} K_t \le \sum_{t=0}^{T-1} \frac{2 L_g}{\mu} \log \left( \frac{32 L_g^3 \delta_t}{\mu \sigma^2 \epsilon^2} \right)   \le \frac{2 L_g T}{\mu} \log \left( \frac{ 32 L_g^3\sum_{t=0}^{T-1}\delta_t}{\mu \sigma^2 \epsilon^2 T} \right).
\end{align*}
And the total number of stochastic oracle calls is $B$ times the above bound for iterations.
\end{proof}

\end{document}